\numberwithin{equation}{section}
\newcommand{\R}{{\mathbb R}}
\newcommand{\C}{{\mathbb C}}
\newcommand{\be}{\begin{eqnarray}}
\newcommand{\ben}{\begin{eqnarray*}}
\newcommand{\en}{\end{eqnarray}}
\newcommand{\enn}{\end{eqnarray*}}
\newcommand{\pa}{\partial}
\newcommand{\ov}{\overline}
\newtheorem{theorem}{Theorem}[section]
\newtheorem{lemma}[theorem]{Lemma}
\newtheorem{definition}[theorem]{Definition}
\newtheorem{remark}[theorem]{Remark}
\newtheorem{condition}[theorem]{Condition}
\definecolor{r1}{rgb}{0.000,0.000,0.000}
\begin{document}
\renewcommand{\theequation}{\arabic{section}.\arabic{equation}}
\begin{titlepage}
  \title{Multiple-scattering frequency-time hybrid solver for the\\
    wave equation in interior domains}

\author{Oscar P. Bruno\thanks{Department of Computing \& Mathematical Sciences, California Institute of Technology, 1200 East California Blvd., CA 91125, United States. Email:{\tt obruno@caltech.edu}}\;
and Tao Yin\thanks{LSEC, Institute of Computational Mathematics and Scientific/Engineering Computing, Academy of Mathematics and Systems Science, Chinese Academy of Sciences, Beijing 100190, China. Email:{\tt yintao@lsec.cc.ac.cn}}}
\end{titlepage}
\maketitle
%
\begin{abstract}
  This paper proposes a frequency-time hybrid solver for the
  time-dependent wave equation in two-dimensional {\em interior
    spatial domains}. The approach relies on four main elements,
  namely, 1)~A multiple scattering strategy that decomposes a given
  \textcolor{r1}{{\it interior}} time-domain problem into a sequence of {\em limited-duration}
  time-domain problems of scattering by overlapping \textcolor{r1}{open arcs}, each
  one of which is reduced (by means of the Fourier transform) to a
  sequence of {\em Helmholtz frequency-domain problems};
  2)~Boundary integral equations on overlapping boundary patches for
  the solution of the frequency-domain problems in point~1); 3)~A
  smooth {\em ``Time-windowing and recentering''} methodology that
  enables both treatment of incident signals of long duration and long
  time simulation; and, 4)~A Fourier transform algorithm that delivers
  numerically dispersionless, {\em spectrally-accurate time evolution}
  for given incident fields. By recasting the interior time-domain
  problem in terms of a sequence of open-arc multiple scattering
  events, the proposed approach regularizes the full interior
  frequency domain problem---which, if obtained by either Fourier or
  Laplace transformation of the corresponding interior time-domain
  problem, must encapsulate infinitely many scattering events, giving
  rise to non-uniqueness and eigenfunctions in the Fourier case, and
  ill conditioning in the Laplace case. Numerical examples are included which demonstrate the accuracy and  efficiency of the proposed methodology.
\end{abstract}
{\bf Keywords:} wave equation \textcolor{r1}{in interior domains}, multiple scattering, Fourier transform, integral equation\\
{\bf MSC:} 35L05, 65M80, 65T99, 65R20

\section{Introduction}
\label{sec:1}

The numerical solution of the classical scalar second-order wave
equation remains a challenging problem, with significant impact,
directly and indirectly, on the simulation of propagation and
scattering of time dependent acoustic, elastic and electromagnetic
waves. Methods often utilized in both the literature and applications,
such as the finite difference method~\cite{T00}, the finite element \textcolor{r1}{(FE)}
method~\cite{FP96,LLC97,YBL20} and the discontinuous Galerkin \textcolor{r1}{(DG)}
method~\cite{GSS06,XCS13}, rely on use of volumetric
discretizations of the spatial domain in conjunction with appropriate
time-stepping discretization methods\textcolor{r1}{; recent related
  contributions include the unconditionally stable space-time FE/DG
  methods~\cite{BMPS21,LSZ21} which can avoid use of fine temporal meshes even in the common situations in which fine spatial meshes are required for resolution of challenging geometric features. Volumetric} discretization
approaches can treat problems in general geometries and including
spatially varying media. As is well known, however, \textcolor{r1}{such methods
often} suffer from spatial and temporal numerical dispersion errors
(also known as pollution errors~\cite{BS97,LZZ20}), and they therefore
require use of fine spatial and temporal meshes---and thus, large
computer-memory and run-times---to achieve accurate solutions in
applications involving high frequencies and/or long time simulations.

The time-domain boundary integral equation method (TDBIE) for the wave
equation, which, based on use of the retarded-potential Green's
function, only requires discretization of lower-dimensional domain
boundary, has attracted attention
\textcolor{r1}{recently~\cite{ADG11,Yilmaz:04,BGH20,D03,Betcke:17,SU22,SUZ21}}. This
method requires treatment of the Dirac delta function, and it
therefore leads to integration domains given by the intersection of
the light cone with the overall scattering surface. As a result, the
schemes resulting from the discretization of the TDBIE are generally
complex, and, additionally, they have presented challenges concerning
numerical stability~\cite{BGH20}.  The ``Convolution Quadrature''
method~\cite{BK14,L94,S16,Betcke:17} (CQ), in turn, relies on the
combination of a finite-difference time discretization and a
Laplace-like transformation to reduce the time-domain problem to
modified Helmholtz problems over a range of frequencies. The Helmholtz
problems in the CQ context are tackled by means of frequency-domain
integral equations, and, thus, the CQ method effectively eliminates
spatial dispersion. The solution method inherits the dispersive
character of the finite-difference approximation that underlies the
time-domain scheme, however. A certain ``infinite tail'' in the CQ
time history that results from ``the passage through the Laplace
domain'' also presents ``a serious
disadvantage''~\cite[Chapter~5.1]{S16}.

A frequency-time hybrid solver has recently been
proposed~\cite{ABL18}, in which the time evolution is evaluated by
means of a certain ``windowing and time-recentering'' procedure. The
algorithm presented in that reference simply decomposes the incident
time signal as a sum of a sequence of smooth compactly supported
incident ``wave packets''. Using Fourier transformation in time, the
solution for each one of the wave packets is expressed in terms of
regular-Helmholtz frequency-domain solutions---thus eliminating
spatial dispersion, just like the CQ method. The ``recentering''
strategy then allows for use of a fixed set of frequency-domain
solutions for arbitrarily long times. A tracking strategy is used to
determine the time interval during which the solution associated with
each wave packet must be kept as part of the simulation. An efficient
implementation of the required Fourier transformation processes is
introduced in~\cite{ABL18} which includes specialized high-frequency
algorithms, including, e.g.  ``time re-centering'' of the wave as well
as Chebyshev and \textcolor{r1}{Fourier-Continuation} Fourier transform
representations.  Unlike other approaches, the hybrid
method~\cite{ABL18} \textcolor{r1}{can provide highly accurate
  numerical solutions for problems involving complex scatterers for
  incident fields applied over long periods of time}. The method
allows for time leaping, parallel-in-time implementation and,
importantly, spectral accuracy in time. The CQ approach and other
hybrid methods~\cite{DSSB93,MMRZ00}, in contrast, have only provided
solutions for incident signals of very brief time duration, as
indicated in the various comparisons with other methods provided
in~\cite{ABL18}.

The present paper proposes an extension of the time-domain
method~\cite{ABL18} to problems posed in interior physical domains. An
immediate challenge arises as such a program is contemplated, namely,
that the interior-domain Helmholtz equation is not uniquely solvable
at any frequency whose negative square is an eigenvalue of the Laplace
operator. This problem does not arise if the CQ method is used
instead: the resulting modified Helmholtz problems are uniquely
solvable for all Laplace frequencies. In order to avoid the
aforementioned time-dispersion and infinite tail difficulties inherent
in the CQ method, however, the present paper retains the use of the
Fourier transform, and it re-expresses the full time-evolution as a
problem of multiple scattering among various portions of the domain
boundary. Thus, taking into account the wave's finite speed of
propagation, the original domain boundary is decomposed into a number
$N_\mathrm{arc}$ of {\em overlapping} open-arcs, each one of which
gives rise to a corresponding scattering problem, in absence of all
other arcs in the decomposition. \textcolor{r1}{For simplicity, this paper restricts
attention to the case $N_\mathrm{arc} = 2$, but a numerical
illustration is presented for a simple $N_\mathrm{arc} > 2$
case. (Complete details concerning the algorithm and its
implementation for arbitrary $N_\mathrm{arc} \geq 2$ will be presented
elsewhere~\cite{BBY}.)} In view of \textcolor{r1}{Theorems~\ref{equivalence} and \ref{equivalence11}} below (see
also Algorithms~\ref{alg0}-\ref{alg2}), by appropriately accounting
for multiple scattering, solutions for such open-arc problems can be
combined into a full solution, which is mathematically exact and
numerically accurate, for the given interior domain
problem. Crucially, the frequency-domain open-arc scattering problems
that result upon Fourier transformation are uniquely solvable.

Solution via e.g. a Laplace transformation in time,
  in contrast, while also eliminating the difficulties arising from
  the existence of interior-eigenvalues (and associated lack of
  existence and uniqueness for the necessary frequency-domain
  problems), entails the instability inherent in numerical inverse
  Laplace transformation. We suggest that this Laplace-transform
  instability reflects precisely the use of frequency-domain solutions
  that incorporate infinitely many multiple scattering events, from
  which the solution up to a given finite time $T$ is then to be
  obtained---somehow eliminating, via high-frequency cancellations,
  all contributions from multiple scattering events beyond time $T$,
  and thereby, in view of such cancellations, incorporating a powerful
  source of ill conditioning at any finite spatio-temporal
  discretization level. Note that each frequency-domain solution in
  the Laplace frequency domain indeed contains infinite-time
  information, as is evidenced by the fact that the same set of
  frequency-domain solutions can theoretically be used to propagate
  the time-domain solution of the wave equation up to arbitrarily long
  times. The proposed multiple-scattering algorithm avoids the
  instability by restricting the number of multiple scattering events
  considered to what is strictly necessary to advance the solution up
  to a given finite time.

In the proposed algorithm the necessary frequency-domain open-arc
scattering problems are obtained by means of a frequency-domain integral
equation solver, as indicated in Algorithm~\ref{alg1}. In view of the
classical regularity theory for open-surface problems \textcolor{r1}{(see~\cite{BL12,CDD03,LB15,SW84,PS88})}, the open arc solutions are singular at the arc endpoints: they
behave like a non-integer power of the distance to the endpoint and, e.g., in the case of Dirichlet boundary conditions considered in this paper, they
tend to infinity as the endpoint is approached. The two-dimensional
version~\cite{BY21} of the Chebyshev-based rectangular-polar
discretization methodology~\cite{BG18}, which incorporates a change of
variables introduced in \cite[Eq. (4.12)]{BY20}, is utilized to
evaluate the corresponding integrals with a high order of accuracy. Together with an
appropriate geometrical description, such as those provided by
engineering NURBS-based models---which include parametrizations
expressed in terms of certain types of Rational B-Splines---the
overlapping-patch boundary-partitioning strategy can be used to tackle
interior wave-equation problems in general three-dimensional
engineering structures. Such extensions of the proposed methods,
however, are not considered in this paper, and are left for future
work.

This paper is organized as follows. Section~\ref{sec:2.1} describes
the wave propagation and scattering problem under
consideration. Section~\ref{sec:2.2} introduces the overlapping-arc
scattering structure, and the time-domain boundary integral equations
for the open-arc time-domain scattering problems. A necessary
Huygens-like domain-of-influence condition is introduced in
Section~\ref{sec:2.3}, which simply states that, as in free space,
waves move along boundaries at the speed of sound. Surprisingly, to
the best of the authors knowledge, such a result has not been
established as yet. A discussion in this regard is presented
in \textcolor{r1}{Section}~\ref{sec:2.3}, including a rigorous proof of validity
\textcolor{r1}{in a simple geometrical context as well as} clear numerical evidence of validity in other cases; the rigorous proof of
validity of this condition for general curves is left for future
work. As a byproduct of the constructions concerning the Huygens
condition, a 2D double-layer time-domain formulation is introduced in
Remark~\ref{double_layer} in Appendix~\ref{sec:A} which bypasses
certain difficulties encountered previously. On the basis of these
materials, Section~\ref{sec:2.4} re-expresses the interior time-domain
problem in terms of a proposed open-arc ``ping-pong''
multiple-scattering approach, and it presents the main theoretical
result of this paper, Theorem~\ref{equivalence}---which establishes
that the interior time-domain problem is indeed equivalent to the
proposed ping-pong problem. Section~\ref{sec:2.5} then re-expresses
the ping-pong problem in terms of associated open-arc frequency-domain
problems, and Section~\ref{sec:2.6} presents the \textcolor{r1}{aforementioned}
windowing and time-recentering strategy that is used to enable the
treatment of problems of arbitrary long time duration. The numerical
implementation of the multiple scattering approach is presented in
Section~\ref{sec:3}, including the windowed Fourier-transform
algorithm~\cite{ABL18} used (Section~\ref{sec:3.1}),
\textcolor{r1}{the methods utilized for the evaluation of singular frequency-domain integral operators, and a novel arc-extension approach that facilitates the avoidance of open-arc endpoint singularities (Section~\ref{sec:3.2}). The} overall computational implementation is outlined in
Section~\ref{sec:3.3}. Numerical examples demonstrating the accuracy
and efficiency of the proposed approach, finally, are presented in
Section~\ref{sec:4}.

\section{Hybrid frequency-time multiple scattering interior solver}
\label{sec:2}

\subsection{Wave equation problem}
\label{sec:2.1}

Let $\Omega\subset\R^2$ denote a bounded domain with piecewise smooth
boundary $\Gamma=\partial\Omega$, and let $u^i(x,t)$ denote a given incident field defined for $t\in\R$ and $x\in \Gamma$, which
vanishes for $t\le 0$. In what follows we consider the wave equation
initial and boundary-value problem
\begin{eqnarray}
\label{waveeqn}
\begin{cases}
\frac{\pa^2u^s}{\pa t^2}(x,t)-c^2\Delta u^s(x,t)=0, & (x,t)\in\Omega\times \textcolor{r1}{\R_+}, \cr
\textcolor{r1}{u^s(x,0)=\frac{\pa u^s}{\pa t}(x,0) =0,} & \textcolor{r1}{x\in\Omega}, \cr
u^s(x,t)=- u^i(x,t),  & (x,t)\in\Gamma\times\textcolor{r1}{\R_+},
\end{cases}
\end{eqnarray}
for the scattered field $u^s(x,t)$ throughout $\Omega$, where the
constant $c>0$ denotes the wave-speed \textcolor{r1}{and $\R_+:=\{t\in\R: t>0\}$. Since
  $u^i(x,t)=0$ for $t<0$, problem (\ref{waveeqn}) can be equivalently written in the form
\begin{eqnarray}
\label{waveeqn1}
\begin{cases}
\frac{\pa^2u^s}{\pa t^2}(x,t)-c^2\Delta u^s(x,t)=0, & (x,t)\in\Omega\times \textcolor{r1}{\R}, \cr
u^s(x,t)=- u^i(x,t),  & (x,t)\in\Gamma\times\textcolor{r1}{\R},
\end{cases}
\end{eqnarray}
by invoking the causality condition $u^s(x,t)=0, x\in\Omega,
t\le0$. Throughout this paper a number of wave-equation problems will
be considered which, assuming vanishing boundary data for $t\leq 0$,
will be expressed in a form similar to (\ref{waveeqn1}), without
explicit mention of vanishing initial conditions at $t=0$.  The
well-posedness of the wave equation problem (\ref{waveeqn}) (and,
equivalently, (\ref{waveeqn1})) in an appropriate Sobolev space is
addressed in Theorem~\ref{welltime} below.}

As indicated in Section~\ref{sec:1}, this paper proposes a fast hybrid
method, related to that presented in~\cite{ABL18}, for the numerical
solution of this problem. As noted in that section, however, the
frequency-domain solutions required by the hybrid method~\cite{ABL18}
fail to exist, in the present interior-domain context, at frequencies
corresponding to Laplace eigenvalues in the domain $\Omega$---and,
thus, the exterior-domain hybrid approach~\cite{ABL18} does not apply
in the present interior-domain \textcolor{r1}{setting}. The hybrid
approach proposed in the present paper relies, instead, on a multiple
scattering strategy that transforms the original wave equation problem
in a bounded domain into a sequence of wave equation problems of
scattering by overlapping {\em open-arcs}---for which the frequency
domain solutions exist at all frequencies, and for which, therefore,
general time-domain solutions can effectively be obtained via the
windowing and recentering Fourier-transform methods introduced
in~\cite{ABL18}. The overlapping-arc scattering structure used as well
as necessary theoretical results concerning open-arc time-domain
scattering problems are presented in the following \textcolor{r1}{sections}.

\begin{figure}[htb]
\centering
\begin{tabular}{ccc}
\includegraphics[scale=0.4]{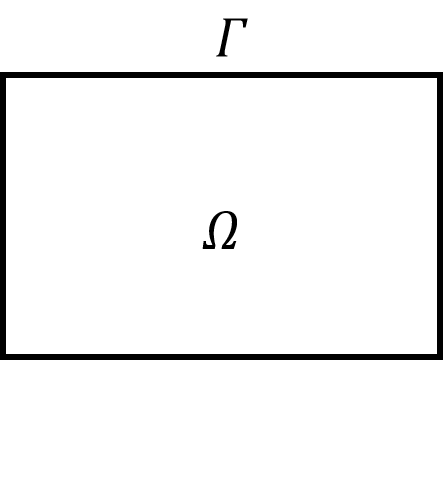} &
\includegraphics[scale=0.4]{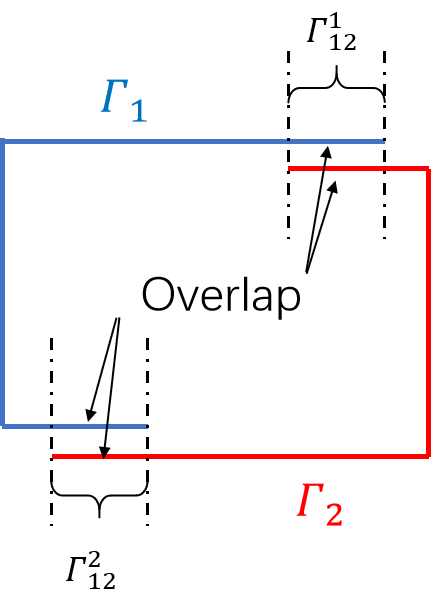} &
\includegraphics[scale=0.4]{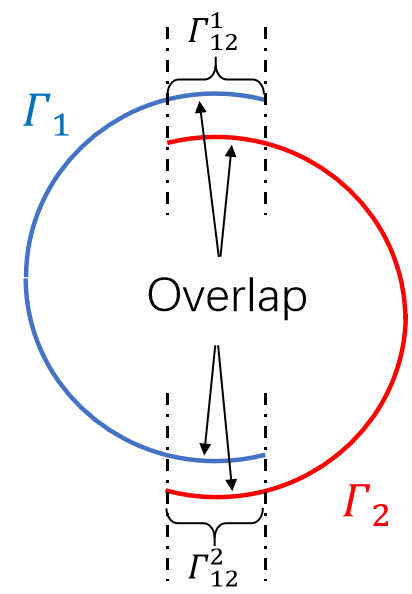} \\
(a) & (b) & (c)
\end{tabular}
\caption{Decomposition of closed boundaries $\Gamma$ into pairs of
  overlapping open arcs. (b) Decomposition of the rectangular boundary $\Gamma$ depicted in (a); (c) Decomposition of a circular closed curve $\Gamma$.}
\label{MSmodel}
\end{figure}

\subsection{Overlapping-arc geometry and time-domain boundary integral equations}
\label{sec:2.2}

The proposed multiple scattering algorithm relies on a boundary
decomposition strategy based on use of overlapping patches. While the
algorithm can utilize an arbitrary numbers of patches, for simplicity,
this paper only considers decompositions consisting of two
patches\textcolor{r1}{, but an algorithm based on an arbitrary number
  of patches can be constructed. (This is illustrated in Section 4 by
  means of a numerical example wherein three patches are used; full
  details concerning the multi-patch algorithm and its implementation
  will be presented elsewhere \cite{BBY}.)} Thus, as illustrated in
Figure~\ref{MSmodel}(b), the scattering surface $\Gamma$ in
Figure~\ref{MSmodel}(a) is covered by two overlapping patches
\textcolor{r1}{$\Gamma_1=\ov{\Gamma}_1\subset\Gamma$ and
  $\Gamma_2=\ov{\Gamma}_2\subset\Gamma$},
$\Gamma=\Gamma_1\cup\Gamma_2 $, whose intersection
$\Gamma_{12}=\Gamma_1\cap\Gamma_2$ equals the disjoint union
$\Gamma_{12}=\Gamma_{12}^{1}\cup\Gamma_{12}^{2}$ of two connected
components $\Gamma_{12}^{1}$ and $\Gamma_{12}^{2}$. (Here the overline
denotes the closure of the corresponding set.) A similar decomposition
is presented in Figure~\ref{MSmodel}(c) for a different \textcolor{r1}{curve}
$\Gamma$. The ``truncation'' of $\Gamma_1$ and $\Gamma_2$ by
$\Gamma_{12}$ results in the truncated arcs
$\Gamma_j^{\rm tr}=\Gamma_j\backslash\Gamma_{12}$, $j=1,2$.  The
distance between the arcs $\Gamma_1^{\rm tr}$ and $\Gamma_2^{\rm tr}$,
which is denoted by
\begin{equation}\label{dist}
  \delta_{12}=\mbox{dist}\{\Gamma_1^{\rm tr},\Gamma_2^{\rm tr}\},
\end{equation}
plays an essential role in our theory and algorithms.

Let us now consider the unbounded domains
$\Omega_j = \R^2\backslash \Gamma_j$ ($j=1,2$) and the corresponding
time-domain problems of scattering by the arcs $\Gamma_j$, which \textcolor{r1}{
underly} the proposed multiple scattering solution strategy. Given an
incident signal $g_j(x,t)$ defined for $x\in\Gamma_j$ ($j=1,2$) and
$t\in\mathbb{R}$, which vanishes for $t\leq 0$, we consider the
following wave equation problem for the function $w_j^s(x,t)$:
\textcolor{r1}{
  \begin{eqnarray}
\label{eq_op}
\begin{cases}
\frac{\pa^2w^s_{j}}{\pa t^2}(x,t)-c^2\Delta
w^s_{j}(x,t)=0, &(x,t)\in\Omega_j\times \R, \cr
w_j^s(x,t)=g_j(x,t),  & (x,t)\in\Gamma_j\times\R.
\end{cases}
\end{eqnarray}
} As is well known~\cite{S16}, the solution $w_j^s(x,t)$ admits the
single-layer representation \begin{eqnarray}
\label{solreptime}
w_j^s(x,t)=\widetilde{\mathcal{S}}_j[\widetilde{\psi}_j](x,t), \quad
x\in\Omega_j, \end{eqnarray} where $\widetilde{\psi}_{j}$ is the solution of the
time-domain integral equation \begin{eqnarray}
\label{BIEtime}
\widetilde{S}_j[\widetilde{\psi}_{j}]=g_{j}\quad\mbox{on}\quad
\Gamma_j.  \end{eqnarray} Here the time-domain single-layer potential
$\widetilde{\mathcal{S}}_j$ is defined by
\begin{eqnarray}
\label{operatorS}
\widetilde{\mathcal{S}}_j[\widetilde{\psi}_{j}](x,t)= \frac{1}{2\pi} \int_{\Gamma_j}\int_0^{t-c^{-1}|x-y|} \frac{\widetilde{\psi}_{j}(y,\tau)} {\sqrt{(t-\tau)^2-c^{-2}|x-y|^2}} d\tau ds_y, \quad \textcolor{r1}{x\in\Omega_j},
\end{eqnarray}
and \textcolor{r1}{the time-domain single-layer boundary integral operator $\widetilde{S}_j:=\gamma_j\widetilde{\mathcal{S}}_j$ is given by}
\begin{equation}\label{operatorS_bdry}
\textcolor{r1}{\widetilde{S}_j[\widetilde{\psi}_{j}](x,t)=\frac{1}{2\pi} \int_{\Gamma_j}\int_0^{t-c^{-1}|x-y|} \frac{\widetilde{\psi}_{j}(y,\tau)} {\sqrt{(t-\tau)^2-c^{-2}|x-y|^2}} d\tau ds_y, \quad x\in\Gamma_j,}
\end{equation}
\textcolor{r1}{where
  $\gamma_j: H_{\sigma,\alpha}^{p}(\R,H^1(\Omega_j))\rightarrow
  H_{\sigma,\alpha}^{p}(\R,H^{1/2}(\Gamma_j))$ denotes the trace
  operator. Here, for given $\sigma>0$ and $\alpha,p\in\R$, and for a
  given Hilbert space $D$, we have used the spatio-temporal Sobolev
  spaces $H_{\sigma,\alpha}^p(\R,D)$ of functions with values in $D$
  which vanish for $t\leq \alpha$. The spaces
  $H_{\sigma,\alpha}^p(\R,D)$ are defined by~\cite{BH86,CHLM10}
\begin{equation}
H_{\sigma,\alpha}^p(\R,D):=\left\{ f\in\mathcal{L}'_{\sigma,\alpha}(D): \int_{-\infty+i\sigma}^{\infty+i\sigma} |s|^{2p}\|\mathcal{L}[f](s)\|_D^2ds<\infty \right\}
\end{equation}
together with the norm
\begin{equation}
\label{norm}
\|f\|_{H_{\sigma,\alpha}^p(\R,D)}:=\left(\int_{-\infty+i\sigma}^{\infty+i\sigma} |s|^{2p}\|\mathcal{L}[f](s)\|_D^2ds \right)^{1/2},
\end{equation}
where $\mathcal{L}[f]$ denotes the Fourier-Laplace transform of $f$ given by
\begin{equation}
\mathcal{L}[f](s):=\int_{-\infty}^\infty f(t)e^{is t}dt,\quad s\in \C_\sigma:=\{\omega\in\C: \mbox{Im}(s)>\sigma>0\},
\end{equation}
and where
$\mathcal{L}'_{\sigma,\alpha}(D):=\{\phi\in\mathcal{D}_\alpha'(D):
e^{-\sigma t}\phi\in \mathcal{S}_\alpha'(D)\}$ is defined in terms of
the sets $\mathcal{D}_\alpha'(D)$ and $\mathcal{S}_\alpha'(D)$ of
$D$-valued distributions and $D$-valued tempered distributions that
vanish for $t\le \alpha$, respectively. We also call \ben
H_{\alpha}^p(\beta,D)=\{f(x,t)|_{t\in(-\infty,\beta]}: f\in
H_{\sigma,\alpha}^p(\R,D)\} \enn the set of all restrictions of
functions $f\in H_{\sigma,\alpha}^p(\R,D)$ to the interval
$-\infty < t\leq \beta$. It can be easily checked that, as suggested
by the notation used, the space $H_{\alpha}^p(\beta,D)$ does not
depend on $\sigma$. This can be verified for integer values of $p$ by
using a norm equivalent to (\ref{norm}) that is expressed in terms of
derivatives with respect to the variable $t$, in conjunction with
smooth and compactly-supported window functions of $t$ which equals one
over the restriction interval $(-\infty, \beta]$ for a given value of
$\beta$. The equivalence for non-integer values of $p$ follows by
interpolation.}

\textcolor{r1}{The well-posedness of the wave
  equation problems (\ref{waveeqn}) and \eqref{eq_op}} is \textcolor{r1}{established in}
the following theorem~\cite{CHLM10,YMC22}.
\begin{theorem}
\label{welltime}
\textcolor{r1}{For given $p\in\R$, $\alpha\ge0$ and for $j=1,2$ we have:
  \begin{itemize}
  \item[(a)] Given $u^i\in H_{\sigma,\alpha}^p(\R,H^{1/2}(\Gamma))$, the wave
    equation problem (\ref{waveeqn1}) admits a unique solution
    $u^s\in H_{\sigma,\alpha}^{p-3/2}(\R,H^1(\Omega))$.
  \item[(b)] Given $g_j\in H_{\sigma,\alpha}^p(\R,H^{1/2}(\Gamma_j))$, the
    wave equation problem \eqref{eq_op} admits a unique solution
    $w_j^s\in H_{\sigma,\alpha}^{p-3}(\R,H^1(\Omega_j))$.
\end{itemize}
}

\end{theorem}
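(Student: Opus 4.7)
The plan is to reduce both problems to frequency-domain (modified Helmholtz) problems by means of the Fourier-Laplace transform $\mathcal{L}$, establish unique solvability of each frequency-domain problem on $\C_\sigma$ together with a bound on the solution that grows at most polynomially in $|s|$, and then translate this bound back into the time domain using the definition of the norm on $H_{\sigma,\alpha}^p(\R,D)$ --- which is nothing but a weighted $L^2$ norm on the line $\R+i\sigma$ of the Laplace transform. The exponent of the polynomial bound on $|s|$ dictates the loss of derivatives in time, and causality (vanishing for $t\le\alpha$) is preserved by the Paley-Wiener theorem for the half-plane $\C_\sigma$.

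For part (a), applying $\mathcal{L}$ to (\ref{waveeqn1}) yields, for each $s\in\C_\sigma$, the interior problem $-c^2\Delta U - s^2 U=0$ in $\Omega$ with $U=-U^i$ on $\Gamma$, where $U=\mathcal{L}[u^s]$ and $U^i=\mathcal{L}[u^i]$. Since $\mbox{Im}(s)>\sigma>0$, the associated sesquilinear form is coercive on $H^1(\Omega)$; Lax-Milgram then gives unique solvability, and a careful tracking of the $s$-dependence of the constants (after lifting the boundary data by a bounded right inverse of the trace operator $H^{1/2}(\Gamma)\to H^1(\Omega)$) produces an estimate of the form
\begin{equation*}
\|U(\cdot,s)\|_{H^1(\Omega)} \;\le\; C\, |s|^{3/2}\, \|U^i(\cdot,s)\|_{H^{1/2}(\Gamma)}, \qquad s\in \C_\sigma.
\end{equation*}
Squaring this inequality, multiplying by $|s|^{2(p-3/2)}$ and integrating along $\R+i\sigma$ yields the claimed membership $u^s\in H_{\sigma,\alpha}^{p-3/2}(\R,H^1(\Omega))$. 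For part (b), the same transform converts (\ref{eq_op}) into an exterior open-arc Dirichlet problem for the Helmholtz operator in $\Omega_j=\R^2\setminus\Gamma_j$. Representing the solution via the frequency-domain single layer $W_j=\mathcal{S}_j^s\widetilde\Psi_j$ leads to the boundary integral equation $S_j^s\widetilde\Psi_j = G_j$ on $\Gamma_j$ with $G_j=\mathcal{L}[g_j]$, and the Bamberger--Ha-Duong sign-definiteness argument, adapted to the open-arc setting, provides coercivity of $S_j^s$ on $\widetilde H^{-1/2}(\Gamma_j)$ together with bounds on its inverse that grow as a fixed power of $|s|$. Combining the two steps (solving the integral equation and reconstructing the potential) produces an estimate of the form $\|W_j(\cdot,s)\|_{H^1(\Omega_j)}\le C|s|^{3}\|G_j(\cdot,s)\|_{H^{1/2}(\Gamma_j)}$, which integrated against the weight $|s|^{2(p-3)}$ delivers the stated regularity.

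The main obstacle is neither the existence and uniqueness, which is straightforward once the sesquilinear forms are identified, nor the back-transform, which is automatic from the definition of $H_{\sigma,\alpha}^p(\R,D)$, but rather the sharp bookkeeping of the $s$-dependent constants in the open-arc case of part (b). The loss of three derivatives, as opposed to the loss of $3/2$ derivatives in part (a), is exactly the price of the weaker mapping properties of the open-arc single-layer operator, whose inverse on $\widetilde H^{-1/2}(\Gamma_j)$ picks up an additional factor of $|s|^{3/2}$ compared with its closed-boundary counterpart. Since the required $s$-explicit frequency-domain estimates for both the interior problem and the open-arc scattering problem are developed in detail in \cite{CHLM10,YMC22}, the verification ultimately reduces to invoking those references and inserting the resulting bounds into the weighted Plancherel identity built into the space $H_{\sigma,\alpha}^p(\R,D)$.
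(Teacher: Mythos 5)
Your proposal is correct and follows essentially the same route as the paper, which in fact offers no argument of its own for Theorem~\ref{welltime} beyond citing \cite{CHLM10,YMC22}: the standard Laplace/Fourier-domain passage, coercivity of the transformed problems for $\mathrm{Im}(s)>\sigma>0$ with polynomially $|s|$-dependent stability constants, and the weighted Plancherel identity defining $H^p_{\sigma,\alpha}(\R,D)$ are exactly the mechanism behind the cited results, with the exponents $3/2$ and $3$ matching the statement. The only quibble is in your closing discussion: the extra $3/2$ loss in part (b) is attributable to the boundary-integral (single-layer) solution route rather than to open arcs being intrinsically worse than closed boundaries, but this heuristic remark does not affect the validity of the argument.
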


\subsection{Huygens-like domain-of-influence along boundaries}
\label{sec:2.3}

The multiple scattering algorithm proposed in this paper depends in an
essential manner on a certain domain-of-influence condition, stated as
Condition~\ref{localpro0} below, which is in essence a variant of the
well known Huygens principle in a form that is applicable to the
problem of scattering by obstacles and open arcs. Thus,
Condition~\ref{localpro0} expresses a well accepted principle in wave
physics, namely, that solutions of the wave equation propagate at the
speed of sound, and that the wave field vanishes identically before
the arrival of a wavefront. This property has been rigorously
established by the method of spherical means~\cite{BC39} for the
problem of propagation of waves in space without scatterers. Further,
some mathematical results have previously been given for the
corresponding problem of scattering by obstacles~\cite[Proposition
3.6.2]{S16}. But previously available results for obstacle-scattering
problems are not sharp, as they only ensure that the field propagates
away from the complete boundary (with speed equal to the speed of
sound), but they do not account for propagation {\em along} the
scattering boundary. In particular, for incident fields illuminating a
subset of the boundary of a scatterer, previous theoretical results do
not establish that the field propagates at the speed of sound along
the scattering boundary. This boundary-propagation character provides
a crucial element in the main theorem of this paper,
Theorem~\ref{equivalence}---which, showing that the exact solution of
the problem (\ref{waveeqn}) can be expressed as the sum of a series of
multiple-scattering iterates, forms the basis of the ping-pong
multiple-scattering algorithm proposed in this paper. Although we
conjecture that Condition~\ref{localpro0} is always valid, to the best
of our knowledge such a result has not previously been
established. \textcolor{r1}{A full theoretical treatment of this
  problem is beyond the scope of this paper, but, as indicated in
  Remark~\ref{rem_conj}, this paper does include a complete proof for
  the case of straight arcs as well as clear numerical supporting
  evidence for the validity of this condition for curved arcs.}

\textcolor{r1}{Let $\mathcal{C}$ denote an Lipschitz open arc. Given
  an incident signal $g(x,t)$ defined for $x\in\mathcal{C}$ and
  $t\in\mathbb{R}$, which vanishes for $t\leq 0$, consider the wave
  equation problem
  \begin{eqnarray}
\label{eq_opp}
\begin{cases}
\frac{\pa^2w^s}{\pa t^2}(x,t)-c^2\Delta
w^s(x,t)=0, &(x,t)\in \R^2\backslash\mathcal{C}\times \R, \cr
w^s(x,t)=g(x,t),  & (x,t)\in \mathcal{C}\times\R.
\end{cases}
\end{eqnarray}
Using these notations, the necessary Huygens-like condition is
presented in what follows.}

\begin{condition}
\label{localpro0}
\textcolor{r1}{We say that an open Lipschitz curve $\mathcal{C}$ with
  endpoints $e_1$ and $e_2$ satisfies the restricted Huygens condition iff for
  every Lipschitz curve
  $\mathcal{C}^{\mathrm{inc}}\subseteq\mathcal{C}$ satisfying
  $\mathrm{dist}(\mathcal{C}^{\mathrm{inc}},\{e_1,e_2\})> 0$, and for
  every function $g\in H_{\sigma,0}^{p}(\R,H^{1/2}(\mathcal{C}))$ defined in $\mathcal{C}$ such that} \be
\label{DoI_ball_ass}
\textcolor{r1}{\{x\in\mathcal{C}\ |\ g(x,t)\ne 0 \}\subseteq \mathcal{C}^{\mathrm{inc}}\quad \mbox{for all}\quad t>0,}
\en
\textcolor{r1}{we have}
\be
\label{DoI_ball_res1}
\textcolor{r1}{\{x\in\R^2 \ |\ w^s(x,t)\ne 0\} \subseteq \Lambda^s(t)\quad \mbox{for all}\quad t\le c^{-1}\mathrm{dist}(\mathcal{C}^{\mathrm{inc}},\{e_1,e_2\}),}
\en
where
\ben
\textcolor{r1}{\Lambda^s(t)=\{x\in\R^2 \ |\ \mathrm{dist}(x,\mathcal{C}^{\mathrm{inc}})\leq ct\}.}
\enn
\end{condition}

\begin{remark}\label{rem_conj}
  \textcolor{r1}{We conjecture that Condition~\ref{localpro0} holds
    for arbitrary open and closed Lipschitz curves $\mathcal{C}$
    (where, in the case of closed curves, the wave equation problem is
    posed either in the interior or the exterior of the curve) and for
    all $t>0$ (without the restriction
    $t\le
    c^{-1}\mathrm{dist}(\mathcal{C}^{\mathrm{inc}},\{e_1,e_2\})$). The
    proof is left for future work. The validity of
    Condition~\ref{localpro0} for the case in which $\mathcal{C}$ is a
    line segment is established in the following lemma. We have also
    verified numerically the validity of this condition for a wide
    range of curved open arcs; one such verification is presented in
    Section 2.3.1 below.}
\end{remark}

\textcolor{r1}{In what follows we denote
  $\R^2_\pm:=\{x=(x_1,x_2)\in\R^2: x_2\gtrless0\}$ and
  $\R^2_0:=\{x=(x_1,x_2)^\top\in\R^2: x_2=0\}$.}
\begin{lemma}
\label{lemma_circle}
\textcolor{r1}{Let $c_1<c_2$. Then the (straight) open arc
  $\mathcal{C}=(c_1,c_2)\times\{0\}\subset\R^2_0$ satisfies the restricted
  Huygens Condition~\ref{localpro0}.}
\end{lemma}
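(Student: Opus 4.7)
The plan is to reduce the problem (\ref{eq_opp}) on $\R^2\setminus\mathcal{C}$ to two decoupled Dirichlet half-plane wave problems on $\R^2_+$ and $\R^2_-$---for which a sharp domain-of-influence bound is available from classical theory---and then to glue the two half-plane solutions across the gap $\R^2_0\setminus\mathcal{C}$. The gluing is enabled by the choice of time window $t\leq c^{-1}d$ with $d:=\mathrm{dist}(\mathcal{C}^{\mathrm{inc}},\{e_1,e_2\})>0$: in that window no wavefront originating from $\mathcal{C}^{\mathrm{inc}}$ can yet have reached $\R^2_0\setminus\mathcal{C}$, so the endpoints of $\mathcal{C}$ are invisible and the straight-arc problem is indistinguishable from the full-line problem.

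First I would extend $g$ by zero from $\mathcal{C}$ to all of $\R^2_0$ to obtain $g_e$; since $\mathrm{supp}\, g(\cdot,t)\subseteq\mathcal{C}^{\mathrm{inc}}$ is separated from the endpoints of $\mathcal{C}$, this extension preserves the spatial $H^{1/2}$ regularity together with the temporal regularity of $g$. I would then consider the two Dirichlet half-plane wave equation problems
\begin{equation*}
\frac{\pa^2w^{\pm}}{\pa t^2}-c^2\Delta w^{\pm}=0\ \ \text{in}\ \R^2_{\pm}\times\R,\qquad w^{\pm}=g_e\ \ \text{on}\ \R^2_0\times\R,
\end{equation*}
with vanishing past. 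The essential step is the sharp support bound $\mathrm{supp}\, w^\pm(\cdot,t)\subseteq\overline{\Lambda^s(t)\cap\R^2_{\pm}}$, which I would obtain via the standard energy estimate on the backward light cone rooted at a point $(x_0,t_0)$ with $\mathrm{dist}(x_0,\mathcal{C}^{\mathrm{inc}})>ct_0$: such a cone meets $\R^2_0$ in a disk disjoint from $\mathcal{C}^{\mathrm{inc}}$, so the Dirichlet data $g_e$ vanishes on the lateral boundary of the cone and the usual flux inequality forces $w^\pm(x_0,t_0)=0$. Uniqueness of the half-plane problem, combined with the invariance of $g_e$ under $x_2\mapsto -x_2$, yields the symmetry $w^-(x_1,x_2,t)=w^+(x_1,-x_2,t)$.

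Next, I would set $v(x,t):=w^{\pm}(x,t)$ for $x\in\R^2_{\pm}$ and argue that $v$ (after extension across the gap) solves (\ref{eq_opp}) on $(\R^2\setminus\mathcal{C})\times(0,c^{-1}d)$. For any $x^*\in\R^2_0\setminus\mathcal{C}$ and any $t<c^{-1}d$, the triangle inequality gives $\mathrm{dist}(x^*,\mathcal{C}^{\mathrm{inc}})\ge d>ct$, so a full spacetime neighborhood of $(x^*,t)$ lies outside the cone of influence of $\mathcal{C}^{\mathrm{inc}}$ and, by the support bound above, both $w^+$ and $w^-$ vanish on that neighborhood. The piecewise $v$ is therefore identically zero on an open neighborhood of $(\R^2_0\setminus\mathcal{C})\times[0,c^{-1}d)$, extends trivially across the gap, and defines a solution of the wave equation on $\R^2\setminus\mathcal{C}$ with trace $g$ on $\mathcal{C}$ and vanishing past. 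The uniqueness assertion of Theorem~\ref{welltime}(b) (applied, if needed, after extending the boundary data beyond $t=c^{-1}d$ in any causal manner) then gives $w^s=v$ on the relevant spacetime region; the support bound $\mathrm{supp}\, w^s(\cdot,t)\subseteq\Lambda^s(t)$ follows immediately for $t<c^{-1}d$ and for $t=c^{-1}d$ by continuity in $t$.

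The main technical obstacle I anticipate is the justification of the energy estimate in the low-regularity Sobolev framework of Theorem~\ref{welltime}, since the pointwise integration by parts on the backward cone formally requires smoothness of $w^\pm$ up to the boundary $\R^2_0$ that need not be available a priori at the regularity levels admitted in the statement. The standard remedy is to approximate $g$ by smooth data $g_\varepsilon$, establish the sharp support bound for the corresponding regularized solutions $w^\pm_\varepsilon$, and then pass to the limit using the continuous dependence of $w^\pm$ on the boundary data. The gluing step, by contrast, is essentially trivial once the half-plane support bound is in hand, since the two pieces vanish outright in a uniform open spacetime neighborhood of the gluing interface and no delicate matching of traces or normal derivatives is required.
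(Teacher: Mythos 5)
Your proposal is correct and follows the same overall architecture as the paper's proof: extend $g$ by zero to all of $\R^2_0$, solve the two decoupled Dirichlet half-plane problems on $\R^2_\pm$, observe that for $t\le c^{-1}\mathrm{dist}(\mathcal{C}^{\mathrm{inc}},\{e_1,e_2\})$ both half-plane solutions (and their normal derivatives) vanish on the gap $\R^2_0\backslash\mathcal{C}$, and glue them into the unique solution of the open-arc problem, from which the support bound for $w^s$ is inherited. The one genuine difference is how the sharp half-plane domain-of-influence bound is obtained. The paper derives, via the method of images and a dimension-reduction computation carried out in Appendix~\ref{sec:A}, the explicit retarded double-layer representation (\ref{appendx_solrep}) for $v_+^s$, from which the support property is read off immediately: the time integral runs over $(0,\,t-c^{-1}|x-y|)$, which is empty whenever $x\notin\Lambda^s(t)$. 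You instead prove the same bound by the classical energy estimate on backward light cones truncated by $\R^2_0$, using that the cone's trace on $\R^2_0$ misses $\mathcal{C}^{\mathrm{inc}}$ precisely when $\mathrm{dist}(x_0,\mathcal{C}^{\mathrm{inc}})>ct_0$, so the flat boundary flux $c^2\,\partial_t w^\pm\,\partial_\nu w^\pm$ vanishes there; you correctly flag that this integration by parts must be justified by regularizing the data and passing to the limit via continuous dependence. Each route buys something: the paper's explicit kernel is needed anyway for the double-layer formula of Remark~\ref{double_layer} and yields the support property with no regularity caveats, while your energy argument is more elementary and more robust---it does not rely on the availability of an image Green's function and would adapt to half-space problems where no closed-form kernel exists---at the cost of the density/limiting step. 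Your observation that collinearity gives $\mathrm{dist}(x^*,\mathcal{C}^{\mathrm{inc}})\ge d$ for every $x^*$ in the gap, so that the glued function vanishes on a full spacetime neighborhood of the gluing interface for $t<c^{-1}d$, is a clean way to avoid any trace-matching discussion, with the endpoint $t=c^{-1}d$ recovered by continuity as you indicate.
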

\begin{proof}
  \textcolor{r1}{Let $\mathcal{C}^{\mathrm{inc}}\subseteq \mathcal{C}$
    denote an arc contained in $\mathcal{C}$ satisfying
    $\mathrm{dist}(\mathcal{C}^{\mathrm{inc}},\{(c_1,0),(c_2,0)\})>
    0$, let a function
    $g\in H_{\sigma,0}^{p}(\R,H^{1/2}(\mathcal{C}))$ be given that
    satisfies the assumption (\ref{DoI_ball_ass}), extend $g$ to all
    of $\R^2_0$ by setting $g=0$ in $\R^2_0\backslash\mathcal{C}$, and
    consider the problems \be
\label{waveeqn-ballTD}
\begin{cases}
\frac{\pa^2v_\pm^s}{\pa t^2}(x,t)-c^2\Delta v_\pm^s(x,t)=0, & (x,t)\in \R^2_\pm\times\R, \cr
v_\pm^s(x,t)= g(x,t),  & (x,t)\in\R^2_0\times\R
\end{cases}
\en for the functions $v_\pm^s = v_\pm^s(x,t)$. In view of equation
(\ref{appendx_solrep}) in Appendix~\ref{sec:A} it follows that
\begin{align}
\label{TDsol1}
 v_+^s&(x,t)=\frac{1}{\pi c^2} \nonumber\\
& \int_{\mathcal{C}^{\mathrm{inc}}}\int_{0}^{t-c^{-1}|x-y|} \left[\frac{x_2g(y,\tau)} {(t-\tau)^2\sqrt{(t-\tau)^2-c^{-2}|x-y|^2}} +\frac{x_2g^{(1)}(y,\tau)} {(t-\tau)\sqrt{(t-\tau)^2-c^{-2}|x-y|^2}} \right] d \tau ds_y
\end{align}
for all $x\in\R^2_+$ and all $t>0$, where
$g^{(1)}(x,t)=\frac{\pa g}{\pa t}(x,t)$. Noting that for $t>0$ and
$x\notin\Lambda^s(t)$ we have $t-c^{-1}|x-y|<0$ for all
$y\in\mathcal{C}^{\mathrm{inc}}$, and since $g(\cdot,t)=0$ for all
$t\le0$ by assumption, we conclude that
$\{x\in\R^2 \ |\ v_+^s(x,t)\ne 0\} \subseteq \Lambda^s(t)$ for all
$t>0$. Similarly,
$\{x\in\R^2 \ |\ v_-^s(x,t)\ne 0\} \subseteq \Lambda^s(t)$ for all
$t>0$. It follows that \ben
\begin{cases}
v_+^s(x,t)=v_-^s(x,t) =0 \cr
\pa_{x_2}v_+^s(x,t)=\pa_{x_2}v_-^s(x,t) =0
\end{cases}
\quad\mbox{for}\quad x\in \R_0^2\backslash\mathcal{C}\quad\mathrm{and}\quad t\le c^{-1}\mathrm{dist}(\mathcal{C}^{\mathrm{inc}},\{(c_1,0),(c_2,0)\}).
\enn
This implies  that
\ben
w^s(x,t)=\begin{cases}
v_+^s(x,t), & x\in\R^2_+,\cr
v_-^s(x,t), & x\in\R^2_-,\cr
0, & x\in \R^2_0\backslash\mathcal{C},\cr
\end{cases}\quad t\le c^{-1}\mathrm{dist}(\mathcal{C}^{\mathrm{inc}},\{(c_1,0),(c_2,0)\}),
\enn is the unique solution to the wave equation
problem~\eqref{eq_opp} for
$t\le c^{-1}\mathrm{dist}(\mathcal{C}^{\mathrm{inc}},\{(c_1,0),(c_2,0)\})$. Hence,
the condition \ben \{x\in\R^2 \ |\ w^s(x,t)\ne 0\} \subseteq
\Lambda^s(t)\quad \mbox{for all}\quad t\le
c^{-1}\mathrm{dist}(\mathcal{C}^{\mathrm{inc}},\{(c_1,0),(c_2,0)\})
\enn for the function $w^s$ follows from the corresponding properties,
established above, for the functions $v_\pm^s$, and the proof of the
lemma is complete.}
\end{proof}

\textcolor{r1}{For ease of reference, in the following lemma we
  present the Huygens Condition~\ref{localpro0} in the form that will
be used in the proof of Theorem~\ref{equivalence}. In order to match
the setting of the theorem, for an integer $j$ we introduce the
notation
\begin{equation}\label{prime} j' = \mathrm{mod}(j,2)+1\quad
  (j\in\mathbb{N}),
\end{equation}
where, for integers $a$ and $b$, $\mathrm{mod}(a,b)$ denotes the remainder of the division of $a$
by $b$. In our context, where the index values $j=1,2$ refer to the
corresponding arcs $\Gamma_1$, $\Gamma_2$, we have $j'=1$
(resp. $j'=2$) for $j=2$ (resp. $j=1$).
\begin{lemma}
\label{localpro}
Let $j\in\{1,2\}$, $p\in\R$, and $T_0>0$, and assume that, for
$j\in\{1,2\}$, (a)~$g_j\in H_{\sigma,T_0}^{p}(\R,H^{1/2}(\Gamma_j))$
satisfies
\begin{equation}\label{ini_set}
  g_j(x,t)=0\quad\mbox{for}\quad
  (x,t)\in\Gamma_{12}\times \R;
\end{equation}
and, (b)~$\Gamma_j$ satisfies Condition~\ref{localpro0}. Then,
recalling equation~\eqref{dist}, letting $t_0=\delta_{12}/c>0$, and
calling
\textcolor{r1}{$w_j^s\in H_{\sigma,T_0}^{p-3}(\R,H^1(\Omega_j))$ the
  unique} solution of the wave equation problem \eqref{eq_op}, we have
\begin{equation}\label{ini_set_res}
  w_j^s(x,t)=0\quad\mbox{for}\quad
  (x,t)\in \Gamma_{j'}^{\rm tr} \times (-\infty,T_0+t_0].
\end{equation}
\end{lemma}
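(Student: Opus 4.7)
The plan is to invoke the Huygens-type Condition~\ref{localpro0} on the arc $\Gamma_j$, using the sub-arc $\mathcal{C}^{\mathrm{inc}}=\Gamma_j^{\mathrm{tr}}$ as the $x$-support of the incident data; the conclusion will then follow by comparing the resulting light-cone bound on the scattered field with the separation $\delta_{12}$ between $\Gamma_j^{\mathrm{tr}}$ and $\Gamma_{j'}^{\mathrm{tr}}$.

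First I would reduce to the standard setting of Condition~\ref{localpro0} by absorbing the time shift: define $\widetilde{g}_j(x,t):=g_j(x,t+T_0)$ (so that $\widetilde{g}_j\in H_{\sigma,0}^{p}(\R,H^{1/2}(\Gamma_j))$) and, by uniqueness, the shifted solution $\widetilde{w}_j^s(x,t):=w_j^s(x,t+T_0)$. Since $g_j$ vanishes on $\Gamma_{12}$, one has $\{x\in\Gamma_j:\widetilde{g}_j(x,t)\ne 0\}\subseteq \Gamma_j^{\mathrm{tr}}$ for all $t>0$. The truncated arc $\Gamma_j^{\mathrm{tr}}$ is a Lipschitz sub-arc of $\Gamma_j$ lying at a strictly positive distance $d_j:=\mathrm{dist}(\Gamma_j^{\mathrm{tr}},\{e_1^j,e_2^j\})$ from the endpoints of $\Gamma_j$, because the deleted set $\Gamma_{12}=\Gamma_{12}^1\cup\Gamma_{12}^2$ consists of closed arcs of positive length surrounding those endpoints. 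Condition~\ref{localpro0} is therefore applicable with $\mathcal{C}^{\mathrm{inc}}=\Gamma_j^{\mathrm{tr}}$ and yields the light-cone bound $\{x\in\R^2:\widetilde{w}_j^s(x,\tau)\ne 0\}\subseteq\{x\in\R^2:\mathrm{dist}(x,\Gamma_j^{\mathrm{tr}})\le c\tau\}$ for every $\tau\le c^{-1}d_j$.

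The main step, and the only place where geometry specific to the overlapping-arc decomposition enters, is the inequality $\delta_{12}\le d_j$, which is what guarantees that the validity range of Condition~\ref{localpro0} covers the whole target interval $\tau\in[0,t_0]$. I would establish it as follows: each endpoint $e$ of $\Gamma_j$ lies in $\Gamma_{12}\subset \Gamma_{j'}$ as a relative interior point of $\Gamma_{j'}$, and moving along $\Gamma_{j'}$ away from $e$ in the direction exiting $\Gamma_j$ one immediately enters $\Gamma_{j'}\setminus\Gamma_j=\Gamma_{j'}^{\mathrm{tr}}$; hence $\mathrm{dist}(e,\Gamma_{j'}^{\mathrm{tr}})=0$. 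The triangle inequality then gives $\mathrm{dist}(x,\Gamma_{j'}^{\mathrm{tr}})\le |x-e|$ for every $x\in\Gamma_j^{\mathrm{tr}}$ and every endpoint $e$ of $\Gamma_j$; taking infima in $x$ and then the minimum over the two endpoints produces $\delta_{12}\le d_j$, i.e., $t_0\le c^{-1}d_j$.

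To finish, I would combine the two previous steps: for $x\in\Gamma_{j'}^{\mathrm{tr}}$ and $\tau\in(0,t_0)$, the estimate $\tau<t_0\le c^{-1}d_j$ places $\tau$ in the validity range of Condition~\ref{localpro0}, while $c\tau<\delta_{12}\le \mathrm{dist}(x,\Gamma_j^{\mathrm{tr}})$ places $x$ outside the light cone, forcing $\widetilde{w}_j^s(x,\tau)=0$. Undoing the time shift yields $w_j^s(x,t)=0$ for $T_0<t<T_0+t_0$, and combining with the causality $w_j^s(x,t)=0$ for $t\le T_0$ inherited from $w_j^s\in H_{\sigma,T_0}^{p-3}(\R,H^1(\Omega_j))$, together with a standard continuity/density extension to close the interval at $t=T_0+t_0$, delivers the claimed identity on $\Gamma_{j'}^{\mathrm{tr}}\times(-\infty,T_0+t_0]$. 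I expect the geometric inequality $\delta_{12}\le d_j$ to be the most delicate ingredient: it is exactly what encodes the compatibility between the single-arc Huygens property assumed in~(b) and the multi-patch geometry that drives the ping-pong iteration.
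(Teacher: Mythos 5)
Your proposal is correct and follows the route the paper intends: the paper states Lemma~\ref{localpro} without proof, as a direct reformulation of Condition~\ref{localpro0} applied with $\mathcal{C}=\Gamma_j$ and $\mathcal{C}^{\mathrm{inc}}=\Gamma_j^{\rm tr}$, and your write-up supplies exactly that derivation (time-shift by $T_0$, light-cone exclusion of $\Gamma_{j'}^{\rm tr}$ for $c\tau<\delta_{12}$). Your one substantive addition --- the geometric inequality $\delta_{12}\le \mathrm{dist}(\Gamma_j^{\rm tr},\{e_1^j,e_2^j\})$, obtained from the fact that each endpoint of $\Gamma_j$ lies in the closure of $\Gamma_{j'}^{\rm tr}$ --- is indeed the detail needed to ensure that the restricted validity range $t\le c^{-1}\mathrm{dist}(\mathcal{C}^{\mathrm{inc}},\{e_1,e_2\})$ of Condition~\ref{localpro0} covers the whole interval $[0,t_0]$, and it is correct.
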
}

\subsubsection{Numerical verification of \textcolor{r1}{the Huygens condition for
 elliptical arcs}.\label{sec:2.3.1}}
As indicated above, we have conducted a number of numerical tests
\textcolor{r1}{which clearly suggest that}, as expected,
Condition~\ref{localpro0} and Lemma~\ref{localpro} are universally
valid. For reference \textcolor{r1}{in this section we present the results of one such test}. To
introduce our example we let \ben
&& \Gamma_1=\{x=(\cos\theta,1.5\sin\theta): \theta\in(0.5\pi,1.5\pi)\},\\
&& \Gamma_2^{\mathrm{tr}}=\{x=(\cos\theta,1.5\sin\theta): \theta\in(-0.5\pi,0.5\pi)\},\\
&& \Gamma_{12}=\{x=(\cos\theta,1.5\sin\theta):
\theta\in(0.5\pi,0.75\pi)\cup(1.25\pi,1.5\pi)\}, \enn and we consider
the wave equation problem \eqref{eq_op} with $j=1$ and \ben
g_1(x,t)=\begin{cases} \left[1-\cos4(\theta-0.75\pi)\right]
  \exp(-16(t-3)^2), &
  (x,t)=(\cos\theta,1.5\sin\theta)\in\Gamma_1^{\mathrm{tr}}\times
  (T_0,\infty), \cr 0, & (x,t)\in\Gamma_{12}\times (T_0,\infty),\cr 0,
  & (x,t)\in\Gamma_1\times (-\infty,T_0].
\end{cases}
\enn
where $T_0=1.74$. (This selection of $T_0$ makes $g_1(x,t)$ ``approximately continuous'' at $t=T_0$, since, as is easily checked, $\left[1-\cos4(\theta-0.75\pi)\right] \exp(-16(t-3)^2)<10^{-11}$ for $(x,t)\in\Gamma_1^{\mathrm{tr}}\times (-\infty,T_0)$.) It can also be checked that $t_0=\delta_{12}/c\approx 0.83$ for the geometry under consideration.

Table~\ref{testhuygens1} presents the maximum values of $|w_1^s(x,t)|$
over several time intervals at four points on $\Gamma_2^{\mathrm{tr}}$
\ben x_1=(0.031,1.499),\quad x_2=(0.5,1.299),\quad x_3=(0.866,0.75),
\quad x_4=(1,0); \enn note, in particular, that the point
$(0.031,1.499)$ is very close to $\Gamma_1$. The first column in this
table shows that for all four points \textcolor{r1}{$x_j\in\Gamma_2^{\mathrm{tr}}$, $j=1,\cdots,4$,} and
for $t<T_0+t_0$, the relation (\ref{ini_set_res}) is verified up to
the numerical error, of order $\mathcal{O}(10^{-11})$, inherent in the
numerical solution used. To further illustrate the validity of the
Huygens condition for this test case, we let
$t_\ell=c^{-1}\mathrm{dist}\{x_\ell,\Gamma_1^{\mathrm{tr}}\}$,
$\ell=2,3,4$ which gives \ben t_2\approx 1.23,\quad t_3\approx
1.60\quad\mathrm{and}\quad t_4= 2.  \enn The maximum values of
$|w_1^s(x_\ell,t)|$ listed in the last three columns in
Table~\ref{testhuygens1}, which correspond to the time intervals
$t\in(0,T_0+t_0+t_\ell),\ell=2,3,4$, illustrate, more generally, the
Huygens-like domain-of-influence property \ben
w_1^s(x,t)=0\quad\mbox{for}\quad (x,t)\in \Gamma_{2}^{\rm tr} \times
(-\infty,T_0+t_x),\quad
t_x=c^{-1}\mathrm{dist}\{x,\Gamma_1^{\mathrm{tr}}\}.  \enn

\begin{table}[htb]
\caption{Maximum values of $|w_1^s(x,t)|$ over various time intervals at four points on $\Gamma_2^{\mathrm{tr}}$.}
\centering
\begin{tabular}{|c|c|c|c|c|c|}
\hline
$x$ & $\max\limits_{t\in(0,T_0+t_0)}|w_1^s(x,t)|$ &  $\max\limits_{t\in(0,T_0+t_0+t_2)}|w_1^s(x,t)|$ & $\max\limits_{t\in(0,T_0+t_0+t_3)}|w_1^s(x,t)|$ & $\max\limits_{t\in(0,T_0+t_0+t_4)}|w_1^s(x,t)|$\\
\hline
$x_1$ & $1.81\times 10^{-12}$  & $2.32\times 10^{-8}$  & $1.71\times 10^{-4}$  & $1.70\times 10^{-2}$ \\
\hline
$x_2$ & $4.59\times 10^{-12}$  & $4.59\times 10^{-12}$ & $1.29\times 10^{-7}$  & $1.49\times 10^{-3}$ \\
\hline
$x_3$ & $5.23\times 10^{-12}$  & $5.23\times 10^{-12}$ & $5.23\times 10^{-12}$ & $7.93\times 10^{-7}$ \\
\hline
$x_4$ & $6.98\times 10^{-12}$  & $7.17\times 10^{-12}$ & $7.39\times 10^{-12}$ & $2.61\times 10^{-11}$ \\
\hline
\end{tabular}
\label{testhuygens1}
\end{table}

\subsection{Two-arc ``ping-pong'' multiple scattering construction}
\label{sec:2.4}

Taking into account the finite propagation speed that characterizes
the solutions of the wave equation, we propose to produce the
time-domain solution of the original problem~\eqref{waveeqn} in the
interior domain $\Omega$ as the sum of ``ping-pong'' wave-equation
solutions \textcolor{r1}{produced under multiple scattering} by the arcs $\Gamma_1$
and $\Gamma_2$. To describe the ping-pong multiple-scattering scheme
we introduce a few useful notations and conventions. We call
\begin{equation}\label{index_shift}
  \quad j(m)=2-{\rm mod}(m,2),\quad m=1,2,3,\dots
\end{equation}
(in other words, $j(m)$ equals $1$ or $2$ depending on whether $m$ is
odd or even, respectively), and, as detailed in
Definition~\ref{inductive}, we inductively define boundary-condition
functions $f_m(x,t)$ ($m\geq 1$) and associated wave-equation
solutions $v_m(x,t)$ ($m\geq 1$), all of which are {\em causal}---that
is to say, they vanish identically for $t\leq 0$.

\begin{definition}\label{inductive}
  For $m\in\mathbb{N}$ we inductively define $v^s_m(x,t)$ as equal to
  the solution $w^s_{j(m)}(x,t)$ of the open-arc
  problem~\eqref{eq_op} with boundary data
  \begin{equation}\label{bdry_m}
    v^s_m(x,t) = f_m(x,t)\quad \mbox{for}\quad (x,t)\in\Gamma_{j(m)}\times \textcolor{r1}{\R},\quad (m \in\mathbb{N}),
  \end{equation}
  where $f_{m}(x,t):\Gamma_{j(m)}\times \textcolor{r1}{\R}\to\mathbb{C}$ denotes the causal
  functions defined inductively via the relations
    \begin{equation}
  \label{criterion0}
  f_1(x,t)= -u^i(x,t)\quad\mbox{on}\quad\Gamma_1,\quad f_2(x,t)= -u^i(x,t)-v^s_1(x,t)\quad\mbox{on}\quad\Gamma_2,
\end{equation}
and,   \begin{equation}
  \label{criterion2}
  f_m(x,t)=-v^s_{m-1}(x,t),\quad\mbox{on}\quad\Gamma_{j(m)}, \quad m \geq 3.
\end{equation}
\end{definition}

\begin{remark}\label{vanish12}
The proposed multiple-scattering strategy relies crucially on the
relations
  \begin{equation}\label{vanishing}
    f_{m}(x,t) = 0\quad \mbox{for}\quad
      (x,t)\in\Gamma_{12}\times \textcolor{r1}{\R}, \quad m\in\mathbb{N},\quad m\geq 2,
\end{equation}
which can easily be established inductively, as indicated in what
follows. Considering first the case $m=2$, in view of
Definition~\ref{inductive}, we have
$v^s_2(x,t)=f_2(x,t)=-u^i(x,t)-v^s_1(x,t)$ on $\Gamma_2$, on one hand,
and $v^s_1(x,t)=-u^i(x,t)$ on $\Gamma_1$, on the other. We conclude
that $v^s_2(x,t)=0$ for $(x,t)\in\Gamma_{12}\times \textcolor{r1}{\R}$,
as desired. The inductive step is equally simple: assuming, for
$\ell\in\mathbb{N}, \ell\ge 2$, that $f_\ell(x,t)$ vanishes for
$(x,t)\in\Gamma_{12}\times \textcolor{r1}{\R}$, and in view
of~(\ref{bdry_m}) and~(\ref{criterion2}), we have
$f_{\ell+1}(x,t)=-v^s_\ell(x,t)=-f_{\ell}(x,t)=0$ for
$(x,t)\in\Gamma_{12}\times \textcolor{r1}{\R}$, and~\eqref{vanishing}
follows.
\end{remark}

The main theorem of this paper, which is presented in what follows,
\textcolor{r1}{shows that the solution $u^s=u^s(x,t)$ of equation~\eqref{waveeqn} can
be produced by means of the} $M$-th order multiple-scattering sum
\begin{equation}\label{mult_scatt}
  u^s_M(x,t):=\sum_{m=1}^M
  v^s_{m}(x,t),
\end{equation}
which includes contributions from the ``ping-pong'' scattering
iterates $v^s_m(x,t)$ with $m=1,\dots, M$.

\begin{theorem}
\label{equivalence}
\textcolor{r1}{Let $M\in\mathbb{N}$, $M\geq 2$, $p\in\R$, and let
  $T = T(M) = (M-1) \delta_{12}/c$. Then, given
  $u^i\in H_{\sigma,0}^{p}(\R,H^{1/2}(\Gamma))$, we have
  $u^s_M\in H_{0}^{p-3/2}(T(M),H^1(\Omega))$ and
  $u^s(x,t) =u^s_M(x,t)$ for all
  $(x,t)\in\Omega\times (-\infty,T(M)]$.}
\end{theorem}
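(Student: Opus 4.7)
The plan is to identify $u^s_M$ with the unique solution $u^s$ of the interior problem~\eqref{waveeqn1} on the window $(-\infty,T(M)]$ by invoking the uniqueness part of Theorem~\ref{welltime}(a). To that end I will verify, on this window, three properties of $u^s_M$: it satisfies the wave equation in $\Omega$ (immediate, since each $v^s_m$ solves the wave equation on $\Omega_{j(m)} = \R^2\setminus\Gamma_{j(m)} \supset \Omega$); it is causal (inherited from each individual $v^s_m$); and its trace on $\Gamma$ equals $-u^i$. The content of the argument lies in this last property, which I would extract from the telescoping structure of Definition~\ref{inductive} combined with the restricted Huygens statement of Lemma~\ref{localpro}.

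First, I would perform an induction on $m \geq 2$ establishing the conjunction (a)~$f_m = 0$ on $\Gamma_{12}\times\R$ and (b)~$f_m = 0$ on $\Gamma_{j(m)}\times(-\infty,(m-2)t_0]$, where $t_0 = \delta_{12}/c$. The base case $m=2$ is Remark~\ref{vanish12} together with the causality of $u^i$ and $v^s_1$. For the inductive step, (a) and (b) at level $m$ allow me to apply Lemma~\ref{localpro} with $T_0 = (m-2)t_0$, yielding $v^s_m = 0$ on $\Gamma_{j(m)'}^{\mathrm{tr}}\times(-\infty,(m-1)t_0]$; combined with $v^s_m|_{\Gamma_{12}} = f_m|_{\Gamma_{12}} = 0$ from (a), this shows $v^s_m = 0$ on the whole arc $\Gamma_{j(m+1)} = \Gamma_{j(m)'}$ up to time $(m-1)t_0 = ((m+1)-2)t_0$, and the identity $f_{m+1} = -v^s_m$ transports (b) to level $m+1$. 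Property (a) propagates since $f_{m+1}|_{\Gamma_{12}} = -v^s_m|_{\Gamma_{12}} = -f_m|_{\Gamma_{12}} = 0$.

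Next I would evaluate $u^s_M|_{\Gamma_1}$ via the cancellation implicit in \eqref{criterion2}: for odd $m \geq 3$ one has $v^s_m|_{\Gamma_1} = f_m = -v^s_{m-1}|_{\Gamma_1}$, so consecutive pairs $(v^s_{2k}, v^s_{2k+1})$ cancel on $\Gamma_1$, leaving only $v^s_1|_{\Gamma_1} = -u^i|_{\Gamma_1}$ and, when $M$ is even, the unpaired last iterate $v^s_M|_{\Gamma_1}$. This residual vanishes on $\Gamma_{12}$ by property~(a) and on $\Gamma_1^{\mathrm{tr}}$ for $t \leq (M-1)t_0 = T(M)$ by the consequence of Lemma~\ref{localpro} obtained in the induction; hence $u^s_M|_{\Gamma_1} = -u^i|_{\Gamma_1}$ on $(-\infty,T(M)]$ for both parities of $M$. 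A symmetric argument on $\Gamma_2$, now pairing $(v^s_{2k-1}, v^s_{2k})$ and starting from $v^s_2|_{\Gamma_2} = -u^i|_{\Gamma_2} - v^s_1|_{\Gamma_2}$, with the unpaired residual arising when $M$ is odd, gives $u^s_M|_{\Gamma_2} = -u^i|_{\Gamma_2}$ on the same window. Theorem~\ref{welltime}(a) then forces $u^s = u^s_M$ on $\Omega\times(-\infty,T(M)]$, and the membership $u^s_M \in H_0^{p-3/2}(T(M),H^1(\Omega))$ follows from the regularity of $u^s$ asserted there (note that the individual terms $v^s_m$ lie in progressively worse Sobolev spaces, but their sum, restricted to the window, inherits the regularity of $u^s$).

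The principal obstacle is the sharpness required of Lemma~\ref{localpro}: the unpaired residual at step $M$ must vanish on the full truncated arc up to the precise time $T(M)=(M-1)\delta_{12}/c$, and it is exactly this sharpness that ties the achievable simulation horizon to the geometric gap $\delta_{12}$ and forces the bookkeeping of indices, parities, and propagation times to line up on the nose at each induction step.
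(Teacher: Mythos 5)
Your proposal is correct and follows essentially the same route as the paper: the same key ingredients (the vanishing of $f_m$ on $\Gamma_{12}$ from Remark~\ref{vanish12}, iterated application of Lemma~\ref{localpro} with $T_0$ advancing by $\delta_{12}/c$ at each step, the telescoping structure of the boundary data, and the final appeal to uniqueness in Theorem~\ref{welltime}(a)) appear in the paper's proof, which merely organizes the bookkeeping as a single induction on $M$ carrying the partial-sum boundary identity \eqref{condition21.4} together with the Huygens vanishing \eqref{condition21.5}, rather than separating the Huygens induction from an explicit parity-based cancellation at the end. The two organizations are equivalent, and your identification of the sharpness of Lemma~\ref{localpro} as the crux matches the paper's emphasis.
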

\begin{proof}
  \textcolor{r1}{By construction, for all $m\in\mathbb{N}$ the function
    $v^s_m(x,t)$ satisfies the homogeneous wave equation for
    $(x, t)\in\Omega\times \R$ as well as vanishing boundary
    conditions for $t\leq 0$. Using Theorem~\ref{welltime}(b)
    inductively, it follows that, for all $m\in\mathbb{N}$,
    $f_m\in H_{\sigma,0}^{p-3(m-1)}(\R,H^{1/2}(\Gamma_j))$, and
    $v_m^s\in H_{\sigma,0}^{p-3m}(\R,H^1(\Omega))$. In particular,
    $u^s_M\in H_{\sigma,0}^{p-3M}(\R,H^1(\Omega))$.}

  \textcolor{r1}{To complete the proof of the theorem, it suffices to
  show that the function $u^s_M$ satisfies}
  \begin{equation}\label{bound_0}
    u^s_M(x,t)+u^i(x,t)=0\quad\mbox{for}\quad (x,t)\in\Gamma\times (-\infty,T(M)].
  \end{equation}
  \textcolor{r1}{Indeed, from this relation it follows that
    $u^s-u^s_M$ satisfies trivial boundary condition up to time
    $T(M)$. Then it follows from Theorem~\ref{welltime}(a) that
    $u^s-u^s_M\in H_{\sigma,T(M)}^{p-3M}(\R,H^1(\Omega))$, and, in
    particular, $u^s-u^s_M$, vanishes throughout $\Omega$ for all
    $t\leq T(M)$. In other words,
    $u^s_M|_{(-\infty,T(M)]}=u^s|_{(-\infty,T(M)]}\in
    H_{0}^{p-3M}(T(M),H^1(\Omega))$. But from
    Theorem~\ref{welltime}(a) we also know that
    $u^s\in H_{\sigma,0}^{p-3/2}(\R,H^1(\Omega))$, and, thus,
    $u^s|_{(-\infty,T(M)]}\in H_{0}^{p-3/2}(T(M),H^1(\Omega))$. It
    follows that, as claimed, $u^s_M$ is a solution of the wave
    equation that coincides with $u^s$ up to time $T(M)$ and satisfies
    $u^s_M|_{(-\infty,T(M)]}\in H_{0}^{p-3/2}(T(M),H^1(\Omega))$.}

  The validity of the relation~\eqref{bound_0}, and thus, the proof of
  the theorem, are established in what follows by induction on the
  integer $M$. We  first verify the relation~\eqref{bound_0} in
  the case $M=2$. Since $u^s_2(x,t) = v_1^s(x,t)+v_2^s(x,t)$ for
  $(x,t)\in\Gamma\times \textcolor{r1}{\R}$, to establish the $M=2$
  result it suffices to show that
  \begin{equation}\label{relation}
    v^s_1(x,t)+v^s_2(x,t)+u^i(x,t)=0\quad\mbox{for}\quad (x,t)\in\Gamma\times
  (-\infty,\delta_{12}/c],
  \end{equation}
  which, in view of (\ref{criterion0}), results from the conditions
  \begin{eqnarray}
\label{condition21.2}
v^s_1(x,t)+v^s_2(x,t)+u^i(x,t)=0\quad\mbox{for}\quad
    (x,t)\in\Gamma_2\times \textcolor{r1}{\R}
  \end{eqnarray} and
  \begin{eqnarray}
\label{condition21.3}
v^s_2(x,t)=0\quad\mbox{for}\quad
(x,t)\in\Gamma_1^{\rm tr}\times(-\infty,\delta_{12}/c].
\end{eqnarray}
Equation (\ref{condition21.2}) follows immediately from Definition~\ref{inductive} since per~\eqref{bdry_m} and~\eqref{criterion0} we have $v^s_2(x,t) = f_2(x,t)= -u^i(x,t)-v^s_1(x,t)$ on $\Gamma_2$ for all $t\in\mathbb{R}$. To verify (\ref{condition21.3}), we note from (\ref{vanishing}) that $f_2(x,t)$ vanishes for $(x,t)\in \Gamma_{12}\times \textcolor{r1}{\R}$. Then in view of \textcolor{r1}{Lemma}~\ref{localpro}, equation (\ref{condition21.3}) results and thus, the proof for the case $M=2$ follows.

Using the notation $j'(m)=\mathrm{mod}(j(m),2)+1$
(equation~\eqref{prime}), to complete the inductive proof we assume
that for any $M\in \mathbb{N}$ with $2\le M\le L, L\geq 2$, the
following two relations hold: \begin{eqnarray}
\label{condition21.4}
u^s_M(x,t)+u^i(x,t)=0\quad\mbox{for}\quad (x,t)\in\Gamma_{j(M)}\times
  \textcolor{r1}{\R},
\end{eqnarray}
and
\begin{eqnarray}
\label{condition21.5}
v^s_{M}(x,t)=0\quad\mbox{for}\quad (x,t)\in\Gamma_{j'(M)}^{\rm
  tr}\times(-\infty,(M-1)\delta_{12}/c].
\end{eqnarray}
We then show that the
same relations and, as a result, the relation
(\ref{bound_0}), hold for $M=L+1$. To do this we note that equation
(\ref{criterion2}) tells us that $v^s_{L+1}(x,t)+v^s_{L}(x,t)=0$ for
$(x,t)\in\Gamma_{j(L+1)}\times\textcolor{r1}{\R}$. Therefore, the
$M=L-1$ condition (\ref{condition21.4})  implies that
\begin{eqnarray}
\label{condition21.6}
u^s_{L+1}(x,t)+u^i(x,t)=v^s_{L+1}(x,t)+v^s_{L}(x,t)+u^s_{L-1}(x,t)+u^i(x,t)=0
\end{eqnarray}
for $(x,t)\in\Gamma_{j(L+1)}\times\textcolor{r1}{\R}$. Noting that
$j'(L)=j(L+1)$ and using (\ref{vanishing}) and
(\ref{condition21.5}) with $M=L$ we see that $f_{L+1}(x,t)=0$ for
$(x,t)\in\Gamma_{j(L+1)}\times (-\infty,(L-1)\delta_{12}/c]\cup
\Gamma_{12}\times \textcolor{r1}{\R}$, and, thus, \textcolor{r1}{Lemma}~\ref{localpro}
tells us that \begin{equation} v^s_{L+1}(x,t)=0\quad\mbox{for}\quad
(x,t)\in\Gamma_{j'(L+1)}^{\rm tr}\times(-\infty,L\delta_{12}/c]  \end{equation}
---or, in other words, the relations (\ref{condition21.4}) and (\ref{condition21.5})
hold for $M = L+1$. Combining the relation (\ref{condition21.5}) and
the condition (\ref{condition21.4}) for $M=L$, it follows that \begin{eqnarray}
\label{condition21.7}
u^s_{L+1}(x,t)+u^i(x,t)=v^s_{L+1}(x,t)+u^s_{L}(x,t)+u^i(x,t)=0
\end{eqnarray}
for $(x,t)\in\Gamma_{j'(L+1)}^{\rm tr}\times(-\infty,L\delta_{12}/c]$. The relation (\ref{bound_0}) for $M=L+1$ results from (\ref{condition21.6}) and (\ref{condition21.7}), which completes the proof.
\end{proof}

\begin{remark}
  \textcolor{r1}{As detailed in Section~\ref{sec:3.2}, a variant of
    the setting considered in Theorem~\ref{equivalence}, involving
    certain ``extended'' open arcs $\widetilde\Gamma_j$, is utilized
    in the actual numerical implementation we propose. The use of
    extended arcs eliminates numerical accuracy losses that arise from
    the solution singularities that exist at the open-arc
    endpoints. As indicated in that section, the theorem and proof
    remain essentially unchanged.}
\end{remark}

The proposed multiple scattering strategy for the solution of the wave
equation problem (\ref{waveeqn}) for $(x,t)\in\Omega\times (-\infty,T(M)]$
($M=2,3,\dots$), which is embodied in Theorem~\ref{equivalence}, the
associated ping-pong solutions $v_m^s$, and the sum~\eqref{mult_scatt}
($m=1,\cdots,M$), is summarized in Algorithm~\ref{alg0}. Note that, in this algorithm, the necessary solutions $v^s_m(x,t)$ are obtained by means of the hybrid frequency-time approach presented in Section~\ref{sec:2.5}.

\begin{algorithm}[H]
\caption{Multiple scattering algorithm}
\label{alg0}
\begin{algorithmic}[1]
  \STATE Do $m=1,2,\cdots,M$ \STATE \ \ \ \ Evaluate the boundary data
  $f_m$ via relations (\ref{criterion0})-(\ref{criterion2}).  \STATE \
  \ \ \ Compute
  $v^s_{m}(x,t),
  (x,t)\in(\Omega\cup\Gamma_{j'(m)})\times(-\infty,T(M)]$ \textcolor{r1}{in
  Definition~\ref{inductive}} using the open-arc hybrid \\
  \ \ \ \ solver
  presented in Section~\ref{sec:2.5}.  \STATE End Do \STATE Compute
  $u^s_{M}(x,t), (x,t)\in\Omega\times(-\infty,T(M)]$ using
  equation~\eqref{mult_scatt}.
\end{algorithmic}
\end{algorithm}


\subsection{Frequency-domain multiple scattering algorithm}
\label{sec:2.5}

Call $F(\omega)$ the Fourier transform of a function
$f(t)\in L^2(\R)$,
\begin{eqnarray}
\label{forwardFT}
F(\omega)=\mathbb{F}(f)(\omega):=\int_{-\infty}^{+\infty}
f(t)e^{i\omega t}\,dt,  \end{eqnarray}
and let the corresponding inverse Fourier
transform of a frequency-domain function $F\in L^2(\R)$ be denoted by \begin{eqnarray}
\label{backwardFT}
f(t)=\mathbb{F}^{-1}(F)(\omega):=\frac{1}{2\pi}\int_{-\infty}^{+\infty}
F(\omega)e^{-i\omega t}\,d\omega.
\end{eqnarray}
Then, \textcolor{r1}{calling} $\kappa=\omega/c$ the spatial wave number, the Fourier
transform $V^s_{m}(x,\omega)$ of the solution $v^s_{m}(x,t)$ of the
wave equation is a solution of the Helmholtz equation
$\Delta V^s_{m} + \kappa^2 V^s_{m}=0$ in $\Omega_{j(m)}$ with
Dirichlet boundary conditions $V^s_{m}=F_{m}$ on $\Gamma_{j(m)}$ where
$F_{m}(x,\omega)=\mathbb{F}(f_m)(x,\omega)$. As is well known, the
solution $V^s_{m}(x,\omega)$ admits the
representation \begin{eqnarray}
\label{pingpong3}
V^s_{m}(x,\omega)=\mathcal{S}_{j(m)}[\psi_{m}](x,\omega):= \int_{\Gamma_{j(m)}} \Phi_\omega(x,y)\psi_{m}(y)ds_y,\quad x\in\Omega_{j(m)},
\end{eqnarray}
where $\psi_{m}$ is the solution of the integral equation
\begin{eqnarray}
\label{pingpong4}
S_{j(m)}[\psi_{m}]=F_{m}\quad\mbox{on}\quad \Gamma_{j(m)}.
\end{eqnarray}
Here\textcolor{r1}{, using the notations introduced in~\cite{SW84},}
$S_j: \widetilde{H}^{-1/2}(\Gamma_j)\rightarrow H^{1/2}(\Gamma_j)$
denotes the single-layer operator
\begin{equation}
\label{freq_sing}
S_j[\psi](x,\omega):= \int_{\Gamma_j} \Phi_\omega(x,y) \psi(y)ds_y, \quad x\in\Gamma_j,
\end{equation}
where\textcolor{r1}{, calling} $H_0^{(1)}$ the Hankel function of first kind \textcolor{r1}{and order
zero}, $\Phi_\omega(x,y)=\frac{i}{4}H_0^{(1)}(\kappa|x-y|)$ denotes the
fundamental solution \textcolor{r1}{associated with} the Helmholtz equation
$\Delta w+\kappa^2w=0$ in $\R^2$. Our approach relies on the existence
and uniqueness of solution of equation (\ref{pingpong4}), which are
guaranteed by the following theorem.

\begin{theorem}
\label{wellfrequency}
Given $F_{m}\in H^{1/2}(\Gamma_{j(m)})$ the integral equation (\ref{pingpong4}) admits a unique solution in $\widetilde{H}^{-1/2}(\Gamma_j)$ for any frequency $\omega>0$.
\begin{proof} Provided in~\cite{SW84}.\end{proof}
\end{theorem}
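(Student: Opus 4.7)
The plan is to deduce invertibility by combining the Fredholm alternative with a potential-theoretic uniqueness argument, exploiting the fact that for an open arc in $\R^2$ there is no enclosed domain and hence no interior-eigenvalue obstruction.

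First, I would split the operator as $S_j=S_j^{(0)}+K_j$, where $S_j^{(0)}$ is the Laplace single-layer on $\Gamma_j$ with kernel $-\tfrac{1}{2\pi}\log|x-y|$. Since
\ben
\Phi_\omega(x,y)+\tfrac{1}{2\pi}\log|x-y|
\enn
is a smooth function of $(x,y)$ (the logarithmic singularity of the Hankel function is exactly cancelled), the remainder $K_j:\widetilde{H}^{-1/2}(\Gamma_j)\to H^{1/2}(\Gamma_j)$ has a smooth kernel and is therefore compact. The operator $S_j^{(0)}:\widetilde{H}^{-1/2}(\Gamma_j)\to H^{1/2}(\Gamma_j)$ is known to be a topological isomorphism for Lipschitz open arcs, provided the logarithmic capacity of $\Gamma_j$ is not equal to $1$ (a harmless normalization condition that can always be arranged by a rescaling of coordinates, composed with pullback on the Sobolev spaces). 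This is a classical result of Stephan--Wendland already cited by the paper~\cite{SW84,PS88,SW84}. Combining these two facts, $S_j$ is a Fredholm operator of index zero, so it suffices to prove injectivity.

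Second, I would prove injectivity by a standard jump-relation argument. Suppose $\psi\in\widetilde{H}^{-1/2}(\Gamma_j)$ satisfies $S_j[\psi]=0$ on $\Gamma_j$, and define $V(x):=\mathcal{S}_j[\psi](x)=\int_{\Gamma_j}\Phi_\omega(x,y)\psi(y)\,ds_y$ for $x\in\R^2\setminus\Gamma_j$. Then $V$ solves the homogeneous Helmholtz equation in $\R^2\setminus\Gamma_j$ and satisfies the Sommerfeld radiation condition, because $\Gamma_j$ is compact and $\psi$ has compact support. Continuity of the single-layer potential across $\Gamma_j$ gives $V|_{\Gamma_j}=S_j[\psi]=0$. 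Hence $V$ solves the exterior Dirichlet problem for the Helmholtz equation on the complement of the \emph{open arc} $\Gamma_j$. Because $\R^2\setminus\Gamma_j$ is connected and unbounded, there is no enclosed interior domain and thus no Helmholtz eigenfunctions to spoil uniqueness; the standard Rellich--Sommerfeld uniqueness theorem for exterior scattering then forces $V\equiv 0$ in $\R^2\setminus\Gamma_j$. The jump relation for the normal derivative of the single-layer across $\Gamma_j$,
\ben
\psi=\left[\partial_{\nu}V\right]_{\Gamma_j},
\enn
finally yields $\psi=0$, establishing injectivity and thereby, via the Fredholm alternative, the invertibility of $S_j$ for every $\omega>0$.

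The main obstacle I expect is the correct handling of the principal-part step on a Lipschitz (possibly non-smooth) open arc: one must ensure that $S_j^{(0)}$ is an isomorphism between the endpoint-weighted spaces $\widetilde{H}^{-1/2}(\Gamma_j)$ and $H^{1/2}(\Gamma_j)$, which requires either the logarithmic-capacity hypothesis or an explicit rescaling; this is precisely where the analytic subtlety of the 2D case enters, and is what makes the referenced results of~\cite{SW84,PS88} indispensable. Once this is granted, the rest of the argument is standard and requires no further geometric assumptions on $\Gamma_j$ beyond Lipschitz regularity.
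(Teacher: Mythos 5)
The paper offers no proof of its own---it defers entirely to the Stephan--Wendland reference \cite{SW84}---and your Fredholm-decomposition-plus-potential-theoretic-uniqueness argument (principal part $S_j^{(0)}$, compact remainder, Rellich uniqueness on the connected complement of the arc, normal-derivative jump) is precisely the standard route taken in that reference, so your approach matches the intended one and is essentially correct, including your correct flagging of the logarithmic-capacity normalization needed for the invertibility of $S_j^{(0)}$ in two dimensions. One small inaccuracy: the difference kernel $\Phi_\omega(x,y)+\frac{1}{2\pi}\log|x-y|$ is continuous but \emph{not} smooth (its expansion contains terms of the form $|x-y|^2\log|x-y|$), though this does not affect your conclusion, since such a weakly singular kernel still defines a compact operator from $\widetilde{H}^{-1/2}(\Gamma_j)$ to $H^{1/2}(\Gamma_j)$ because it smooths by strictly more than one order.
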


In view of Definition~\ref{inductive}, the boundary function $F_m$ is
determined inductively by the relations
\begin{equation}
  \label{criterion3}
  F_1(x,\omega)= -\mathbb{F}(u^i)(x,\omega)\quad\mbox{on}\quad\Gamma_1,\quad F_2(x,\omega)= -\mathbb{F}(u^i)(x,\omega)-V^s_{1,m-1}(x,\omega)\quad\mbox{on}\quad\Gamma_2,
\end{equation}
and,   \begin{equation}
  \label{criterion4}
  F_m(x,\omega)=-V^s_{m-1}(x,\omega)\quad\mbox{on}\quad\Gamma_{j(m)}, \quad m \geq 3.
\end{equation}
The frequency-domain component of the proposed frequency-time hybrid
multiple scattering algorithm, which produces the solutions $v_m^s$
for $m=1,\cdots,M$, is obtained by re-expressing Algorithm~\ref{alg0}
via an application of the Fourier transform. The result is
Algorithm~\ref{alg1} below.
\begin{algorithm}[H]
\caption{Hybrid multiple scattering algorithm}
\label{alg1}
\begin{algorithmic}[1]
\STATE Do $m=1,2,\cdots,M$
\STATE \ \ \ \ Evaluate the boundary data $F_m$ via relations (\ref{criterion3})-(\ref{criterion4}).
\STATE \ \ \ \ Solve the integral equation (\ref{pingpong4}) with solution $\psi_{m}$.
\STATE \ \ \ \ Compute $V^s_{m}(x,\omega), (x,\omega)\in(\Omega\cup\Gamma_{j'(m)})\times \textcolor{r1}{\R}$ via (\ref{pingpong3}).
\STATE \ \ \ \ Compute $v^s_{m}(x,t)=\mathbb{F}^{-1}(V^s_{m})(x,t), (x,t)\in(\Omega\cup\Gamma_{j'(m)})\times[0,T(M)]$.
\STATE End Do
\end{algorithmic}
\end{algorithm}

\subsection{Windowing and time-recentering}
\label{sec:2.6}

For a given signal $f_{m}(x,t)$, the time-domain open-arc solution
described in Section~\ref{sec:2.5} is obtained via the
following sequence of operations: \begin{equation}
\label{openarcprocess}
f_{m}(x,t)\xrightarrow{\mathbb{F}} F_{m}(x,\omega) \xrightarrow{(\ref{pingpong3}),(\ref{pingpong4})} V^s_{m}(x,\omega) \xrightarrow{\mathbb{F}^{-1}} v^s_{m}(x,t).
\end{equation}
Clearly, the function $f_{m}(x,t)$ may represent a signal of
arbitrarily long duration: this is merely a smooth compactly supported
function for $t\in[0,T]$, with a potentially large value of
$T>0$. For such large values of $T$ the Fourier transform
$F_{m}(x,\omega)$ is generally a highly oscillatory function of
$\omega$, as a result of the fast oscillations in the Fourier
transform integrand factor $e^{i\omega t}$---see
e.g. \cite[Fig. 1]{ABL18}. Under such a scenario a very fine
frequency-discretization, requiring $\mathcal{O}(T)$ frequency points,
and, thus, a number $\mathcal{O}(T)$ of evaluations of the
frequency-domain boundary integral equation solver, is required to
obtain the time-domain solution $v^s_{m}(x,t)$. This makes the overall
algorithm unacceptably expensive for long-time simulations. To
overcome these difficulties, a certain ``windowing and
time-recentering'' procedure was proposed in \cite[Sec. 3.1]{ABL18},
that decomposes a scattering problem involving an incident time signal
of long duration into a sequence of problems with smooth incident
field of a limited duration, all of which can be solved in terms of a
fixed set of solutions of the corresponding frequency-domain problems
for arbitrarily large values of $T$.

\begin{figure}[htb]
\centering
\includegraphics[scale=0.2]{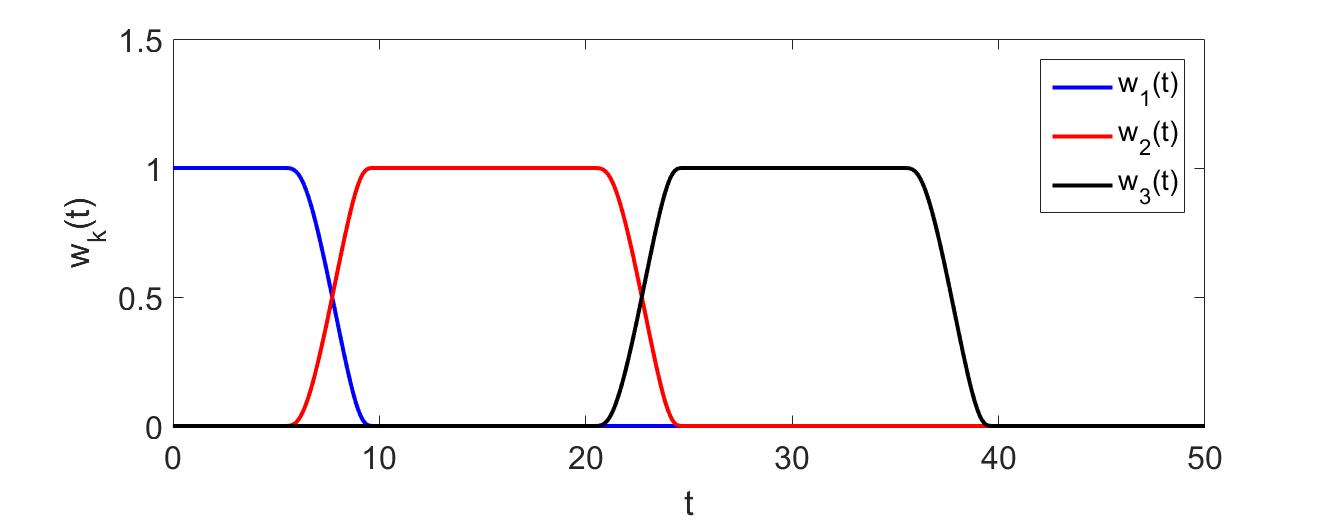}
\caption{Windowing functions $w_k(t), k=1,2,3$ with $H=10$.}
\label{timewindow}
\end{figure}

For a given final time $T$, the windowing-and-recentering approach is
based on use of a smooth partition of unity
$\mathcal{P}=\{\chi_k(t)\ |\ k\in\mathcal{K}\},
\mathcal{K}=\{1,\cdots,K\}$, where the functions $\chi_k$ satisfy $\sum_{k\in\mathcal{K}}\chi_k(t)\ = 1$ for $t\in[0,T]$ and \textcolor{r1}{where}, for a certain sequence $s_k$ ($k\in\mathcal{K}$), each $\chi_k(t)$ is a non-negative,
smooth windowing function of $t$, supported in the interval
$[s_k-H,s_k+H]$ of duration $2H$. The partition-of-unity $\mathcal{P}$ can be generated on
the basis of the smooth function $\eta(t;t_0,t_1)$ given by \begin{eqnarray}
\label{eta}
\eta(t;t_0,t_1)=\begin{cases}
1, & |t|\le t_0, \cr
e^{\frac{2e^{-1/s}}{s-1}}, & t_0<|t|<t_1, s=\frac{|t|-t_0}{t_1-t_0}, \cr
0, & |t|\ge t_1.
\end{cases}
\end{eqnarray}
Without loss of generality, in this work we set
\begin{equation}
H=\frac{T}{3K/2-1}, \quad s_k=\frac{3}{2}(k-1)H,
\end{equation}
and
\begin{equation*}
\chi_k(t)=\chi(t-s_k),\quad \chi(s)=\begin{cases}
\eta(s/H;1/2,1), & -H/2\le s\le H, \cr
1- \eta(s/H+3/2;1/2,1), & -H< s<-H/2, \cr
0, & |s|\ge H,
\end{cases}
\end{equation*}
---a prescription that  clearly ensures that $s_K+H/2=T$ and $\sum_{k=1}^K \chi_k(t)=1$ for all
$t\in[0,T]$. A depiction of such a partition of unity, with $H=10$, is
presented in Figure~\ref{timewindow}.

Utilizing the partition-of-unity $\mathcal{P}$, any smooth long-time
signal $f(t)$, $t\in[0,T]$, can be expressed in the form
\begin{equation}
  f(t)=\sum_{k\in\mathcal{K}} f_k(t), \quad f_k(t)=f(t)\chi_k(t),
\end{equation}
where $f_k$ is compactly supported in $[s_k-H,s_k+H]$. The
corresponding Fourier transform is then given by
\begin{equation}
F(\omega)=\sum_{k\in\mathcal{K}} F_k(\omega),\quad
F_k(\omega)=\int_{0}^{T_0} f_k(t)e^{i\omega t}dt= e^{i\omega
  s_k}F_{k,slow}(\omega),
\end{equation}
where, defining by
\ben
\mathbb{F}_{k,slow}(f)(\omega):=\int_{-H}^H
  f(t+s_k)\chi_k(t)e^{i\omega t}dt,
\enn
the $s_k$-centered slow Fourier-transform operator, we call
$F_{k,slow}(\omega)= \mathbb{F}_{k,slow}(f)(\omega)$; note that, as suggested by the notation used, $F_{k,slow}$ is a slowly-oscillatory function of $\omega$.

For $k\in\mathcal{K}$, we now call $F_{m,k}=\mathbb{F}_{k,slow}(f_{m})$ the slow Fourier-transform of the $m$-th iterate $f_m$, and we let
\begin{eqnarray}
\label{sol3}
V^s_{m,k}(x,\omega)=\mathcal{S}_{j(m)}[\psi_{m,k}](x,\omega),
\end{eqnarray}
where $\psi_{m,k}$ is the solution of the integral equation
\begin{eqnarray}
\label{BIE3}
S_{j(m)}[\psi_{m,k}]=F_{m,k}\quad\mbox{on}\quad \Gamma_{j(m)}.
\end{eqnarray}
It follows that
\begin{equation}
F_{m}(x,\omega)=\sum_{k\in\mathcal{K}} e^{i\omega s_k}F_{m,k}(x,\omega),
\end{equation}
and
\begin{eqnarray}
\label{seriessol2}
  v^s_{m}(x,t)=\sum_{k\in\mathcal{K}}
  \mathbb{F}^{-1}(V^s_{m,k})(x,t-s_k); \end{eqnarray} note that
$\mathbb{F}^{-1}(V^s_{m,k})(x,t)=0$ for $t<s_k-H$. Adopting the
time-recentering strategy described in this section,  Algorithm~\ref{alg1} leads to the more
efficient Algorithm~\ref{alg11}.

\begin{algorithm}
\caption{Hybrid multiple scattering algorithm with time-recentering}
\label{alg11}
\begin{algorithmic}[1]
\STATE Do $m=1,2,\cdots,M$
\STATE \ \ \ \ Evaluate the boundary data $f_m(x,t), (x,t)\in\Gamma_{j(m)}\times[0,s_K+H]$ via relations (\ref{criterion0})-(\ref{criterion2}).
\STATE \ \ \ \ Set $v^s_{m}(x,t)=0, (x,t)\in(\Omega\times[0,T])\cup(\Gamma_{j'(m)}\times[0,s_K+H])$.
\STATE \ \ \ \ Do $k=1,2,\cdots,K$
\STATE \ \ \ \ \ \ \ \ Evaluate the boundary data $F_{m,k}(x,\omega)=\mathbb{F}_{k,slow}(f_m)(x,\omega), (x,\omega)\in\Gamma_{j(m)}\times \textcolor{r1}{\R}$.
\STATE \ \ \ \ \ \ \ \ Solve the integral equation (\ref{BIE3}) with solution $\psi_{m,k}$.
\STATE \ \ \ \ \ \ \ \ Compute $V^s_{m,k}(x,\omega), (x,\omega)\in(\Omega\cup\Gamma_{j'(m)})\times \textcolor{r1}{\R}$ through (\ref{sol3}).
\STATE \ \ \ \ \ \ \ \ Compute $v^s_{m}(x,t)+=\mathbb{F}^{-1}(V^s_{m,k})(x,t-s_k), (x,t)\in(\Omega\times[0,T])\cup(\Gamma_{j'(m)}\times[0,s_K+H])$.
\STATE \ \ \ \ End Do
\STATE End Do
\end{algorithmic}
\end{algorithm}

\section{Hybrid multiple scattering strategy: numerical implementation}
\label{sec:3}

This section presents algorithms necessary for the numerical
implementation of the hybrid multiple scattering strategy introduced
in Algorithm~\ref{alg11}, including algorithms for accurate evaluation
of layer potentials, boundary integral operators, and  inverse Fourier transforms of certain singular functions.

\subsection{Fourier transform algorithm}
\label{sec:3.1}

Recalling the forward and inverse Fourier transform expressions (\ref{forwardFT}) and (\ref{backwardFT}),
we note that, for smooth and compactly supported functions $f$, the corresponding Fourier transforms $F$ decay superalgebraically fast (i.e., faster than any negative power of $\omega$) as
$\omega\rightarrow\pm\infty$. Thus, the errors in approximation \begin{eqnarray}
\label{backwardFT1}
f(t)\approx \frac{1}{2\pi}\int_{-W}^{W} F(\omega)e^{-i\omega t}d\omega \end{eqnarray}
decays super-algebraically fast as $W\rightarrow\infty$ \textcolor{r1}{in $H^s([0,T])$-norm, for any $s\ge 0$, as it follows easily by iterated integration by parts}:
the infinite-domain Fourier transform integral can be replaced by the corresponding integral over a finite interval with superalgebraically small errors.
As is known~\cite{M65,W86}, however, the frequency-domain solutions of
the Helmholtz equation in two dimensions vary as an integrable
function of $\log\omega$ which vanishes at $\omega=0$, and, thus, the
integration process requires some care to produce the needed integrals
with high-order accuracy. To do this, in what follows we employ the
recently developed Fourier-continuation (FC) based
approach~\cite{ABL18} for the numerical evaluation of such singular inverse
Fourier transform integrals.

Thus, utilizing a decomposition of the form \begin{eqnarray}
\label{backwardFT2}
f(t)=\frac{1}{2\pi}\left(\int_{-W}^{-w_c}+\int_{-w_c}^{w_c}+\int_{w_c}^{W}\right) F(\omega)e^{-i\omega t}d\omega,
\end{eqnarray}
we only need to consider 1)~Integrals of the form
\begin{eqnarray}
\label{int1}
I_a^b[F](t)=\int_{a}^{b} F(\omega)e^{-i\omega t}d\omega,
\end{eqnarray}
where $F$ is a smooth non-periodic function, and 2) The half-interval integrals
\begin{eqnarray}
\label{int2}
I_0^{w_c}[F](t)=\int_{0}^{w_c} F(\omega)e^{-i\omega t}d\omega\quad\mbox{and}\quad I_{-w_c}^0[F](t)=\int_{-w_c}^0 F(\omega)e^{-i\omega t}d\omega,
\end{eqnarray}
where $F(\omega)$ contains a logarithmic singularity at $\omega=0$.

To treat the integral $I_a^b[F](t)$, we re-express it in the form
\begin{eqnarray}
\label{int3}
I_a^b[F](t)=e^{-i\delta t}\int_{-A}^{A} F(\omega+\delta)e^{-i\omega
  t}d\omega,\quad \delta=\frac{a+b}{2},\quad A=\frac{b-a}{2}.  \end{eqnarray}
Although $F(\omega+\delta)$ is not a periodic function of $\omega$
in the integration interval $[-A,A]$,  it can be approximated, in this interval, by a
Fourier-continuation trigonometric polynomial~\cite{BL10}
\begin{eqnarray}
\label{fcpoly}
  F(\omega+\delta)=\sum_{m=-\widetilde{L}/2}^{\widetilde{L}/2-1} c_me^{i\frac{2\pi}{P}m\omega}
\end{eqnarray}
of a certain periodicity $P$, with high-order convergence as $\widetilde{L}$ grows. Indeed, an accurate
Fourier approximation of a certain period $P>2A$ can be obtained on
the basis of the FC(Gram) Fourier Continuation method~\cite{AB16,BL10}
from which the approximation errors decay as a user-prescribed
negative power of $\widetilde{L}$. Substituting (\ref{fcpoly}) into
(\ref{int3}) and integrating term-wise gives the approximation
\begin{eqnarray}
\label{int1result}
I_a^b[F](t) &=& e^{-i\delta t}\sum_{m=-\widetilde{L}/2}^{\widetilde{L}/2-1} c_m\int_{-A}^{A} e^{-i\frac{2\pi}{P}(\alpha t-m)\omega}d\omega \nonumber\\
&=& e^{-i\delta t}\sum_{m=-\widetilde{L}/2}^{\widetilde{L}/2-1} c_m \frac{P}{\pi(\alpha t-m)}\sin\left(\frac{2\pi A}{P}(\alpha t-m)\right),
\end{eqnarray}
with errors that are uniform in the time variable $t$. For a given user-prescribed equi-spaced
time-evaluation grid $\{t_n=n\Delta t\}_{n=N_1}^{N_2}$, the quantities
$I_a^b[F](t_n)$ can be obtained via an FFT-accelerated evaluation
of scaled discrete convolutions, see Section 4.1.2 in~\cite{ABL18} for
more details. But here, for simplicity, we evaluate the quantities
$I_a^b[F](t_n)$ directly.


In order to evaluate the integral $I_0^{w_c}[F](t)$ at fixed cost for arbitrarily \textcolor{r1}{large}
times $t$, in turn, we utilize a certain modified ``Filon-Clenshaw-Curtis'' high-order quadrature
approach  developed in~\cite{ABL18} which relies on a graded set
\ben
\left\{\mu_j=w_c\left(\frac{j}{Q}\right)^q, j=1,\cdots,Q\right\},
\enn
of points in the interval $(0,w_c)$ and associated integration subintervals $(\mu_j,\mu_{j+1}), j=1,\cdots,Q$. The integral $I_0^{w_c}[F](t)$ is thus approximated in accordance with the expression
\ben
I_0^{w_c}[F](t)=\sum_{j=1}^{Q-1} I_{\mu_j}^{\mu_{j+1}}[F](t),
\enn
in which the integral $I_{\mu_j}^{\mu_{j+1}}[F](t)$ is obtained via the  Clenshaw-Curtis
quadrature rule. This algorithm results in high-order convergence in spite of the logarithmic singular character of the function $F$. In detail, letting $n_{\mathrm{ch}}$ denote the selected number of Clenshaw-Curtis mesh points and assuming that $q>n_{\mathrm{ch}}+1$, the errors resulting from this approximation strategy decay as $\mathcal{O}(Q^{-(n_{\mathrm{ch}}+1)})$ as $n_{\mathrm{ch}}\rightarrow+\infty$.

Algorithm~\ref{alg11} also requires the evaluation of the Fourier transform
\ben
\mathbb{F}_{k,slow}(f)(\omega):=\int_{-H}^H
  f(t+s_k)\chi_k(t)e^{i\omega t}dt
\enn
for the smooth boundary-values function $f$. This computation proceeds in a manner analogous to that used for the evaluation of $I_{w_c}^W[F](t)$, except that, instead of Fourier continuation approximation of the function $F$ used in that case, here a regular Fourier expansion
\be
\label{fpoly2}
f(t+s_k)\chi_k(t)=\sum_{m=-\widehat{L}/2}^{\widehat{L}/2-1} \widehat{c}_m^ke^{i\frac{\pi}{H}mt},
\en
of periodicity interval $[-H,H]$, is used---which results in high-order convergence on account of the smooth vanishing of the function $\chi_k(t)$ at the endpoints of the interval $[-H,H]$. \textcolor{r1}{The approximation of $\mathbb{F}_{k,slow}(f)(\omega)$ is then obtained  via an expression analogous to (\ref{int1result})---with uniform errors  for all $\omega\in\mathbb{R}$, which are determined solely by the error in the approximation (\ref{fpoly2}).}

\subsection{Layer-potentials and integral-operator evaluations}
\label{sec:3.2}

The numerical implementation of the hybrid multiple scattering
strategy additionally requires  evaluation of the layer-potentials
$\mathcal{S}_j$, $j=1,2$  and the integral operators $S_j$, $j=1,2$ (equations (\ref{pingpong3})and (\ref{freq_sing}), respectively), both of which can be expressed as integrals of the form
\begin{eqnarray}
\label{singular2}
\mathcal{H}(x,\omega)&=&\int_{\Gamma_j} \Phi_\omega(x,y)\psi(y,\omega)ds_y, \quad x\in\Gamma_j,\;\;\textcolor{r1}{x\in\Omega}\;\;\mbox{or}\;\; \textcolor{r1}{x\in\Gamma\backslash\Gamma_j},
\end{eqnarray}
for certain densities $\psi(y,\omega)$. Depending on the location of
observation point $x$, the integral $\mathcal{H}(x,\omega)$ may be
weakly-singular, nearly-singular or non-singular. The numerical
evaluation of $\mathcal{H}(x,\omega)$ with high accuracy can be
achieved by means of a suitably modified version of the
two-dimensional Chebyshev-based rectangular-polar discretization
method~\cite{BY21} (cf.~\cite{BG18}) which adequately accounts for the
singular character of the unknown potential $\psi$ at the endpoints of
the open-arcs $\Gamma_j$. In detail~\cite{CDD03}, the density function
$\psi$ can be expressed in the forms $\psi=\alpha/w$ near the
endpoints where $\alpha$ is a smooth function and $w\sim d_j^{1/2}$
\textcolor{r1}{where} $d_j$ denotes the distance to the endpoint of $\Gamma_j$. Then a
special change of variables introduced in \cite[Eq. (4.12)]{BY20} (see
also~\cite{At91,BL12}), \textcolor{r1}{which eliminates the $1/w$ singularity, is utilized here} to evaluate the integrals
$\mathcal{H}(x,\omega)$ with \textcolor{r1}{high-order accuracy}.

\begin{figure}[htbp]
\centering
\includegraphics[scale=0.45]{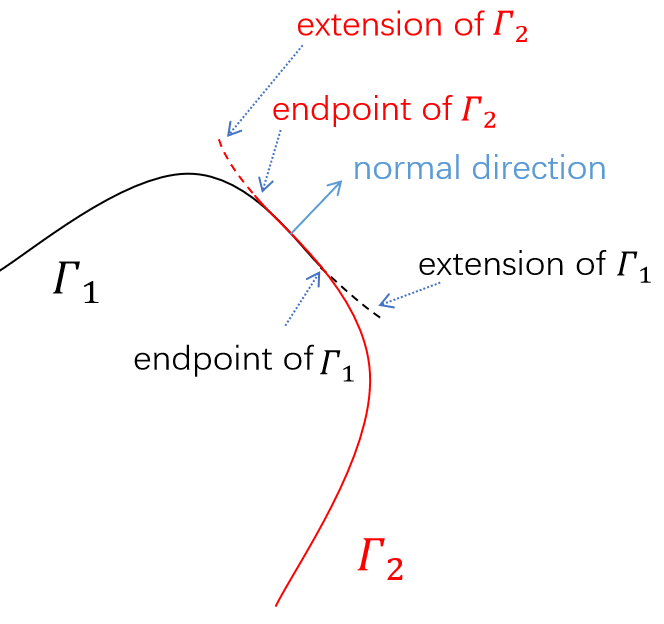}
\caption{Extended arcs utilized in the numerical implementation.}
\label{extension}
\end{figure}

In view of the aforementioned density singularity, it can be easily
shown that the collected boundary data $F_{m}$ ($m\ge 1$) is also
singular: it behaves like $d_{j(m)}^{1/2}$ near the endpoints of the
arc $\Gamma_{j(m)}$. Graded meshes near the endpoints could be
employed to ensure high-order accuracy in the solution of the
associated boundary integral equations. But a different approach is
utilized in this paper, whereby the edge singularity in the boundary
data may be entirely avoided by slightly and smoothly extending the
boundary $\Gamma_j$ in the direction normal to $\Gamma_j$---as
illustrated in Figure~\ref{extension}. More precisely, letting
$x=x(s)$ and $\nu=\nu(s)$ denote a parametrization of
\textcolor{r1}{$\Gamma =\Gamma_1\cup \Gamma_2$} and its normal vector, respectively,
the curve $\Gamma_j$ is prolonged \textcolor{r1}{beyond its endpoints} into an
extended open arc $\widetilde{\Gamma}_j$. \textcolor{r1}{Here} the extension arc,
denoted by $\Gamma_j^e=\widetilde{\Gamma}_j\backslash\Gamma_j$, is
given by
\begin{equation} y(s)=x(s)+a(s)\cdot\nu(s)
\end{equation}
\textcolor{r1}{for $s$ in a neighborhood beyond each parameter value $s=s_0$
corresponding to an endpoint of $\Gamma_j$. In order to ensure
sufficient smoothness, leading to high-order accuracy, a certain
number of derivatives of the function $a$ are required to vanish at
$s=s_0$.} \textcolor{r1}{The extended arcs $\widetilde{\Gamma}_j$ used are such that their endpoints are far from the region where the corresponding fields must be evaluated. Since the problems for the extended open-arcs are handled with high accuracy (by means of the numerical method~\cite{BG18,BY21}), and since the corresponding solutions are evaluated away from the extended-arc singular points, the difficulties arising from endpoint singularities are completely eliminated.}
\textcolor{r1}{\begin{remark}
\label{incext}
The incident field $u^i$ can easily be extended to each one of the two curves $\widetilde{\Gamma}_j$, $j=1,2$; the corresponding extensions will be denoted in what follows by $u^i_j$, $j=1,2$. The necessary extensions can be obtained either by simply evaluating on the extended curves an incident field function $u^i$ defined in all of $\R^2$, whenever, as is often the case, such a function is provided, or, alternatively, by using a Sobolev extension theorem such as \cite[Theorem 3.10]{Ne12} on each curve $\widetilde{\Gamma}_j$.
\end{remark}}

\textcolor{r1}{This extension procedure, which provides great flexibility, does not
negatively affect any aspect of the proposed multiple scattering
algorithm. Indeed, letting $\widetilde{\Omega}_j=\R^2\backslash\widetilde{\Gamma}_j$ and considering the wave equation problem
\begin{eqnarray}
\label{eq_op11}
\begin{cases}
\frac{\pa^2\widetilde{w}^s_{j}}{\pa t^2}(x,t)-c^2\Delta
\widetilde{w}^s_{j}(x,t)=0, &(x,t)\in\widetilde{\Omega}_j\times \R, \cr
\widetilde{w}_j^s(x,t)=\widetilde{g}_j(x,t),  & (x,t)\in\widetilde{\Gamma}_j\times\R,
\end{cases}
\end{eqnarray}
we have the following result analogous to Lemma~\ref{localpro}.
\begin{lemma}
\label{localpro11}
Let $j\in\{1,2\}$, $p\in\R$, and $T_0>0$, and assume that, for
$j\in\{1,2\}$, (a)~$\widetilde{g}_j\in H_{\sigma,T_0}^{p}(\R,H^{1/2}(\widetilde{\Gamma}_j))$
satisfies
\begin{equation}\label{ini_set11}
  \widetilde{g}_j(x,t)=0\quad\mbox{for}\quad
  (x,t)\in(\Gamma_{12}\cup\Gamma_j^e)\times \R;
\end{equation}
and, (b)~$\widetilde{\Gamma}_j$ satisfies Condition~\ref{localpro0}. Then,
recalling equation~\eqref{dist}, letting $t_0=\delta_{12}/c>0$, and
calling
$\widetilde{w}_j^s\in H_{\sigma,T_0}^{p-3}(\R,H^1(\widetilde{\Omega}_j))$ the
  unique solution of the wave equation problem \eqref{eq_op11}, we have
\begin{equation}\label{ini_set_res11}
  \widetilde{w}_j^s(x,t)=0\quad\mbox{for}\quad
  (x,t)\in \Gamma_{j'}^{\rm tr} \times (-\infty,T_0+t_0].
\end{equation}
\end{lemma}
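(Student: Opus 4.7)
The plan is to obtain Lemma~\ref{localpro11} as a direct adaptation of Lemma~\ref{localpro} to the extended-arc geometry, using time-translation invariance of the wave equation together with a single application of the Huygens-like Condition~\ref{localpro0} to the curve $\widetilde{\Gamma}_j$. First, since $\widetilde{g}_j\in H_{\sigma,T_0}^{p}(\R,H^{1/2}(\widetilde{\Gamma}_j))$ vanishes for $t\leq T_0$, I would reduce to the case $T_0=0$ by setting $\widehat{g}_j(x,t)=\widetilde{g}_j(x,t+T_0)$ and letting $\widehat{w}_j^s$ be the unique solution of \eqref{eq_op11} with data $\widehat{g}_j$; by time-translation invariance of the wave equation and uniqueness (Theorem~\ref{welltime}(b) with $\Omega_j$ replaced by $\widetilde\Omega_j$), we have $\widetilde{w}_j^s(x,t)=\widehat{w}_j^s(x,t-T_0)$, so it suffices to prove that $\widehat{w}_j^s(x,t)=0$ on $\Gamma_{j'}^{\rm tr}\times(-\infty,t_0]$.

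Next, I would identify the effective support of $\widehat{g}_j$. By hypothesis (a), $\widehat{g}_j$ vanishes identically on $(\Gamma_{12}\cup\Gamma_j^e)\times\R$, so for every $t$ the spatial support of $\widehat{g}_j(\cdot,t)$ is contained in the truncated arc $\Gamma_j^{\rm tr}=\Gamma_j\backslash\Gamma_{12}$. Crucially, by construction of the extension $\Gamma_j^e$ in Section~\ref{sec:3.2}, the arc $\Gamma_j^{\rm tr}$ lies strictly away from the two endpoints of $\widetilde{\Gamma}_j$, and the extension length can be chosen (without affecting the algorithm) so that
\begin{equation*}
\mathrm{dist}(\Gamma_j^{\rm tr},\{\mbox{endpoints of }\widetilde{\Gamma}_j\})\ \geq\ \delta_{12}.
\end{equation*}
Choosing any Lipschitz sub-arc $\mathcal{C}^{\mathrm{inc}}\subseteq\widetilde{\Gamma}_j$ that contains $\Gamma_j^{\rm tr}$ and still satisfies $\mathrm{dist}(\mathcal{C}^{\mathrm{inc}},\{\mbox{endpoints of }\widetilde{\Gamma}_j\})\geq\delta_{12}$, the boundary data $\widehat{g}_j$ satisfies the vanishing assumption \eqref{DoI_ball_ass} relative to this $\mathcal{C}^{\mathrm{inc}}$.

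With these preparations in place, I would invoke Condition~\ref{localpro0} (valid for $\widetilde{\Gamma}_j$ by hypothesis (b)) to conclude that $\widehat{w}_j^s(x,t)=0$ for every $x\notin\Lambda^s(t)$ and every $t\leq c^{-1}\mathrm{dist}(\mathcal{C}^{\mathrm{inc}},\{\mbox{endpoints of }\widetilde{\Gamma}_j\})$, where $\Lambda^s(t)=\{x\in\R^2:\mathrm{dist}(x,\mathcal{C}^{\mathrm{inc}})\leq ct\}$. For $x\in\Gamma_{j'}^{\rm tr}$ and $t\leq t_0=\delta_{12}/c$, the definition \eqref{dist} of $\delta_{12}$ yields
\begin{equation*}
\mathrm{dist}(x,\mathcal{C}^{\mathrm{inc}})\ \geq\ \mathrm{dist}(\Gamma_{j'}^{\rm tr},\Gamma_j^{\rm tr})\ =\ \delta_{12}\ \geq\ ct,
\end{equation*}
so that $x\notin\Lambda^s(t)$ and hence $\widehat{w}_j^s(x,t)=0$. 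Undoing the time shift gives $\widetilde{w}_j^s(x,t)=0$ for $(x,t)\in\Gamma_{j'}^{\rm tr}\times(-\infty,T_0+t_0]$, which is \eqref{ini_set_res11}.

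The argument is essentially routine once the right $\mathcal{C}^{\mathrm{inc}}$ is identified; I do not expect any serious analytic obstacle beyond Lemma~\ref{localpro}. The only mildly delicate point is the geometric one in the second paragraph: one must verify that the extended arcs $\widetilde{\Gamma}_j$ can indeed be chosen so that the endpoints of $\widetilde{\Gamma}_j$ are at distance $\geq\delta_{12}$ from $\Gamma_j^{\rm tr}$. This is a design choice that can always be met, and it is precisely the role of the extension to provide this safety margin so that the endpoint singularities play no part in the effective propagation of information along the boundary over the time window of interest.
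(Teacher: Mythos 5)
Your argument is correct and is essentially the route the paper intends: the paper states Lemma~\ref{localpro11} (like Lemma~\ref{localpro}) without proof, as a direct specialization of Condition~\ref{localpro0} to $\widetilde{\Gamma}_j$ with the effective incident sub-arc $\mathcal{C}^{\mathrm{inc}}=\ov{\Gamma_j^{\rm tr}}$, combined with the time translation by $T_0$ and the distance bookkeeping $\mathrm{dist}(\Gamma_{j'}^{\rm tr},\Gamma_j^{\rm tr})=\delta_{12}$ that you carry out explicitly. The only cosmetic point is that you should take $\mathcal{C}^{\mathrm{inc}}$ equal to $\ov{\Gamma_j^{\rm tr}}$ rather than any larger sub-arc, since enlarging it could spoil the inequality $\mathrm{dist}(x,\mathcal{C}^{\mathrm{inc}})\geq\delta_{12}$ for $x\in\Gamma_{j'}^{\rm tr}$; with that choice, and the (correctly identified) design requirement that the extended endpoints stay at distance at least $\delta_{12}$ from $\Gamma_j^{\rm tr}$, the proof is complete.
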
}

\textcolor{r1}{Definition~\ref{inductive}, in turn, needs to be adjusted as follows.
\begin{definition}\label{inductive11}
  For $m\in\mathbb{N}$ we inductively define $\widetilde{v}^s_m(x,t)$ as equal to
  the solution $\widetilde{w}^s_{j(m)}(x,t)$ of the open-arc
  problem~\eqref{eq_op11} with boundary data
  \begin{equation}\label{bdry_m11}
    \widetilde{v}^s_m(x,t) = \widetilde{f}_m(x,t)\quad \mbox{for}\quad (x,t)\in\widetilde{\Gamma}_{j(m)}\times \R,\quad (m \in\mathbb{N}),
  \end{equation}
  where $\widetilde{f}_{m}(x,t):\widetilde{\Gamma}_{j(m)}\times \R\to\mathbb{C}$ denotes the causal
  functions defined inductively via the relations
    \begin{equation}
  \label{criterion011}
  \widetilde{f}_1(x,t)= -u_1^i(x,t)\quad\mbox{on}\quad\widetilde\Gamma_1,
\end{equation}
\begin{equation}
  \label{criterion111}
  \widetilde{f}_2(x,t)=\begin{cases} -u_2^i(x,t)-\widetilde{v}^s_1(x,t), &  x\in\Gamma_2,\cr
  0,  & x\in\Gamma_2^e,
  \end{cases}
\end{equation}
and,   \begin{equation}
  \label{criterion211}
  \widetilde{f}_m(x,t)=\begin{cases}
  -\widetilde{v}^s_{m-1}(x,t), &  x\in\Gamma_{j(m)},\cr
  0,  & x\in\Gamma_{j(m)}^e,
  \end{cases} \quad m \geq 3.
\end{equation}
\end{definition}}

\textcolor{r1}{The new inductive relations give rise to the following slightly modified version of Theorem~\ref{equivalence}.
\begin{theorem}
\label{equivalence11}
Let $M\in\mathbb{N}$, $M\geq 2$, $p\in\R$, and let
  $T = T(M) = (M-1) \delta_{12}/c$. Denote an $M$-th order multiple-scattering sum
\begin{equation}\label{mult_scatt11}
  \widetilde{u}^s_M(x,t):=\sum_{m=1}^M
  \widetilde{v}^s_{m}(x,t),
\end{equation}
which includes contributions from all $M$ ``ping-pong'' scattering
iterates $\widetilde{v}^s_m(x,t)$ with $m=1,\dots, M$. Then, given $u^i\in H_{\sigma,0}^{p}(\R,H^{1/2}(\Gamma))$ and
  $u_j^i\in H_{\sigma,0}^{p}(\R,H^{1/2}(\widetilde{\Gamma}_j))$, $j=1,2$ as indicated in Remark~\ref{incext}, we have
  $\widetilde{u}^s_M\in H_{0}^{p-3/2}(T(M),H^1(\Omega))$ and
  $u^s(x,t) =\widetilde{u}^s_M(x,t)$ for all
  $(x,t)\in\Omega\times (-\infty,T(M)]$.
\end{theorem}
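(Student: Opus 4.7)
The plan is to proceed by induction on $M$, mirroring the proof of Theorem~\ref{equivalence} in the extended-arc setting. First I would address regularity: applying Theorem~\ref{welltime}(b) inductively to the extended-arc problem~\eqref{eq_op11} shows that $\widetilde{f}_m \in H_{\sigma,0}^{p-3(m-1)}(\R, H^{1/2}(\widetilde{\Gamma}_{j(m)}))$ and $\widetilde{v}^s_m \in H_{\sigma,0}^{p-3m}(\R, H^1(\widetilde{\Omega}_{j(m)}))$ for all $m$, so that the restriction of $\widetilde{u}^s_M$ to $\Omega$ lies in $H_{\sigma,0}^{p-3M}(\R, H^1(\Omega))$. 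As in the original argument, the sharper regularity claim $\widetilde{u}^s_M|_{(-\infty,T(M)]} \in H_0^{p-3/2}(T(M), H^1(\Omega))$ will follow from Theorem~\ref{welltime}(a) once the boundary identity
\begin{equation}
\widetilde{u}^s_M(x,t) + u^i(x,t) = 0, \qquad (x,t) \in \Gamma \times (-\infty, T(M)],
\end{equation}
is established, since this identity together with uniqueness of the interior problem~(\ref{waveeqn1}) forces $\widetilde{u}^s_M = u^s$ on $\Omega \times (-\infty, T(M)]$.

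The core of the proof is thus the boundary identity, which I would prove by induction on $M$. The key preparatory observation, analogous to Remark~\ref{vanish12}, is that $\widetilde{f}_m$ vanishes on $\Gamma_{12} \cup \Gamma_{j(m)}^e$ for all $m \geq 2$: vanishing on $\Gamma_{j(m)}^e$ is built into Definition~\ref{inductive11}, while vanishing on $\Gamma_{12}$ must be obtained inductively. Indeed, on $\Gamma_{12} \subset \Gamma_1$ one has $\widetilde{v}^s_1 = -u^i_1 = -u^i$, giving $\widetilde{f}_2 = -u^i - \widetilde{v}^s_1 = 0$ on $\Gamma_{12}$, and from $m \geq 3$ onward the cancellation $\widetilde{f}_m = -\widetilde{v}^s_{m-1} = -\widetilde{f}_{m-1} = 0$ on $\Gamma_{12}$ propagates by induction. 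This companion induction is what unlocks Lemma~\ref{localpro11} at each step.

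With this in hand, the base case $M=2$ splits $\Gamma$ as $\Gamma_2 \cup \Gamma_1^{\rm tr}$: on $\Gamma_2$ the identity $\widetilde{v}^s_1 + \widetilde{v}^s_2 + u^i = 0$ is immediate from the definition of $\widetilde{f}_2$ together with $u^i_2 = u^i$ on $\Gamma_2$; on $\Gamma_1^{\rm tr}$, Lemma~\ref{localpro11} applied to $\widetilde{v}^s_2$ yields $\widetilde{v}^s_2 = 0$ for $t \leq \delta_{12}/c$, and combining with $\widetilde{v}^s_1 = -u^i$ on $\Gamma_1 \supset \Gamma_1^{\rm tr}$ completes the $M=2$ case. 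The inductive step from $L$ to $L+1$ proceeds exactly as in the original theorem: one uses $\widetilde{f}_{L+1} = -\widetilde{v}^s_L$ on $\Gamma_{j(L+1)}$ to cancel $\widetilde{v}^s_L + \widetilde{v}^s_{L+1}$ on $\Gamma_{j(L+1)}$ and reduce to the inductive hypothesis at step $L-1$ there; then one invokes Lemma~\ref{localpro11}, whose hypotheses are secured by the companion induction above and by the inductive vanishing of $\widetilde{v}^s_L$ on $\Gamma_{j(L+1)}^{\rm tr}$ up to time $(L-1)\delta_{12}/c$, to conclude that $\widetilde{v}^s_{L+1}$ vanishes on $\Gamma_{j'(L+1)}^{\rm tr}$ up to time $L\delta_{12}/c$, which combined with the step-$L$ hypothesis closes the induction on $\Gamma_{j'(L+1)}^{\rm tr}$.

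The main obstacle compared to Theorem~\ref{equivalence} is the bookkeeping needed to verify that $\widetilde{f}_m$ vanishes on $\Gamma_{12}$, a condition that is automatic in the original setting but must now be proved because Definition~\ref{inductive11} only explicitly prescribes $\widetilde{f}_m = 0$ on $\Gamma_{j(m)}^e$. Once the companion induction producing this vanishing is in place, Lemma~\ref{localpro11} delivers the sharp domain-of-influence property on the extended-arc geometry, and the remainder of the argument is a direct transcription of the proof of Theorem~\ref{equivalence}.
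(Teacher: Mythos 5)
Your proposal is correct and takes essentially the same approach as the paper, which states only that the proof is ``essentially identical to the proof of Theorem~\ref{equivalence}'' and omits it; your write-up is precisely that adaptation. In particular, your companion induction showing that $\widetilde{f}_m$ vanishes on $\Gamma_{12}\cup\Gamma_{j(m)}^e$ for $m\ge 2$ (using $u^i_j=u^i$ on $\Gamma_j$) is exactly the detail needed to invoke Lemma~\ref{localpro11} in place of Lemma~\ref{localpro}, and the rest of your argument correctly mirrors the base case, inductive step, and regularity bootstrap of the original proof.
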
}

\textcolor{r1}{The proof of this theorem is
  essentially identical to the proof of Theorem 2.8, and it is
  therefore omitted for brevity.}

  Incorporating the Fourier transform
and time-windowing and recentering strategies introduced in previous
sections, we are led to a new version of the hybrid multiple
scattering algorithm which, except for straightforward modifications
related to arc extensions, is entirely analogous to
Algorithm~\ref{alg11}, and whose slightly modified pseudocode is once
again omitted. The overall algorithm for evaluation of the numerical
solution $u^s$ of equation~\eqref{waveeqn}, incorporating the extended
arcs $\widetilde{\Gamma}_j$, is presented as Algorithm~\ref{alg2} in
Section~\ref{sec:3.3}.

Clearly, the weakly-singular integrals $\mathcal{H}(x,\omega)$ need to
be evaluated at a sufficiently large number of frequency
discretization-points $\omega$ in the interval $[-W,W]$. The
computational cost required for such evaluations can be reduced by
utilizing the decomposition
\begin{equation}
\Phi_\omega(x,y)=\Psi_0(x,y)+\kappa^2\Psi_1(x,y)+H_\omega(x,y),
\end{equation}
where
\begin{eqnarray}
\Psi_0(x,y)&=& -\frac{1}{2\pi}\log|x-y|,\\
\Psi_1(x,y)&=& \frac{|x-y|^2}{8\pi}\log|x-y|,
\end{eqnarray}
and
\begin{equation}
H_\omega(x,y)=\begin{cases}
\Phi_\omega(x,y)-\Phi_0(x,y)-\kappa^2\Phi_1(x,y), & x\ne y,\cr \\
\frac{i}{4}-\frac{1}{2\pi}(c_e+\log(\kappa/2)), & x=y,
\end{cases}
\end{equation}
($c_e=0.57721566\cdots$ is the Euler constant). The function
$H_\omega$ is more regular than the Green function $\Phi_\omega(x,y)$
itself, and its integration under a given error tolerance is therefore
less onerous. The discretization matrices associated with the
weakly-singular and nearly-singular integrals of the form
\begin{eqnarray}
\label{singular20}
  \mathcal{H}_\ell(x,\omega)
  &=&\int_{\widetilde\Gamma_j}
      \Psi_\ell(x,y)\psi(y,\omega)ds_y, \quad
      x\in\widetilde\Gamma_j,\;\;\Omega\;\;\mbox{or}\;\;
      \Gamma\backslash(\widetilde\Gamma_j),\quad \ell=0,1,
\end{eqnarray}
in turn, are independent of frequency, and can thus be precomputed
before the ping-pong iterative process is initiated.

In this work, the two-dimensional Chebyshev-based rectangular-polar
integral solver~\cite{BG18,BY21} is employed for the evaluation of all
singular integrals. The remaining integrals involving the smoother
kernels $H_\omega$ can be integrated efficiently and accurately by
means of Fejer's quadrature rule, and they can be further accelerated
e.g. by the methods presented in~\cite{BB21,BK01,L09} and references
therein, but such accelerations were not utilized in this work. In our numerical implementation, prior to the ping-pong iteration process we additionally use the Lapack function ZGESV to pre-compute the inverses $\mathbb{A}_j^{-1}(\omega)$ of the coefficient matrixes $\mathbb{A}_j(\omega)$ ($j=1,2$) resulting from the discretizations of the integral operators
\be
\label{singular3}
\mathcal{H}_1(x,\omega)+\textcolor{r1}{\kappa^2}\mathcal{H}_2(x,\omega)+\int_{\widetilde\Gamma_j} H_\omega(x,y)\psi(y,\omega)ds_y, \quad x\in\widetilde\Gamma_j,\quad j=1,2;
\en
these inverse matrices are then used repeatedly to obtain the numerical solution $\textcolor{r1}{\widetilde{\psi}_{m,k}}(x,\omega)$
of the integral equation
\be
\label{BIEmod}
\textcolor{r1}{\int_{\widetilde{\Gamma}_{j(m)}} \Phi_\omega(x,y)\widetilde{\psi}_{m,k}(y,\omega)ds_y=\widetilde{F}_{m,k}(x,\omega)\quad\mbox{on}\quad\widetilde{\Gamma}_{j(m)}}
\en
with \textcolor{r1}{$\widetilde{F}_{m,k}=\mathbb{F}_{k,slow}(\widetilde{f}_{m})$ and} $j=j(m)$ for all $k\in\mathcal{K}$ and all $m=1,\cdots,M$.

\subsection{Numerical implementation: overall outline}
\label{sec:3.3}

The overall algorithm for evaluation of the numerical solution $u^s$
of equation~\eqref{waveeqn} relies on the concepts presented in Sections \ref{sec:3.1}-\ref{sec:3.2} and the following notations and conventions.

With reference to Section~\ref{sec:3.1}, we denote by
$\mathcal{F}=\{\omega_1,\cdots,\omega_J\}$ a set of frequencies used
for the Fourier transformation process, which includes an equi-spaced
grid in the frequency intervals $[-W,-w_c]$ and $[w_c,W]$, as well as
a combination of the Clenshaw-Curtis mesh points in the intervals
$(-\mu_{j+1},-\mu_j)$ and $(\mu_j,\mu_{j+1})$, $j=1,\cdots,Q-1$,
for a total of $J=2\widetilde{L}+2n_{\mathrm{ch}}(Q-1)$ frequency
discretization points. For the necessary time-domain discretization, in turn, we use the mesh $\mathcal{T}=\{t_n=n\Delta t\}_{n=1}^{N_T}$
of the time interval $[0,s_K+H]$, where $\Delta
t=(s_K+H)/N_T$, and we call $\mathcal{T}_0=\mathcal{T}\cap[0,T]$.
With reference to Section~\ref{sec:3.2}, on the other hand, frequency-independent meshes $\mathcal{M}_{j}$ are used on the curves $\widetilde{\Gamma}_j$, $j=1,2$ for all frequencies considered. The set of discrete spatial observation points at which the scattered field is to be produced, finally,  is denoted by $\mathcal{R}$.

Using these notations,  a version of Algorithm~\ref{alg11}, including certain details concerning our numerical implementation, is presented in Algorithm~\ref{alg2}.

\begin{algorithm}
\caption{Numerical hybrid multiple scattering algorithm with time-recentering}
\label{alg2}
\begin{algorithmic}[1]
\STATE Pre-compute the matrices $\mathbb{A}_j^{-1}(\omega)$ for $j=1,2,\ \omega\in\mathcal{F}$.
\STATE Do $m=1,2,\cdots,M$
\STATE \ \ \ \ Evaluate the boundary data \textcolor{r1}{$\widetilde{f}_m(x,t), (x,t)\in\mathcal{M}_{j(m)}\times\mathcal{T}$ via relations (\ref{criterion011})-(\ref{criterion211})}.
\STATE \ \ \ \ Initialize $\textcolor{r1}{\widetilde{v}^s_{m}}(x,t)=0, (x,t)\in(\mathcal{R}\times\mathcal{T}_0)\cup(\mathcal{M}_{j(m)}\times\mathcal{T})$.
\STATE \ \ \ \ Do $k=1,2,\cdots,K$
\STATE \ \ \ \ \ \ \ \ For $\omega\in \mathcal{F}$, evaluate the vectors $\mathbb{B}_{m,k}(\omega)$ whose elements are \ben
\textcolor{r1}{\widetilde{F}_{m,k}(x,\omega)=\mathbb{F}_{k,slow}(\widetilde{f}_m)(x,\omega)}, \quad x\in\mathcal{M}_{j(m)},
\enn
\ \ \ \ \ \ \ \ by the Fourier transform algorithm presented in Section~\ref{sec:3.1}.
\STATE \ \ \ \ \ \ \ \ Compute the approximation of the solution $\textcolor{r1}{\widetilde{\psi}_{m,k}}(x,\omega), (x,\omega)\in \mathcal{M}_{j(m)}\times \mathcal{F}$ of the integral equation\\
\ \ \ \ \ \ \ \  \textcolor{r1}{(\ref{BIEmod})} given by $\mathbb{A}_j^{-1}(\omega)\mathbb{B}_{m,k}(\omega)$.
\STATE \ \ \ \ \ \ \ \ Evaluate $\textcolor{r1}{\widetilde{V}^s_{m,k}}(x,\omega), (x,\omega)\in(\mathcal{R}\cup\mathcal{M}_{j(m)})\times \mathcal{F}$ through
\ben
\textcolor{r1}{\widetilde{V}^s_{m,k}(x,\omega)=\int_{\widetilde{\Gamma}_{j(m)}} \Phi_\omega(x,y)\widetilde{\psi}_{m,k}(y,\omega)ds_y}
\enn
\ \ \ \ \ \ \ \ and rectangular-polar Chebyshev-based  integration (Section~\ref{sec:3.2}).
\STATE \ \ \ \ \ \ \ \ Evaluate $\textcolor{r1}{\widetilde{v}^s_{m}(x,t)=\widetilde{v}^s_{m}(x,t)+\mathbb{F}^{-1}(\widetilde{V}^s_{m,k})(x,t-s_k)}, (x,t)\in(\mathcal{R}\times\mathcal{T}_0)\cup(\mathcal{M}_{j(m)}\times\mathcal{T})$ by the Fourier \\
\ \ \ \ \ \ \ \ transform algorithm presented in Section~\ref{sec:3.1}.
\STATE \ \ \ \ End Do
\STATE End Do
\STATE Evaluate the numerical solution
      \begin{equation}
      u^s_{\rm num}(x,t)=\sum_{m=1}^M \textcolor{r1}{\widetilde{v}^s_{m}}(x,t),\quad (x,t)\in\mathcal{R}\times \mathcal{T}_0.
      \end{equation}
\end{algorithmic}
\end{algorithm}

\section{Numerical examples}
\label{sec:4}

This section presents a variety of numerical tests that illustrate the
character of the proposed frequency-time hybrid ping-pong
integral-equation solver embodied in Algorithm~\ref{alg2}. The numerical errors presented in this
section were calculated in accordance with the expression
$\max_{t\in[0,T]}|u^{s}_{\rm num}-u^{s}_{\rm ref}|$ where, with
exception of the test \textcolor{r1}{cases considered in Example 3 and 6, for which the
exact solutions are known, the reference solutions $u^{s}_{\rm ref}$ were
obtained as numerical solutions produced by means of} sufficiently fine
discretizations. (Our use of absolute errors is justified since, as evident from the numerical solution plots in each case,  we only consider solutions whose maximum values are quantities of order one.) All of the numerical tests were obtained on the basis of
Fortran numerical implementations, parallelized using OpenMP, on an
10-core HP Desktop with an Intel Core processor i9-10900.

{\bf Example 1.} Our first test case concerns the accuracy of the
numerical solver for the frequency domain integral equation
(\ref{pingpong4}) on the single open-arc $\Gamma_1$ shown in
Figure~\ref{MSmodel}(c), with point-source boundary data
\begin{equation}
  F_1=-U^i = - \frac{i}{4}H_0^{(1)}(\kappa|x|) \quad\mbox{on}\quad\Gamma_1.
\end{equation}
Figure~\ref{Example1.1} displays errors in the solution evaluated by
means of the single-layer potential (\ref{pingpong3}), at the points
$(0.5,0), (0,2), (0,1.01), (-0.99,0)$, as functions the number $N$ of
Chebyshev points used in each one of the patches associated with the
Chebyshev-based discretization methodology~\cite{BY21}. Clearly,
uniform fast convergence of the numerical solutions is obtained at all
points, independently of the distance to the boundary. For this
example a total of $5$ patches (resp. $25$ patches) where used for
test cases with wavenumber $\kappa=10$ (resp. $\kappa=50$).

\begin{figure}[htbp]
\centering
\begin{tabular}{cc}
\includegraphics[scale=0.18]{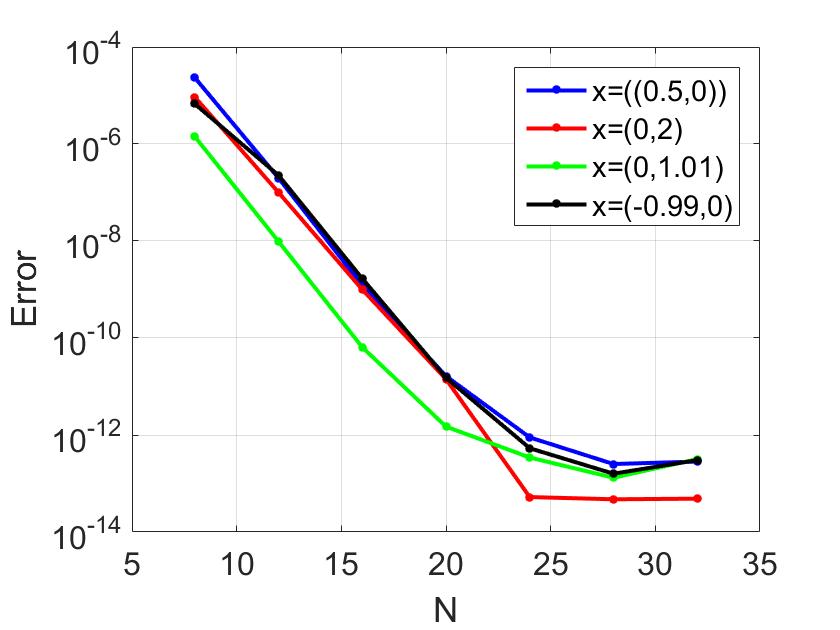} &
\includegraphics[scale=0.18]{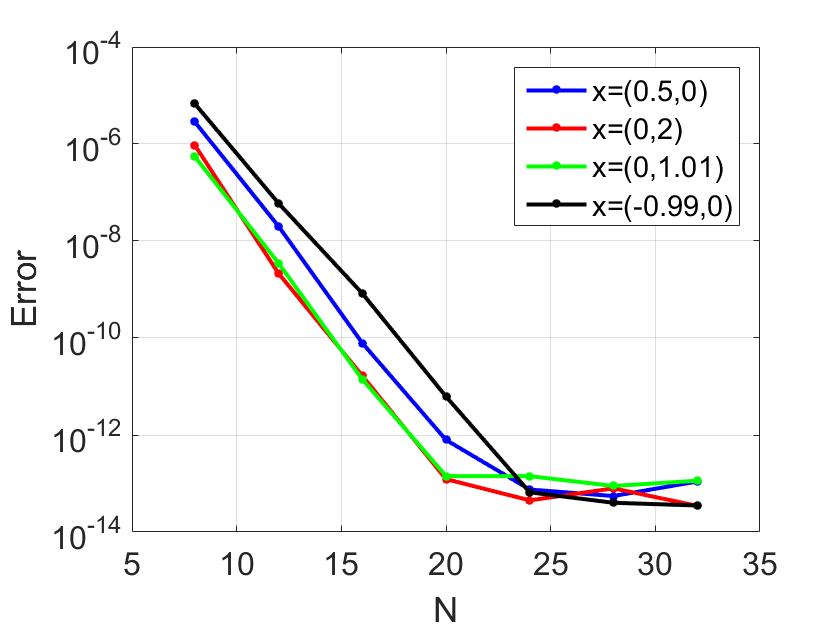} \\
(a) $\kappa=10$ & (b) $\kappa=50$
\end{tabular}
\caption{Numerical errors observed in the frequency-domain solutions considered in Example~1, at various points $x$, as functions of the number $N$ of discretization points used.}
\label{Example1.1}
\end{figure}

{\bf Example 2.} We now consider test cases that \textcolor{r1}{demonstrate} the
accuracy of the {\em time-domain} solver for the problem scattering by
a single open-arc $\Gamma_1$ depicted in Figure~\ref{MSmodel}(c). We
consider incident fields of two different kinds, namely, 1)~A
Gaussian-modulated point source $u_1^i(x,t)$ equal to the Fourier
transform of the function
\begin{eqnarray}
\label{pointsource}
U^i(x,\omega)=\frac{5i}{2}H_0^{(1)}(\omega|x-z|) e^{-\frac{(\omega-\omega_0)^2}{\sigma^2}}e^{i\omega t_0}
\end{eqnarray}
with respect to $\omega$, with $\sigma=2$, $\omega_0=15$, $t_0=4$ and
$z=(0,0)$; and 2)~A plane-wave incident field
\begin{eqnarray}
\label{planewave}
  u_2^i(x,t)=-\sin(4g(x,t))e^{-1.6(g(x,t)-3)^2},\quad
  g(x,t)=t-t_\textrm{lag}-x\cdot d^\textrm{inc}
\end{eqnarray}
along the incident direction $d^\textrm{inc}=(1,0)$  with
$t_\textrm{lag}=2$.  Together with a sufficiently fine fixed spatial
discretization, the fixed numerical frequency intervals
$\omega\in[5,25]$ and $\omega\in[-20,20]$ were used for the incident
fields $u_1^i$ and $u_2^i$, respectively. Figures~\ref{Example2.1} and
\ref{Example2.2} \textcolor{r1}{present the scattered field as a function of time $t$} at the observation point $x=(0.5,0)$ and the corresponding numerical errors at that point\textcolor{r1}{, respectively, as functions} of the number of frequencies used---demonstrating the
fast convergence of the algorithms as the frequency-domain
discretization is refined.

\begin{figure}[htbp]
\centering
\begin{tabular}{cc}
\includegraphics[scale=0.18]{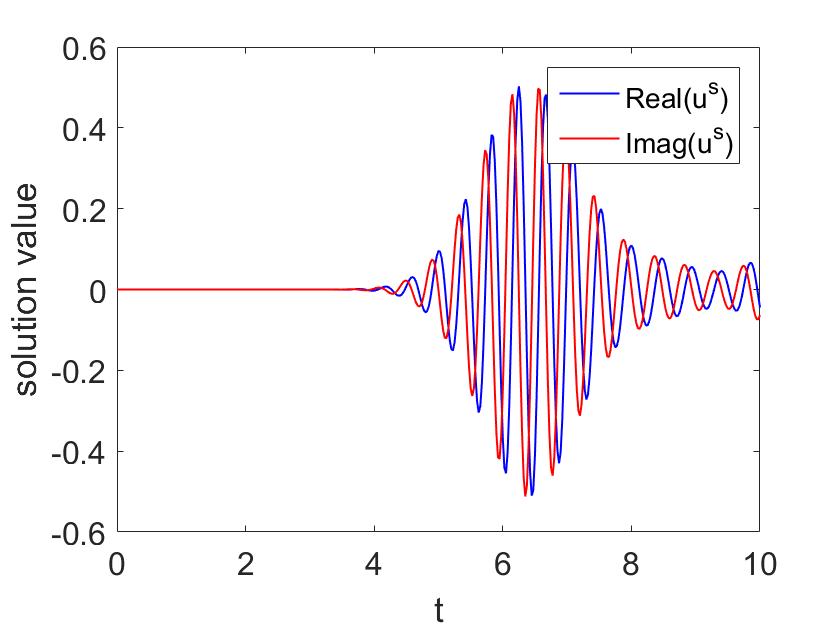} &
\includegraphics[scale=0.18]{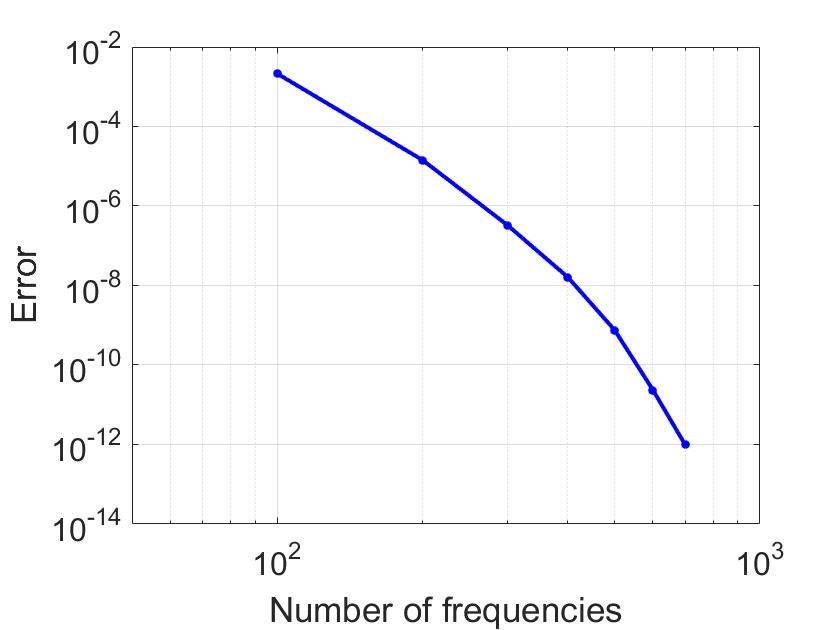} \\
(a) & (b)
\end{tabular}
\caption{Scattered field and errors obtained for the problem considered in Example 2. (a) Real and imaginary parts of scattered field  at $x=(0.5,0)$ resulting from the incident field $u_1^i$. (b) Convergence of the complex scattered field at $x=(0.5,0)$ as a function of the number of frequencies used.}
\label{Example2.1}
\end{figure}

\begin{figure}[htbp]
\centering
\begin{tabular}{cc}
\includegraphics[scale=0.18]{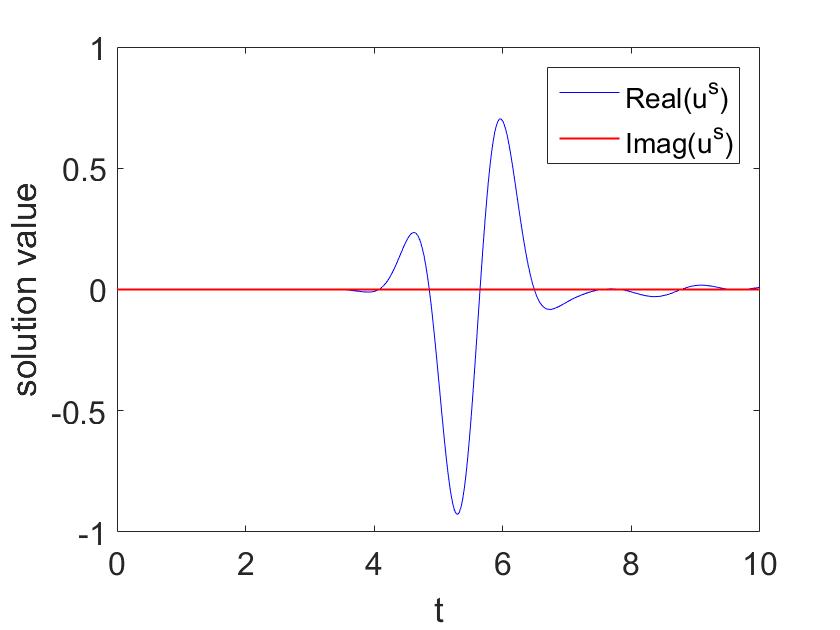} &
\includegraphics[scale=0.18]{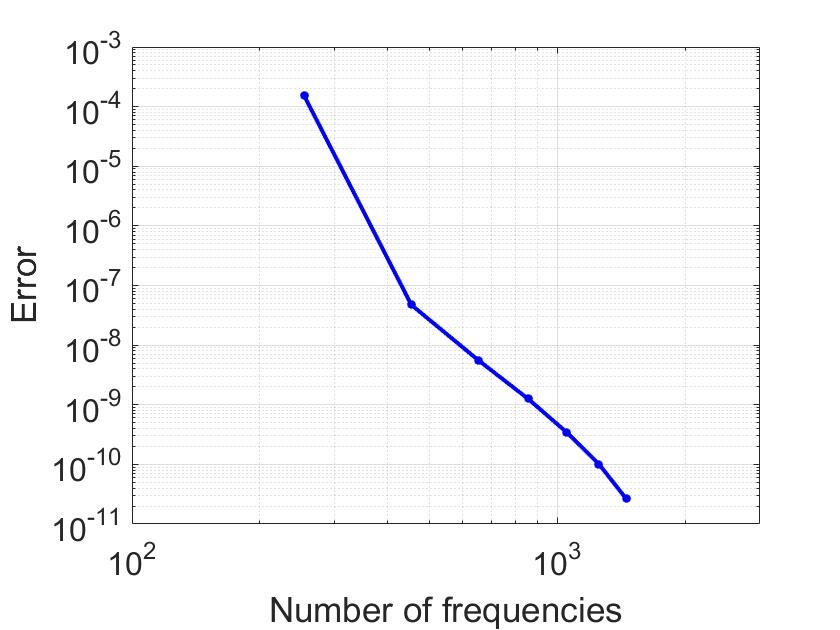} \\
(a) & (b)
\end{tabular}
\caption{Scattered field and errors obtained for the problem considered in Example 2. (a) Real and imaginary parts of scattered field  at $x=(0.5,0)$ resulting from the incident field $u_2^i$. (b) Convergence of the complex scattered field at $x=(0.5,0)$ as a function of the number of frequencies used.}
\label{Example2.2}
\end{figure}

{\bf Example 3.} This example presents the solutions produced by
the full hybrid ping-pong multiple scattering algorithm for the wave
equation problem (\ref{waveeqn}) in two different domains $\Omega$,
namely, the unit disc centered at the origin and the unit square
$\Omega=[-1,1]^2$, for $t\in[0,10]$, and for each one of the two
time-domain sources considered in Example 2: the point source
$u_1^i(x,t)$ and the plane wave source $u_2^i(x,t)$. For the
plane-wave incidence case the exact solution is given by
$u^s(x,t)=-u_2^i(x,t)$ for $x\in\Omega$. In this example, the
extensions $\widetilde{\Gamma}_j$ of $\Gamma_j$ for $j=1,2$ are
constructed by means of portions of tangent circular arcs of radii
$0.1$. For the wave equation problem (\ref{waveeqn}) in a unit disc domain, the numerical errors as a function of $M$ are displayed in Figures~\ref{Example3.1} and \ref{Example3.2}: clearly,
rapid convergence and high accuracy are
observed. Figures~\ref{Example3.3} and \ref{Example3.4} display the total
field within the rectangular domain $\Omega$ at various times, for two different point-source
locations $z$, and two different values of $\omega_0$ in~(\ref{pointsource}), and using a total of $M=10$ ping-pong iterations; we have verified that, in this case, the numerical errors are less than $10^{-8}$ for all $t\in[0,10]$.


\begin{figure}[htbp]
\centering
\begin{tabular}{cc}
\includegraphics[scale=0.2]{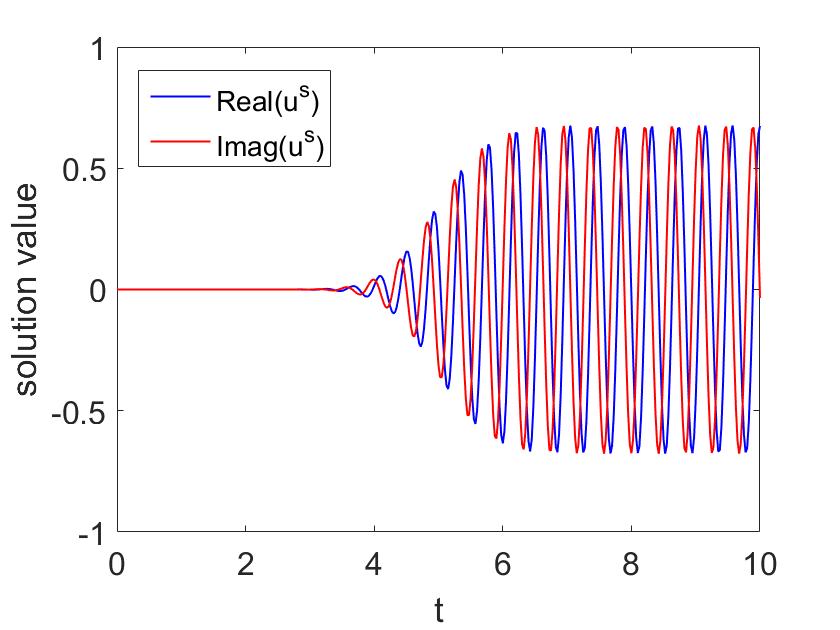} &
\includegraphics[scale=0.2]{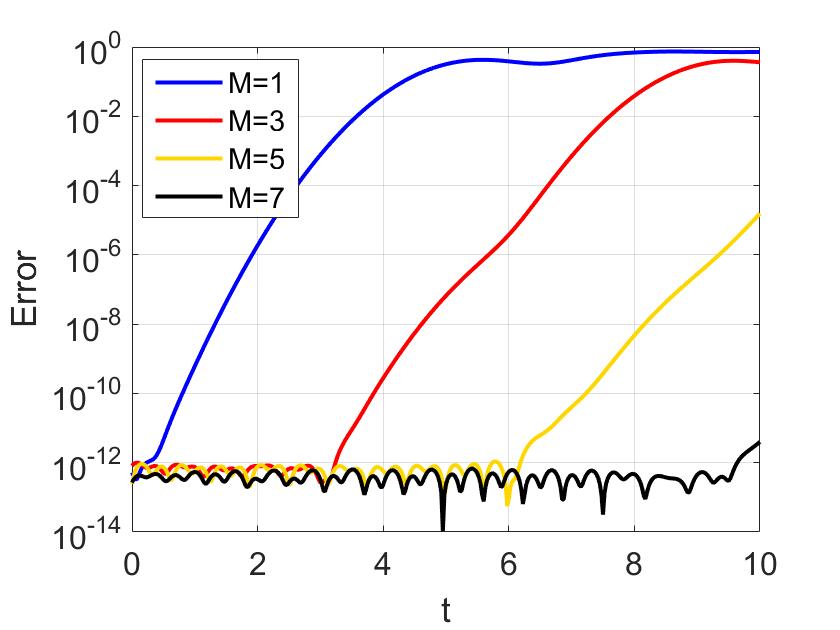} \\
(a) & (b)
\end{tabular}
\caption{Scattered field and errors obtained for the problem considered in Example 3. (a) Real and imaginary parts of scattered field  at $x=(0.5,0)$ resulting from the incident field $u_1^i$. (b) Numerical errors as functions of time $t$ for various values of $M$.}
\label{Example3.1}
\end{figure}

\begin{figure}[htbp]
\centering
\begin{tabular}{cc}
\includegraphics[scale=0.2]{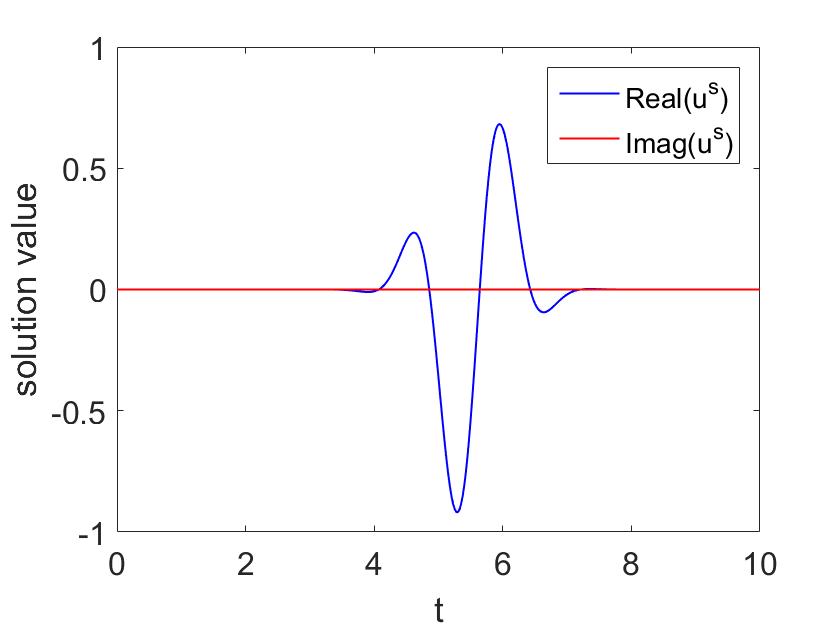} &
\includegraphics[scale=0.2]{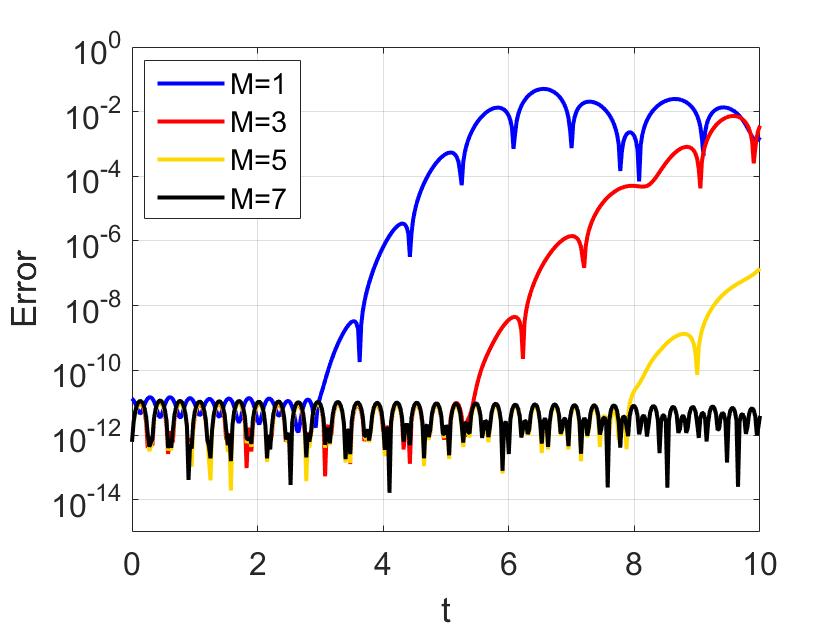} \\
(a) & (b)
\end{tabular}
\caption{Scattered field and errors obtained for the problem considered in Example 3. (a) Real and imaginary parts of scattered field  at $x=(0.5,0)$ resulting from the incident field $u_2^i$. (b) Numerical errors as functions of time $t$ for various values of $M$.}
\label{Example3.2}
\end{figure}

\begin{figure}[htbp]
\centering
\begin{tabular}{cc}
\includegraphics[width=0.75\textwidth,height=0.15\textwidth]{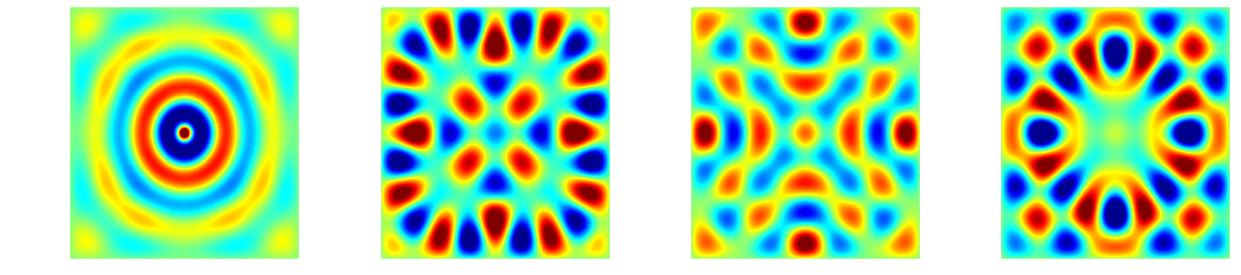} \\
\includegraphics[width=0.75\textwidth,height=0.15\textwidth]{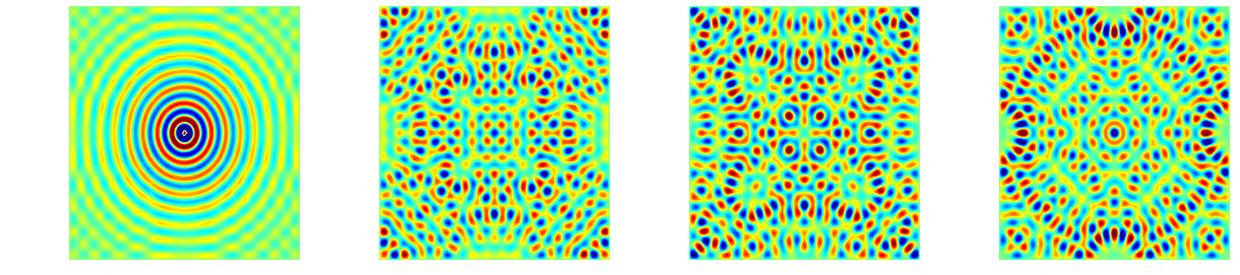} \\
\end{tabular}
\caption{Real part of the total fields for the problem considered in Example 3 with point source located at $z=(0,0)$. Upper row: $\omega_0 = 15$. Lower row: $\omega_0 = 50$. Fields at times $t=4$, $6$, $8$ and $10$ are displayed from left to right in each row.}
\label{Example3.3}
\end{figure}

\begin{figure}[htbp]
\centering
\begin{tabular}{cc}
\includegraphics[width=0.75\textwidth,height=0.15\textwidth]{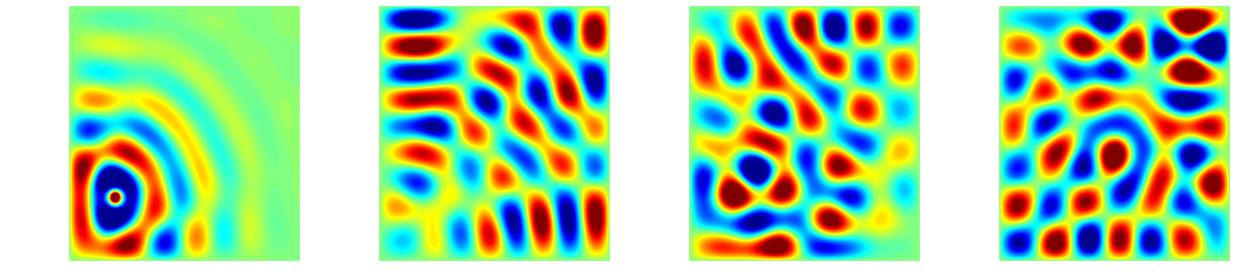} \\
\includegraphics[width=0.75\textwidth,height=0.15\textwidth]{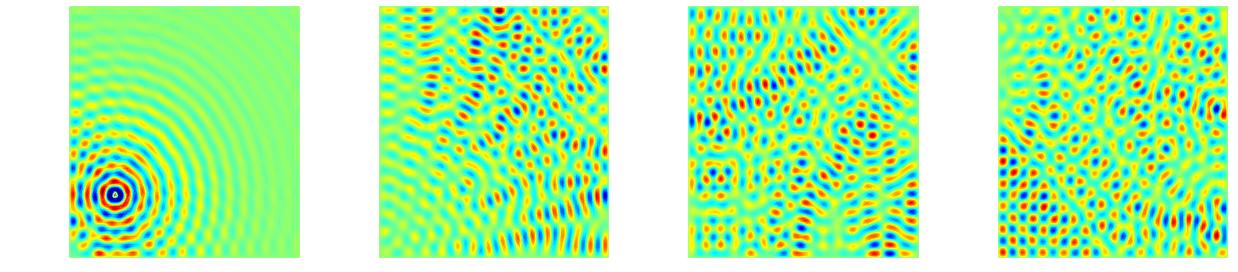} \\
\end{tabular}
\caption{Real part of the total fields for the problem considered in Example 3 with point source located at $z=(-0.6,-0.5)$. Upper row: $\omega_0 = 15$. Lower row: $\omega_0 = 50$. Fields at times $t=4$, $6$, $8$ and $10$ are displayed from left to right in each row.}
\label{Example3.4}
\end{figure}

{\bf Example 4.} We now use Algorithm~\ref{alg2} to solve wave
equation problems in a disc-shaped domain (Figure~\ref{MSmodel}(c)) and
a T-shaped domain (Figure~\ref{Example4.2}(a)), up to final times $T=10$, and using $M=7$ ping-pong iterations. The incident wave is a pulse
function given by
\begin{equation}
\label{generalinc}
u^i(x,t)=f(t-|x-z_0|/c),\quad f(s)=\sin(4s)e^{-1.6(s-3)^2},
\end{equation}
which is displayed in Figure~\ref{Example4.2}(b). Figure~\ref{Example4.2}(c) displays the corresponding
Fourier transform, in view of which the fixed numerical bandwidth
value $W=15$ was used for this example. Figures~\ref{Example4.3}-\ref{Example4.5}
display the total field within $\Omega$ at various times and for different
source point locations.

\begin{figure}[htb]
\centering
\begin{tabular}{ccc}
\includegraphics[scale=0.15]{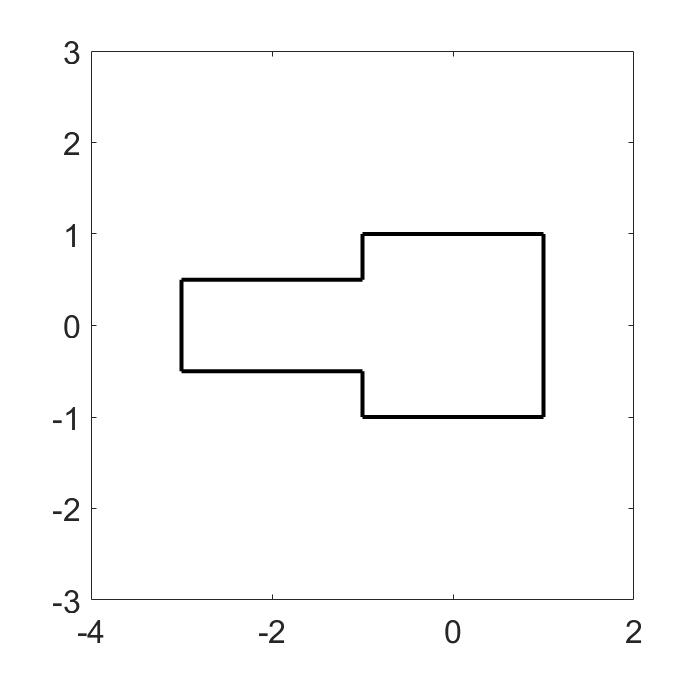} &
\includegraphics[scale=0.15]{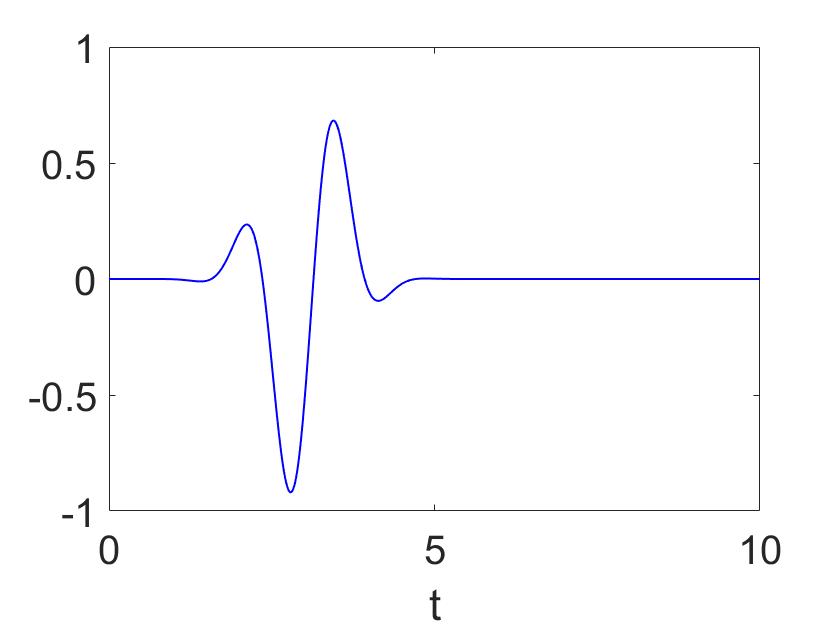} &
\includegraphics[scale=0.15]{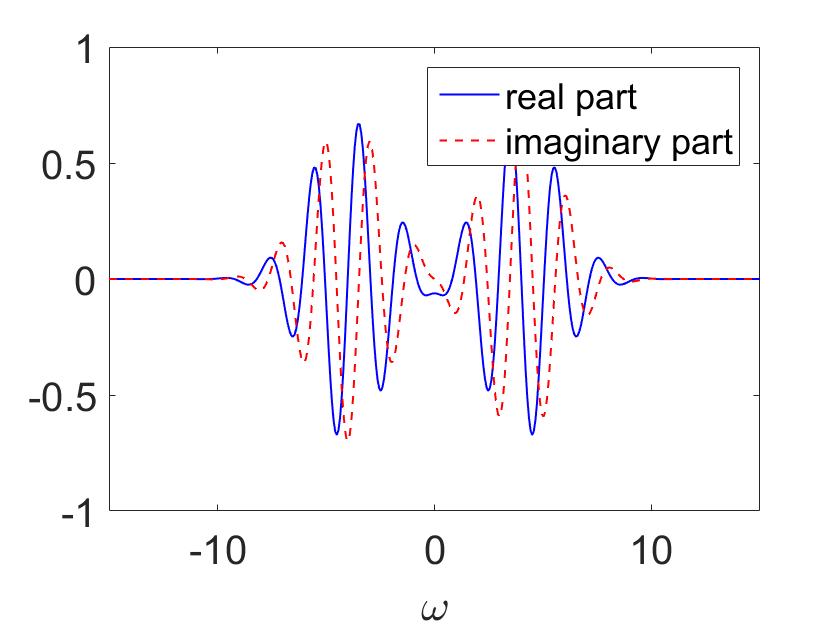} \\
(a) T-shaped domain. & (b) $u^i(x,t)$ for $|x-z_0| =1.$ & (c) $U^i(x,\omega).$
\end{tabular}
\caption{Setup utilized for the test case considered in Example 4, including,  (a) The T-shaped domain used, as well as, (b) The time-domain incident wave $u^i(x,t)$, and, (c) Its Fourier transform $U^i(x,\omega)$. The Fourier transform displayed in (c) is smaller than $10^{-8}$ outside the $\omega$-range considered in the figure.}
\label{Example4.2}
\end{figure}

\begin{figure}[htbp]
\centering
\begin{tabular}{c}
\includegraphics[width=0.75\textwidth,height=0.15\textwidth]{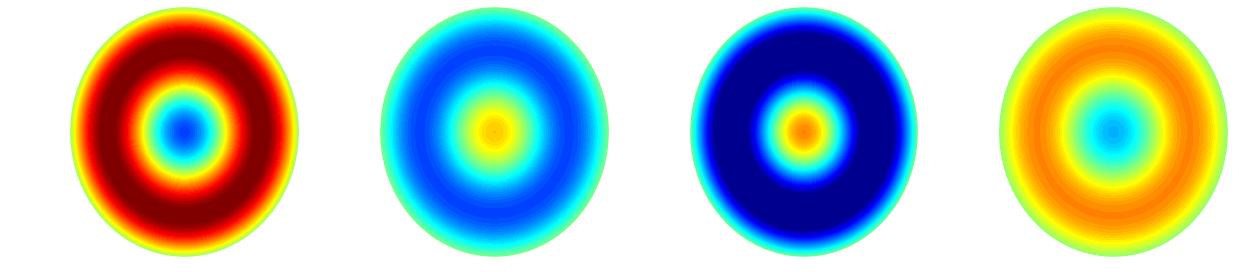} \\
\includegraphics[width=0.75\textwidth,height=0.15\textwidth]{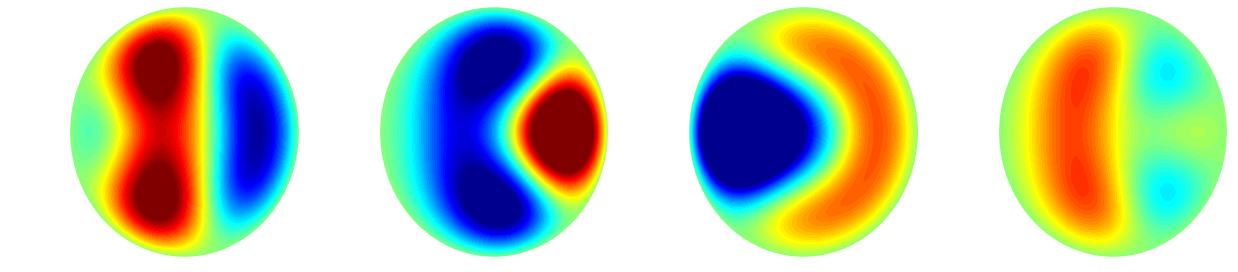}
\end{tabular}
\caption{Total fields in the disc-shaped domain considered in Example 4. Upper row: $z_0=(0,0)$. Lower row: $z_0=(-0.5,0)$. Fields at times $t=4$, $6$, $8$ and $10$ are displayed from left to right in each row.}
\label{Example4.3}
\end{figure}

\begin{figure}[htbp]
\centering
\begin{tabular}{cc}
\includegraphics[scale=0.2]{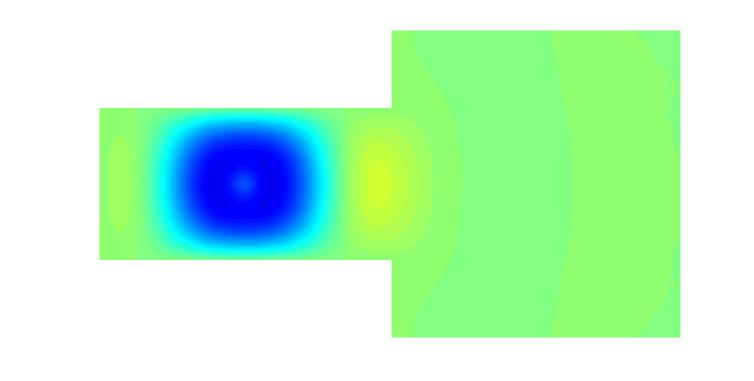} &
\includegraphics[scale=0.2]{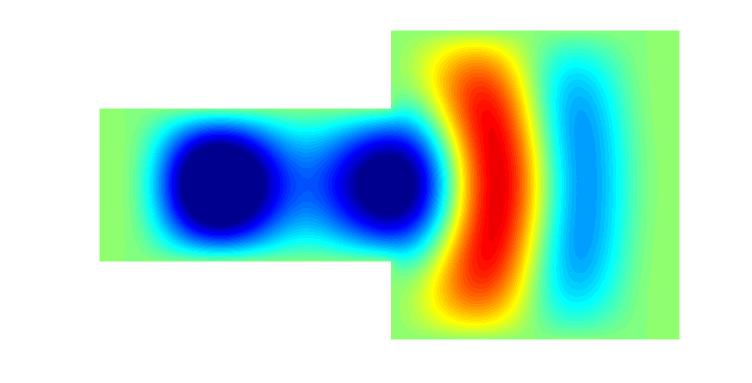} \\
(a) $t=3$ & (b) $t=5$ \\
\includegraphics[scale=0.2]{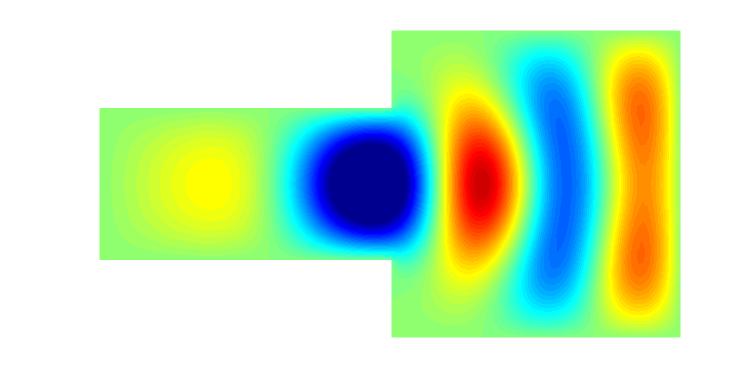} &
\includegraphics[scale=0.2]{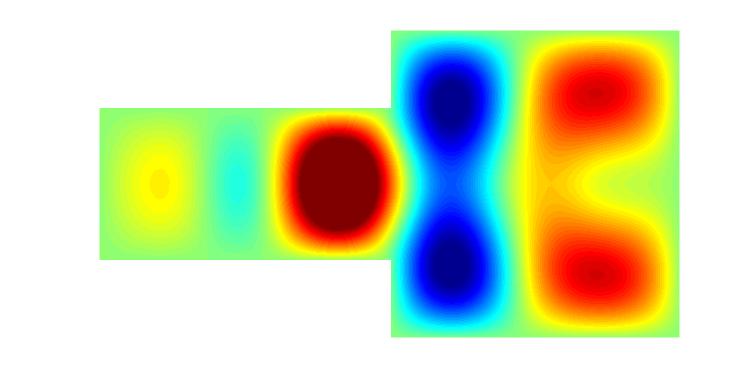} \\
(c) $t=7$ & (d) $t=9$
\end{tabular}
\caption{Total fields in the T-shaped domain considered in Example 4, with point source located at $z_0=(-2,0)$, at various times $t$.}
\label{Example4.4}
\end{figure}

\begin{figure}[htbp]
\centering
\begin{tabular}{cc}
\includegraphics[scale=0.2]{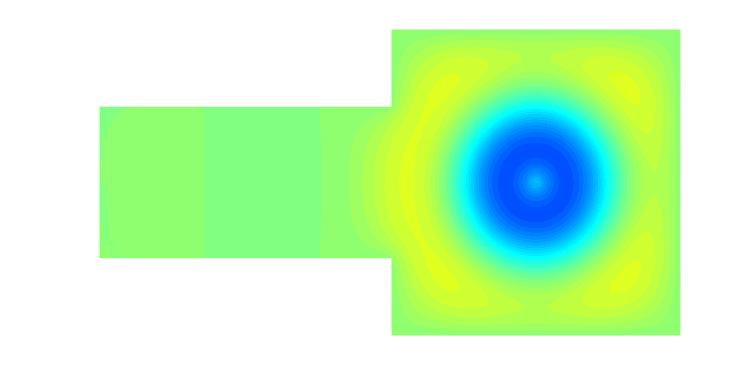} &
\includegraphics[scale=0.2]{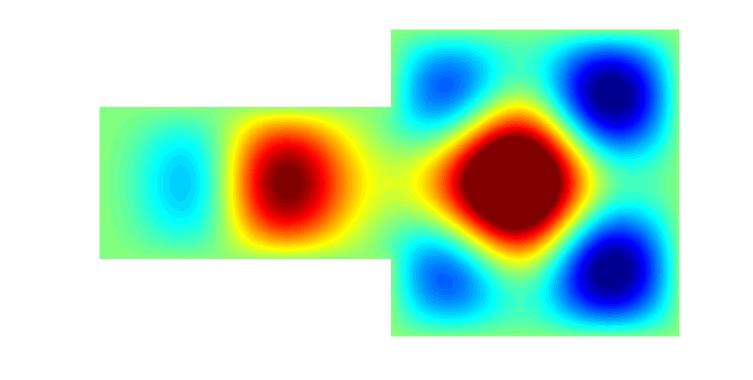} \\
(a) $t=3$ & (b) $t=5$ \\
\includegraphics[scale=0.2]{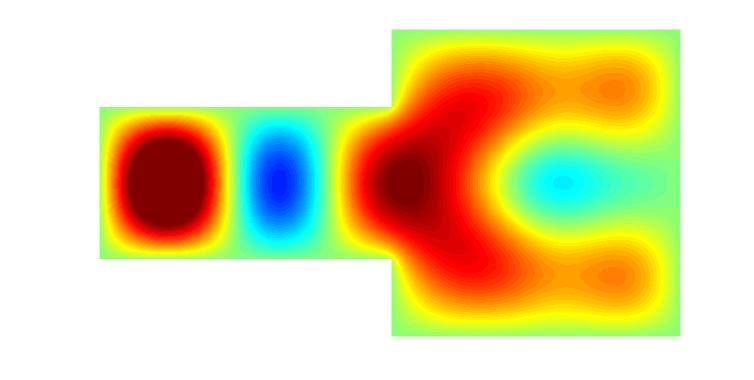} &
\includegraphics[scale=0.2]{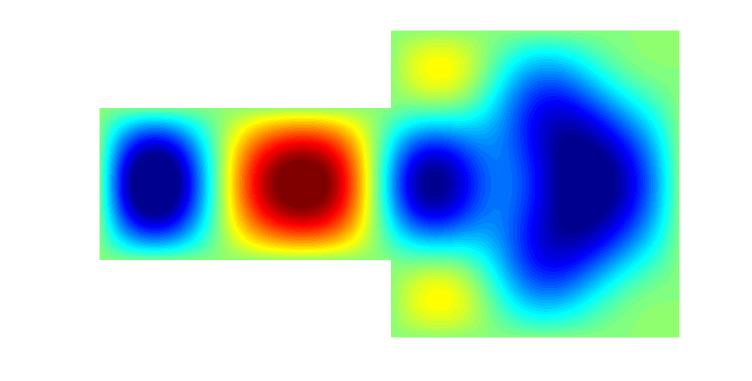} \\
(c) $t=7$ & (d) $t=9$
\end{tabular}
\caption{Total fields in the T-shaped domain considered in Example 4, with point source located at $z_0=(0,0)$, at various times $t$.}
\label{Example4.5}
\end{figure}

{\bf Example 5.} This example concerns a long time propagation and scattering problem  in a unit disc domain under the incident wave (\ref{generalinc}) with $z_0=(0,0)$. For this example we have used
$\delta_{12}=2\sin\frac{\pi}{10}\approx0.618$,
$M=45$, $K=4$ (so that $s_K+H=55$), $W=20$, $J=454$, and $\Delta t=0.11$,  and we have computed the necessary frequency domain solutions using open-arc discretizations $\mathcal{M}_j,j=1,2$, each one of which   contains 224
discretization points.  Note that the exact solution values at
the points $x=(-\sqrt{3}/4,1/4)$ and
$x=(0.5,0)$  coincide (since $|(-\sqrt{3}/4,1/4)|=|(0.5,0)|=0.5$). This simple symmetry relation provides a valuable verification of the numerical solution---which, as illustrated Figure~\ref{Example5.1}, is closely satisfied by the numerical solution. Tables~\ref{TableExp5.1} and \ref{TableExp5.2}, finally,  present the
numerical solution errors \textcolor{r1}{$\max_{t\in[0,T(M)]}|u^{s}_{\rm num}-u^{s}_{\rm ref}|$} for the present problem at the point $x = (0.5,0)$, for various values of  $M$ and corresponding final times $T(M)$, together with other
statistics such as precomputation time and total computational times. Note in particular that the solution errors do not grow as the final times increase.

\begin{figure}[htbp]
\centering
\includegraphics[scale=0.25]{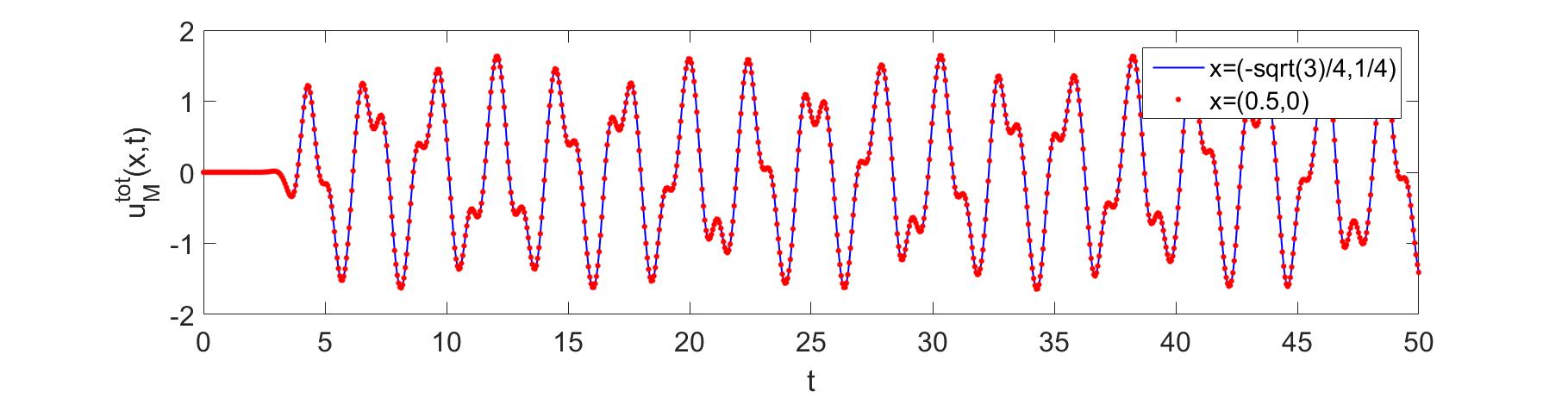}
\caption{Time-domain solutions $u_{M}^{tot}(x,t), t\in[0,50]$ considered in Example 5 at $x=(-\sqrt{3}/4,1/4)^\top$ and $x=(0.5,0)^\top$ with $M=45$. As illustrated in the figure, these two \textcolor{r1}{functions} coincide, by symmetry.}
\label{Example5.1}
\end{figure}


\begin{table}[htb]
  \caption{Numerical errors, precomputation time and total computational times required by the problem considered in Example 5 for various values of $M$. $J=254$ frequencies were used in all cases.}
\centering
\begin{tabular}{|c|c|c|c|c|c|}
\hline
$M$ & 15 &  25 & 35 & 45 \\
\hline
$T(M)$ & 8.652 &  14.832 & 21.012 & 27.192 \\
\hline
Error & $6.57\times 10^{-4}$ & $4.74\times 10^{-5}$ & $1.87\times 10^{-5}$ & $5.56\times 10^{-6}$ \\
\hline
Time (precomputation) & \multicolumn{4}{|c|}{9.1 s} \\
\hline
Time ($M$ iterations) & 30.0 s  & 52.2 s & 73.1 s & 94.4 s \\
\hline
\end{tabular}
\label{TableExp5.1}
\end{table}

\begin{table}[htb]
  \caption{Numerical errors, precomputation time and total computational times required by the problem considered in Example 5 for various values of $M$. $J=454$ frequencies were used in all cases.}
\centering
\begin{tabular}{|c|c|c|c|c|c|}
\hline
$M$ & 15 &  25 & 35 & 45\\
\hline
$T(M)$ & 8.652 &  14.832 & 21.012 & 27.192 \\
\hline
Error & $1.24\times 10^{-7}$ & $2.59\times 10^{-8}$ & $1.10\times 10^{-8}$ & $4.36\times 10^{-9}$ \\
\hline
Time (precomputation) & \multicolumn{4}{|c|}{16.8 s} \\
\hline
Time ($M$ iterations) & 46.3 s  & 78.4 s & 109.8 s & 141.3 s \\
\hline
\end{tabular}
\label{TableExp5.2}
\end{table}

\textcolor{r1}{{\bf Example 6.} In our final example we briefly demonstrate the feasibility of a version of the proposed multiple scattering algorithm which utilizes more than two patches. This extended algorithm requires use of appropriately windowed boundary data for the open-surface wave equation problems associated with multiple patches. At each step, the multiple wave equation problems can be solved in parallel. Complete details concerning the algorithm and its implementation will be presented elsewhere~\cite{BBY}. In the example presented here a three-patch decomposition of the boundary $\Gamma$, as shown in Figure~\ref{Example7.1}, is utilized to solve once again the wave equation problem (\ref{waveeqn}) on the unit disc, under plane-wave  incidence $u_2^i(x,t)$, considered in Example 2, and the exact solution is given by
$u^s(x,t)=-u_2^i(x,t)$ for $x\in\Omega$. The numerical solutions at $x=(0.5,0)$ as functions of time $t$ for various values of $M$ are displayed in Figure~\ref{Example7.2}. The maximum numerical errors are of the same order as those displayed in Figure~\ref{Example3.2}(b) for the two-patch case.}

\begin{figure}[htbp]
\centering
\includegraphics[scale=0.15]{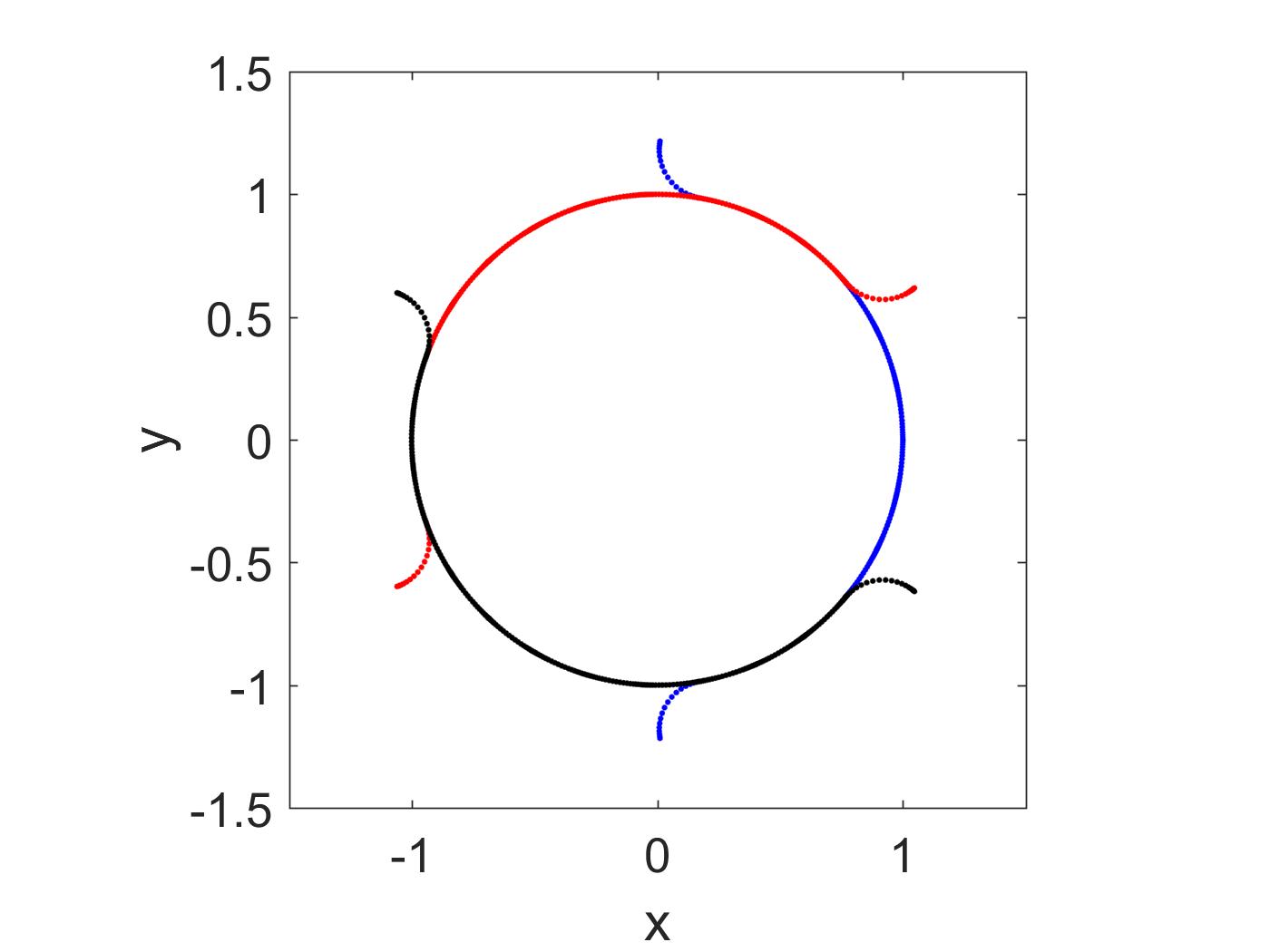}
\caption{Decomposition of a circular closed curve using three overlapping patches with extension.}
\label{Example7.1}
\end{figure}

\begin{figure}[htbp]
\centering
\begin{tabular}{ccc}
\includegraphics[scale=0.1]{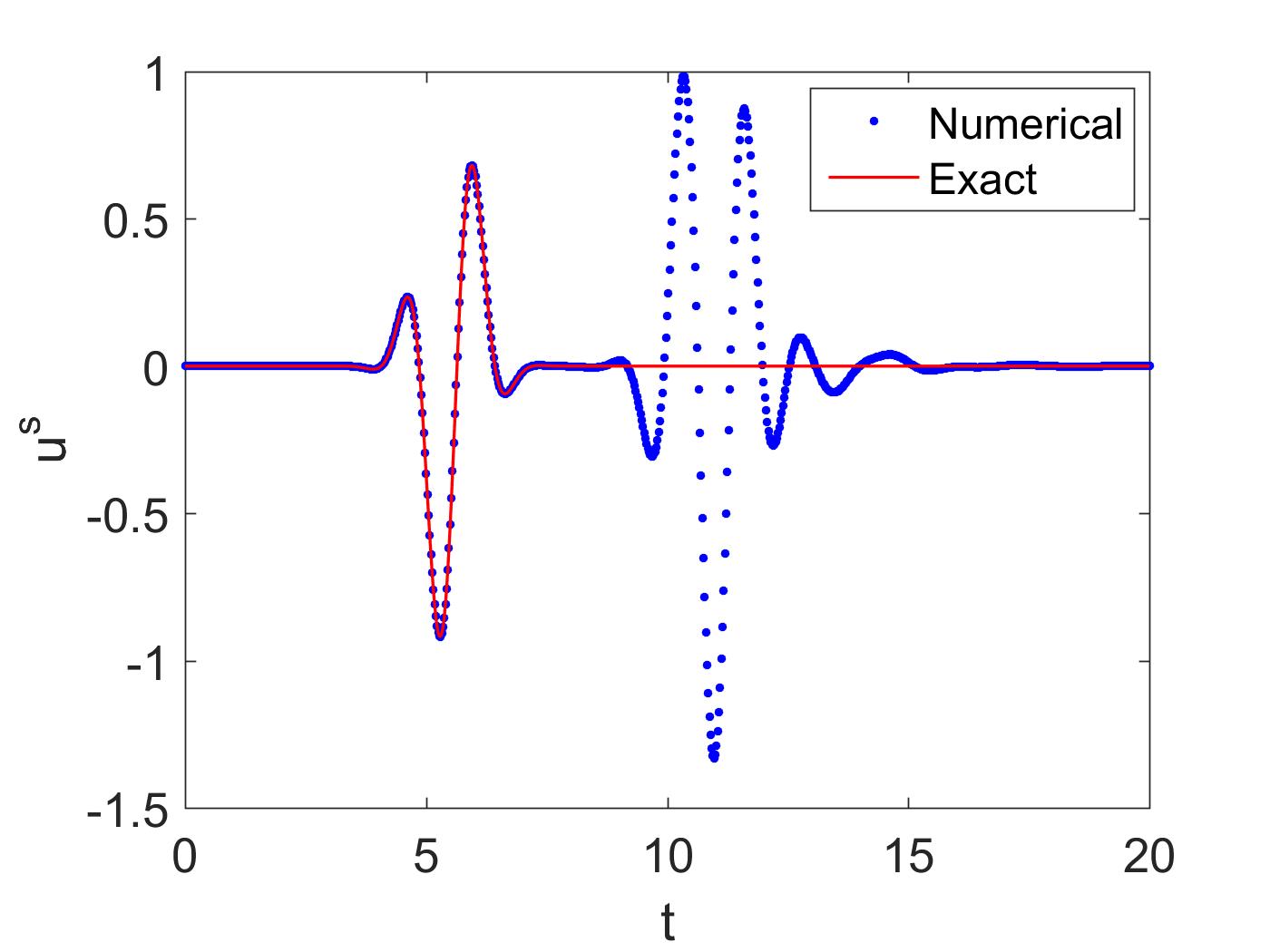} &
\includegraphics[scale=0.1]{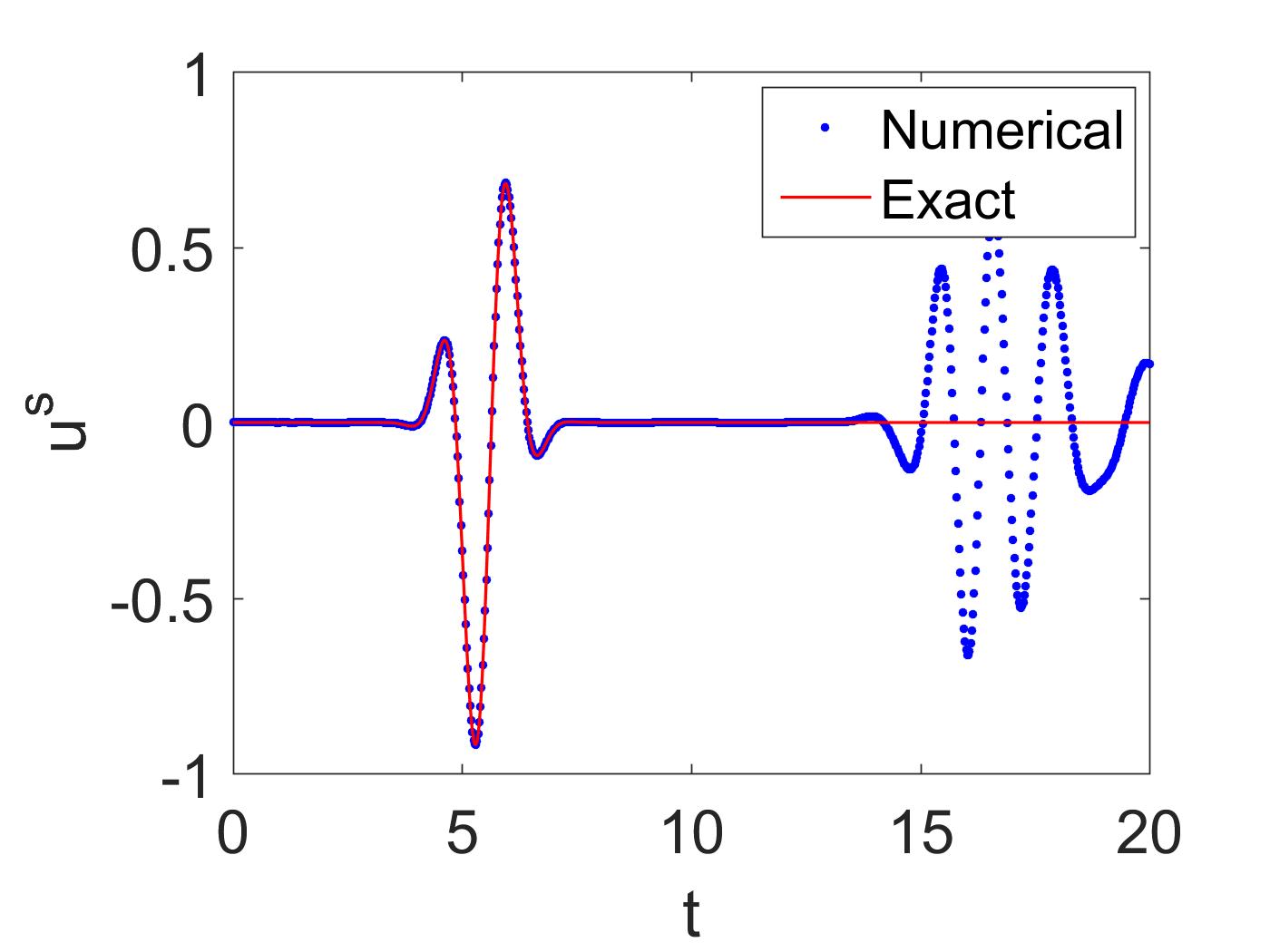} &
\includegraphics[scale=0.1]{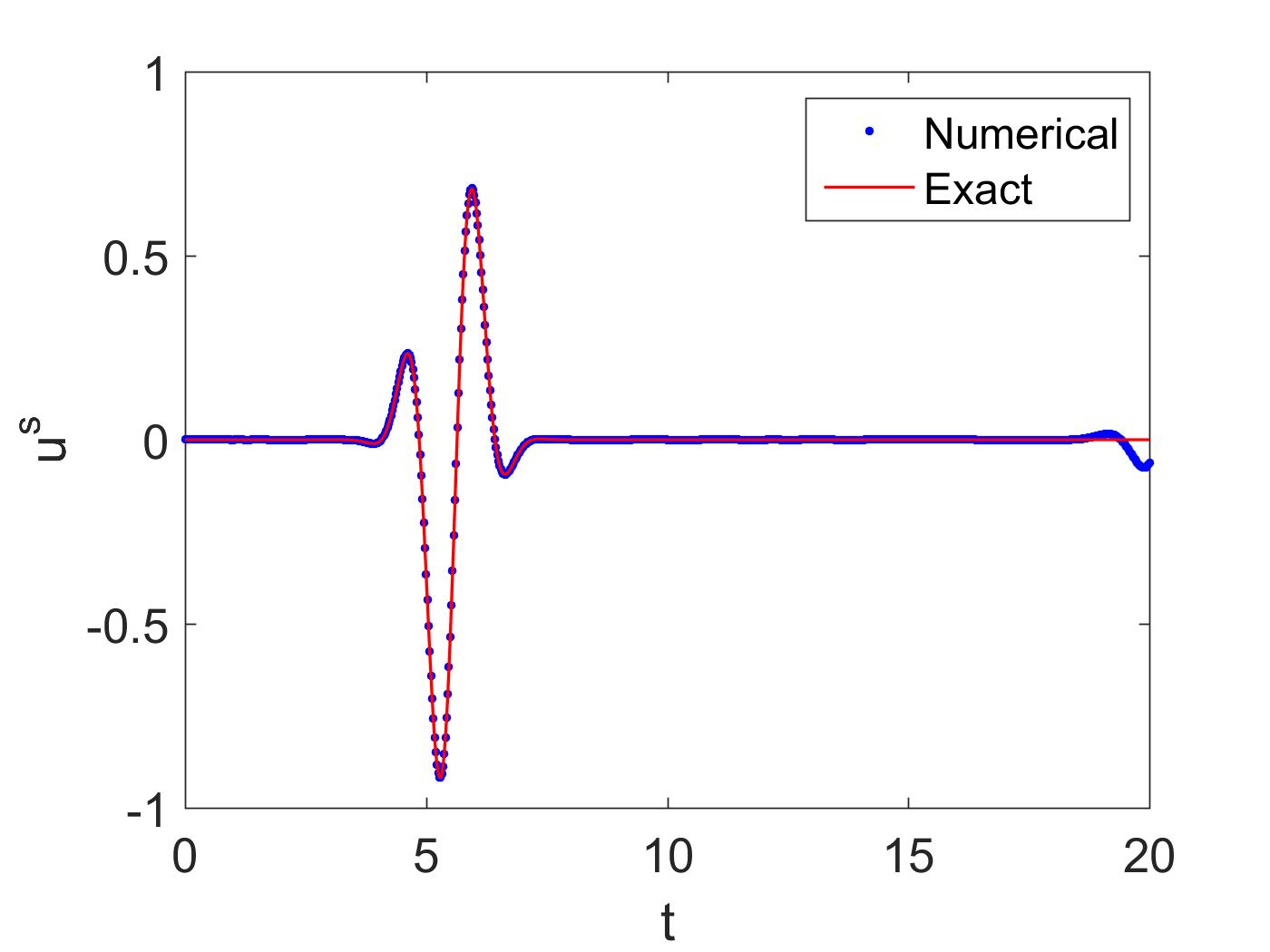} \\
(a) $M=3$ & (b) $M=6$ & $M=9$
\end{tabular}
\caption{Comparison of the numerical and exact solutions as functions of time $t$ for various values of $M$.}
\label{Example7.2}
\end{figure}

\section{Conclusions}
\label{sec:5}

This paper proposed a frequency-time hybrid integral-equation method
for the wave equation problem in an interior two-dimensional bounded
spatial domain. The solver is based on a novel ping-pong multiple
scattering strategy that reduces the original problem to a sequence of
problems of scattering by open-arcs. Exploiting the Huygens
principle, relying on a domain decomposition strategy based on use of
overlapping patches, and utilizing boundary integral equation
formulations for frequency-domain sub-problems and an efficient
Fourier transform algorithm, the proposed method produces the interior
time-domain solution efficiently and with high accuracy. An extension
of the ping-pong algorithm that incorporates arbitrary numbers of
overlapping subdomains should enable application of the method to
complex 2D and 3D geometries. The method can also be extended to
enable solution of elastic and electromagnetic wave
problems, and including problems of scattering by impenetrable
obstacles, \textcolor{r1}{problems of transmission for penetrable structures and problems in multi-layered media}. Such extensions, which lie beyond the scope of this paper, are left for future work.

\section*{Acknowledgments}
OB gratefully acknowledges support from NSF, DARPA and AFOSR through contracts
DMS-2109831, HR00111720035, FA9550-19-1-0173 and FA9550-21-1-0373,
and by the NSSEFF Vannevar Bush Fellowship under contract number
N00014-16-1-2808. TY gratefully
acknowledges support from NSFC through \textcolor{r1}{Grants No. 12171465 and 12288201}.

\appendix
\section{Appendix}
\label{sec:A}

\renewcommand{\theequation}{A.\arabic{equation}}
\renewcommand{\thetheorem}{A.\arabic{theorem}}

\textcolor{r1}{This appendix obtains the explicit expression
  (\ref{TDsol1}), used in the proof of Lemma~\ref{lemma_circle}, for
  the solution \textcolor{r1}{$v_+^s(x,t)$} of problem
  (\ref{waveeqn-ballTD}), where $g$ is a causal function ($g(x,t)=0$
  for $t\leq 0$) defined on $\mathbb{R}_0^2\times\mathbb{R}$, which
  satisfies the assumption~\eqref{DoI_ball_ass} for a bounded subset
  $\mathcal{C}^{\mathrm{inc}}\subset \mathbb{R}_0^2$. The construction
  utilizes the associated frequency-domain Green's function
  $G_\omega(x,y)$, which, for each $y\in\R^2_+$, is defined as the
  solution of the problem \ben
\begin{cases}
\Delta_x G_\omega(x,y)+\kappa^2G_\omega(x,y)=-\delta_y(x), & (x,\omega)\in\R^2_+\times\R, \ \kappa=\omega/c, \cr
G_\omega(x,y)= 0  & (x,\omega)\in\R_0\times\R.
\end{cases}
\enn
As is well known, the method of images yields
\be
\label{GreensFunc}
G_\omega(x,y)=\frac{i}{4}H_{0}^{(1)}(\kappa|x-y|)-
\frac{i}{4}H_{0}^{(1)}(\kappa|x'-y|),\quad x\ne y, \en where
$x'=(x_1,-x_2)$ denotes the image point of $x=(x_1,x_2)\in\R^2_+$ with
respect to $\R^2_0$, and where $H_0^{(1)}$ denotes the Hankel function
of first kind and order zero. Let now $V_+^s(x,\omega)$ denote the
Fourier transform of $v_+^s(x,t)$ with respect to $t$ for
$x\in\R^2_+$; clearly $V_+^s$ is a solution of the Helmholtz equation
with wavenumber $\kappa$ in $\R^2_+$, and with Dirichlet boundary
conditions $V_+^s=\widehat{g}$ on $\R^2_0$---where
$\widehat{g}(x,\omega)$ denotes the Fourier transform of $g(x,t)$ with
respect to~$t$. Clearly $\widehat{g}(x,\omega)$ vanishes for
$x\not\in \mathcal{C}^{\mathrm{inc}}$ since, in view
of~\eqref{DoI_ball_ass}, so does $g(x,t)$.  Use of Green's theorem together with the
Green function $G_\omega$ \cite{DM97} yields \be
\label{FDsol}
V_+^s(x,\omega)=\int_{\mathcal{C}^{\mathrm{inc}}}
\pa_{\nu_y}G_\omega(x,y)\widehat{g}(y,\omega)ds_y,\quad x\in\R^2_+,
\en where, denoting by $\nu_y=(0,1)^\top$ the unit upward normal on
$\R^2_0$, \ben
\textcolor{r1}{\pa_{\nu_y}G_\omega(x,y)=\frac{i\kappa}{2}\frac{x_2}{|x-y|}
  H_{1}^{(1)}(\kappa|x-y|).}  \enn It follows that, for $x\in \R^2_+$,
$v_+^s(x,t)$ equals the inverse Fourier transform of (\ref{FDsol}). To
proceed with the construction we introduce the following notations.}
\textcolor{r1}{We call $\R^3_0=\R^2_0\times \R$ the plane in three dimensional
  space with cross-section $\R^2_0$, we let
  $\breve{x}=(x^\top,0)^\top\notin\R^3_0$,
  $\breve{y}=(y^\top,z)^\top\in\R^3_0$ and $\breve{r}=\sqrt{r^2+z^2}$
  with $r=|x-y|=\sqrt{(x_1-y_1)^2+(x_2-y_2)^2}$. Using the unit normal
  $\nu_y=(0,1)^\top$ of $\R^2_0$, finally, the corresponding unit
  normal on $\R^3_0$ is denoted by $\nu_{\breve{y}}=(0,1,0)^\top$.}

In preparation for the main result of this appendix we establish the following Lemma.
\begin{lemma}
\label{rest_lemma}
The following formulas hold:
\be
\label{formula1}
&& \frac{1}{4\pi}\int_{-\infty}^\infty \frac{e^{i\kappa \breve{r}}}{ \breve{r}}dz= \frac{i}{4}H_{0}^{(1)}(\kappa r),\\
\label{formula2}
&& \frac{1}{4\pi}\int_{-\infty}^\infty \pa_{y_i}\left(\frac{e^{i\kappa \breve{r}}}{\breve{r}}\right)dz= \frac{i}{4} \pa_{y_i} H_{0}^{(1)}(\kappa r),\quad i=1,2.
\en
\end{lemma}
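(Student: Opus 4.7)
The plan is to reduce formula~(\ref{formula1}) to a classical integral representation of the Hankel function $H_0^{(1)}$ by means of a trigonometric-hyperbolic substitution, and then to obtain formula~(\ref{formula2}) by differentiation under the integral sign.

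For the first identity, I would make the substitution $z = r\sinh(t)$, so that $\breve{r}=\sqrt{r^2+z^2}=r\cosh(t)$ and $dz = r\cosh(t)\,dt$. The factor $\breve{r}$ in the denominator cancels exactly with the Jacobian factor, giving
\begin{equation*}
\frac{1}{4\pi}\int_{-\infty}^{\infty}\frac{e^{i\kappa\breve{r}}}{\breve{r}}\,dz \;=\; \frac{1}{4\pi}\int_{-\infty}^{\infty} e^{i\kappa r \cosh(t)}\,dt.
\end{equation*}
At this point I would invoke the standard integral representation
\begin{equation*}
H_0^{(1)}(\kappa r) \;=\; \frac{1}{i\pi}\int_{-\infty}^{\infty} e^{i\kappa r \cosh(t)}\,dt,
\end{equation*}
valid for $\kappa r > 0$ (and extended to real $\kappa r$ by the usual interpretation of the conditionally convergent integral, or by analytic continuation from $\mathrm{Im}(\kappa r)>0$). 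Substituting yields exactly $\frac{i}{4}H_0^{(1)}(\kappa r)$, which is the claim in~(\ref{formula1}).

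For the second identity, the strategy is to differentiate under the integral sign in (\ref{formula1}). To justify this, I would first compute
\begin{equation*}
\partial_{y_i}\!\left(\frac{e^{i\kappa\breve{r}}}{\breve{r}}\right) \;=\; -\frac{(x_i-y_i)\,e^{i\kappa\breve{r}}}{\breve{r}^{2}}\!\left(i\kappa-\frac{1}{\breve{r}}\right), \qquad i=1,2,
\end{equation*}
using $\partial_{y_i}\breve{r}=-(x_i-y_i)/\breve{r}$. Since $r>0$ is fixed and $|x_i-y_i|\le r$, for $|z|\to\infty$ this differentiated integrand is dominated uniformly (in a neighborhood of any fixed $y$ with $r>0$) by $C/|z|^2$, which is absolutely integrable on $\mathbb{R}$. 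The dominated convergence theorem then licenses the interchange of $\partial_{y_i}$ with $\int_{-\infty}^{\infty}dz$, so that formula~(\ref{formula2}) follows immediately by differentiating both sides of~(\ref{formula1}) with respect to~$y_i$.

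The main obstacle is the conditional-convergence nature of the integral in~(\ref{formula1}) for real $\kappa$: the integrand decays only like $1/|z|$ at infinity, so the identification with the Hankel integral representation must be handled with slight care. A clean way to handle this is to first establish both identities for $\kappa$ with a small positive imaginary part (where all integrals are absolutely convergent), and then pass to the limit $\mathrm{Im}(\kappa)\downarrow 0$, using that $H_0^{(1)}(\zeta)$ and its derivatives depend continuously on $\zeta$ for $\mathrm{Re}(\zeta)>0$. Once~(\ref{formula1}) is established, the differentiated integrand in~(\ref{formula2}) decays like $1/z^2$, so no such regularization is needed there and the differentiation step goes through directly.
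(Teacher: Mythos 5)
Your proof is correct, but it follows a genuinely different route from the paper's. For~(\ref{formula1}) the paper simply cites \cite[Lemma 3.1]{CF91}, whereas you reconstruct the classical argument via $z=r\sinh t$ and the $\cosh$-integral representation of $H_0^{(1)}$; this is self-contained and matches the cited result. The real divergence is in~(\ref{formula2}). The paper deliberately avoids differentiating under the integral sign: it instead proves the \emph{integrated} form of the identity, i.e.\ that $\int_0^{y_1}\!\int_1^\infty \pa_{y_1}(e^{i\kappa\breve{r}}/\breve{r})\,dz\,dy_1$ equals the difference of two $z$-integrals, and the obstacle it must overcome is that Fubini's theorem does not apply directly because the integrand decays only like $1/|z|$; it gains the missing decay by an integration by parts in $z$ using $e^{i\kappa\breve{r}}=\frac{\breve{r}}{i\kappa z}\pa_z e^{i\kappa\breve{r}}$, applies Fubini to the resulting $1/z^2$ term, and undoes the integration by parts. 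You instead differentiate under the integral sign directly, observing that $\pa_{y_i}(e^{i\kappa\breve{r}}/\breve{r})$ is bounded by $C/(1+z^2)$ uniformly for $y$ in a neighborhood with $r$ bounded away from zero, so the difference quotients are dominated and the interchange is legitimate; your fallback of first taking $\mathrm{Im}\,\kappa>0$ and passing to the limit handles the conditional convergence of the undifferentiated integral cleanly. Both arguments are sound and of comparable length; yours is arguably the more standard calculus-style justification, while the paper's sidesteps any appeal to differentiation under a conditionally convergent integral by working with the antiderivative identity. One small point worth making explicit in your write-up: the domination constant depends on a positive lower bound for $r=|x-y|$, so the interchange is justified only on the set $x\neq y$ (equivalently $r>0$), which is exactly where the lemma is applied.
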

\begin{proof}
The expression (\ref{formula1}) is established in \cite[Lemma 3.1]{CF91}. Using the notations
\ben
\breve{r}_0=(x_1^2+(x_2-y_2)^2+z^2)^{1/2},
\enn
to establish the $y_1$ component of~(\ref{formula2}) (the $y_2$ component follows analogously), it suffices to show that
\be
\label{formula3}
\int_{0}^{y_1}dy_1  \int_{1}^\infty \pa_{y_1} \left(\frac{e^{i\kappa \breve{r}}}{\breve{r}}\right) dz = \int_{1}^\infty \frac{e^{i\kappa \breve{r}}}{\breve{r}}dz- \int_{1}^\infty \frac{e^{i\kappa \breve{r}_0}}{\breve{r}_0}dz;
\en
the result then follows by differentiation with respect to $y_1$. To establish (\ref{formula3}), we seek to utilize \textcolor{r1}{Fubini's} Theorem on the left hand integral, but,  unfortunately, the integrand does not satisfy the hypothesis of Fubini's theorem: it is not an integrable function of the variable $(y_1,z)$. To address this difficulty we integrate by parts the left-hand integral: using the relation
\be
\label{formula4}
e^{i\kappa \breve{r}}=\frac{\breve{r}}{i\kappa z}\pa_z e^{i\kappa \breve{r}},
\en
 we obtain
\ben
\int_{1}^\infty \pa_{y_1} \left(\frac{e^{i\kappa \breve{r}}}{\breve{r}}\right) dz= \int_{1}^\infty \frac{1}{i\kappa z}\pa_z\pa_{y_1} e^{i\kappa \breve{r}} dz= -\frac{1}{i\kappa}\pa_{y_1} e^{i\kappa \breve{r}}\Big|_{z=1}+ \int_{1}^\infty\frac{1}{i\kappa z^2} \pa_{y_1} e^{i\kappa \breve{r}} dz.
\enn
\textcolor{r1}{Fubini's} Theorem can now be applied to \textcolor{r1}{the last integral}, and we thus obtain
\ben
\int_{0}^{y_1}dy_1  \int_{1}^\infty \pa_{y_1} \left(\frac{e^{i\kappa \breve{r}}}{\breve{r}}\right) dz &=& - \frac{1}{i\kappa}e^{i\kappa \breve{r}}\Big|_{z=1}+ \frac{1}{i\kappa}e^{i\kappa \breve{r}_0}\Big|_{z=1}+ \int_{0}^{y_1}dy_1 \int_{1}^\infty\frac{1}{i\kappa z^2} \pa_{y_1} e^{i\kappa \breve{r}} dz\\
&=& - \frac{1}{i\kappa}e^{i\kappa \breve{r}}\Big|_{z=1}+ \frac{1}{i\kappa}e^{i\kappa \breve{r}_0}\Big|_{z=1}+ \int_{1}^\infty\frac{1}{i\kappa z^2}  (e^{i\kappa \breve{r}}-e^{i\kappa \breve{r}_0}) dz.
\enn
Now, replacing $z^{-2} = - \pa_z z^{-1}$ and integrating by parts in the last integral,  and then using   (\ref{formula4}) once again, equation (\ref{formula3}) results, as desired. The proof is now complete.
\end{proof}

To establish (\ref{TDsol1}) we proceed as follows. In view of Lemma~\ref{rest_lemma} and equation (\ref{GreensFunc}), and noting that for \textcolor{r1}{$x\in\R^2_+, y\in\R^2_0$ we have $x_2-y_2\ne0$, the normal derivative of the Green function on the boundary $\R^2_0$ is given by
\ben
\pa_{\nu_y}G_\omega(x,y) &=& \frac{i\kappa}{2}\frac{x_2}{r} H_{1}^{(1)}(\kappa r)=\frac{i}{2} \pa_{y_2} H_{0}^{(1)}(\kappa r) =\frac{1}{2\pi}\int_{-\infty}^\infty \pa_{y_2}\left(\frac{e^{i\kappa \breve{r}}}{\breve{r}}\right)dz =-\frac{1}{2\pi} \int_{-\infty}^\infty \frac{i\kappa \breve{r}-1}{\breve{r}^3} x_2 e^{i\kappa \breve{r}}dz
\enn
and, therefore, equation (\ref{FDsol}) gives
\ben
V_+^s(x,\omega)= -\frac{1}{2\pi} \int_{\mathcal{C}^{\mathrm{inc}}\times \R} \frac{i\kappa \breve{r}-1}{\breve{r}^3} x_2e^{i\kappa \breve{r}}\, \widehat{g}(y,\omega)ds_{\breve{y}},\quad x\in\R^2_+.
\enn
Taking the inverse Fourier transform we obtain
\ben
v_+^s(x,t)&=& \frac{1}{2\pi} \int_{\mathcal{C}^{\mathrm{inc}}\times \R} \int_{0}^t \left[ \frac{1}{\breve{r}^3}\delta(t-\tau-c^{-1}\breve{r}) + \frac{1}{c\breve{r}^2} \delta'(t-\tau-c^{-1}\breve{r}) \right]x_2\, g(y,\tau)d\tau ds_{\breve{y}} \\
 &=& \frac{1}{2\pi} \int_{\mathcal{C}^{\mathrm{inc}}\times \R} \left[ \frac{1}{\breve{r}^3} g(y,t-c^{-1}\breve{r})+ \frac{1}{c\breve{r}^2} g^{(1)}(y,t-c^{-1}\breve{r})\right]x_2ds_{\breve{y}}.
\enn
where $g^{(1)}(x,t)=\frac{\pa g}{\pa t}(x,t)$. Using the relations $ds_{\breve{y}}=dzds_y$ we thus obtain
\ben
v_+^s(x,t)
&=& \frac{1}{\pi} \int_{\mathcal{C}^{\mathrm{inc}}} \int_{0}^{+\infty} \Big[(r^2+z^2)^{-3/2} g(y,t-c^{-1}\sqrt{r^2+z^2})\\
&&\quad\quad\quad +\frac{1}{c(r^2+z^2)} g^{(1)}(y,t-c^{-1}\sqrt{r^2+z^2}) \Big]x_2 dzds_y.
\enn
Utilizing the change of variables $\tau=t-c^{-1}\sqrt{r^2+z^2}$, or equivalently $z=c\sqrt{(t-\tau)^2-c^{-2}r^2}$, and $dz=-cz^{-1}\sqrt{r^2+z^2}d\tau$, we then obtain
\ben
(r^2+z^2)^{-3/2}dz=-\frac{1}{c^2(t-\tau)^2\sqrt{(t-\tau)^2-c^{-2}r^2}}d\tau
\enn
and
\ben
\frac{1}{c(r^2+z^2)}dz =-\frac{1}{c^2(t-\tau)\sqrt{(t-\tau)^2-c^{-2}r^2}}d\tau.
\enn
It then follows that, for $x\in\R^2_+$,
\begin{align}
\label{appendx_solrep}
v_+^s&(x,t)=\frac{1}{\pi c^2} \\
& \int_{\mathcal{C}^{\mathrm{inc}}}\int_{0}^{t-c^{-1}|x-y|} \left[\frac{x_2\, g(y,\tau)} {(t-\tau)^2\sqrt{(t-\tau)^2-c^{-2}|x-y|^2}} +\frac{x_2\,g^{(1)}(y,\tau)} {(t-\tau)\sqrt{(t-\tau)^2-c^{-2}|x-y|^2}} \right] d \tau ds_y,
\end{align}
as desired.}

\begin{remark}
\label{double_layer}
Although not used in this paper, it is worthwhile to note here that,
for an {\em arbitrary} two-dimensional Lipschitz curve \textcolor{r1}{$\Gamma$}, and
letting $\breve{\Gamma}=\Gamma\times \R$, the \textcolor{r1}{changes} of variables
used in this appendix can easily be utilized to obtain an expression
for the time-domain double-layer potential in two-dimensions, which
had heretofore not been successfully derived. Utilizing \textcolor{r1}{once again the
notations used} above, from the Kirchhoff formula~\cite[Eq. (22),
Sec. 8.1]{S41}, we know that the classical three-dimensional
time-domain double-layer potential is given by \ben
\mathcal{D}_{3D}(\phi)(\breve{x},t)= \frac{1}{4\pi}
\int_{\breve{\Gamma}} \left[\frac{\pa
    (\breve{r}^{-1})}{\pa\nu_{\breve{y}}}
  \phi(\breve{y},t-c^{-1}\breve{r})- \frac{1}{c\breve{r}} \frac{\pa
    \breve{r}}{\pa\nu_{\breve{y}}}
  \phi^{(1)}(\breve{y},t-c^{-1}\breve{r})\right]ds_{\breve{y}}.  \enn
The two-dimensional double-layer potential $\mathcal{D}_{2D}$ can then be obtained by assuming that the causal signal $\phi$ is independent
of $z$. Then using the change of variables
$\tau=t-c^{-1}\sqrt{r^2+z^2}$ we obtain \ben
&&\mathcal{D}_{2D}(\phi)(x,t)\\
&&= \frac{1}{2\pi c^2} \int_{\Gamma} \int_{0}^{t-c^{-1}|x-y|}
\left[\frac{(x-y)\cdot\nu_y\phi(y,\tau)}{(t-\tau)^2\sqrt{(t-\tau)^2-c^{-2}|x-y|^2}}
  +\frac{(x-y)\cdot\nu_y\phi^{(1)}(y,\tau)}{(t-\tau)\sqrt{(t-\tau)^2-c^{-2}|x-y|^2}}
\right] d\tau ds_y.  \enn which provides a correction to the
expression presented in \cite[Page 19]{S16}. The contributions
\cite[Sections 6.3-6.5]{BC39}, \cite[Page 42]{J64} and references
therein outline some of the difficulties previously encountered in
regard to the 2D double-layer potential.
\end{remark}

\end{document}